\title{Hexagonal Tiling of the Plane}
\author{Ze Zhu, Erxiao Wang\thanks{Corresponding author (wang.eric@zjnu.edu.cn).  Research was supported by National Natural Science Foundation of China NSFC-RGC 12361161603 and Key Projects of Zhejiang Natural Science Foundation LZ22A010003.}, 
Zhejiang Normal University \\
Min Yan\thanks{Research was supported by NSFC-RGC Joint Research Scheme N-HKUST607/23 and Hong Kong RGC General Research Fund 16310925.}, 
Hong Kong University of Science and Technology}
\newcommand{\hbullet}{\tikz \fill (0.1,0) arc (0:180:0.1);}
\newcommand*\circled[1]{\tikz[baseline=(char.base)]{
		\node[shape=circle,draw,inner sep=0.5pt] (char) {#1};}}
\newcommand{\pa}{\partial}
\newcommand{\mc}{\mathcal}
\newcommand{\bb}{\mathbb}
\newtheorem{theorem}{Theorem}
\newtheorem{lemma}[theorem]{Lemma}
\newtheorem*{theorem*}{Theorem}
\theoremstyle{definition}
\newtheorem*{definition*}{Definition}
\newtheorem*{case*}{Case}
\newtheorem*{subcase*}{Subcase}
\theoremstyle{remark}
\numberwithin{equation}{section}
\begin{document}

\maketitle

\begin{abstract}
Since the thesis of K. Reinhardt in 1918, it is well known that there are exactly three types of convex hexagons that can tile the plane. However, the proof of the fact is far from being complete. We prove this fact, under an assumption weaker than the convexity.
\end{abstract}

\section{Introduction}

The classification of the polygons that can tile the plane has a long history. If the polygon is convex, then it can only be triangle, quadrilateral, pentagon, or hexagon. Any triangle or quadrilateral (not necessarily convex) can tile the plane. More than a century ago, Reinhardt \cite{reinhardt} showed there are exactly three types of convex hexagons that can tile the plane. He also showed five types of pentagons that can tile the plane. The classification for convex pentagons is much more complicated, and is completed only recently by Mann, McLoud-Mann, von Derau \cite{mmd}, and Rao \cite{rao}. A history of the endeavour can be found in \cite{zong}.

If the polygon is not assumed to be convex, then there is no limit on the number of sides. A recent breakthrough in this regard is the discovery of the einstein, a polygon with thirteen sides that can tile the plane but only in non-periodic ways \cite{smkg}.

This paper goes back to the hexagon. After carefully examining all the proofs that classify convex hexagons that can tile the plane, we find that only some relatively simple cases were carefully written. Besides the proof by Reinhardt \cite{reinhardt} in 1918, the only other proof was by Bollobas  \cite{bollobas} in 1963. Both were far from being complete.

Reinhardt \cite{reinhardt} consists of two parts. The first part is a general analysis of the bahavior of a large part of a planar tiling, such that all the tiles have uniform upper bound for the diameters and uniform lower bound for the areas. Reinhardt then applied the analysis to tilings of the plane by congruent convex polygons. He found that the polygon must be triangle, quadrilateral, pentagon, or hexagon. Moreover, he concluded that a tiling of the plane by congruent convex hexagons has arbitrarily large part that is edge-to-edge, and all vertices have degree $3$. In Lemma \ref{basic}, under the assumption that every vertex is the meeting place of at least three tiles, which is much weaker than the convexity, we will give a direct proof of this property.

The second part is the determination of all the convex hexagons and some convex pentagons that can tile the plane. For hexagons, Reinhardt considered all eleven possible edge length combinations (distinct alphabets mean distinct edge lengths): $abcdef$, $a^2bcde$, $a^3bcd$, $a^2b^2cd$, $a^4bc$, $a^3b^2c$, $a^2b^2c^2$, $a^5b$, $a^4b^2$, $a^3b^3$, $a^6$. Then he studied three cases of the edge length arrangements (not just the combination, but also the adjacency relations) of the hexagon in a tiling:
\begin{enumerate}
\item $a,b,x,x,x,x$: If the hexagon has two adjacent edges with lengths $a,b$, such that $a$ is different from the other five edge lengths, and $b$ is different from the other five edge lengths, then the hexagon is type 1. See the first of Figure \ref{rthesis}.
\item $a,b,c,a,b,d$ or $a,b,c,a,d,b$: If the hexagon has two opposite edges of equal length $a$, and two non-adjacent edges $b$, and two edges $c,d$, such that $a,b,c,d$ are distinct, then the hexagon is either type 1 (for $a,b,c,a,b,d$), or type 2 (for $a,b,c,a,d,b$). See the second and third of Figure \ref{rthesis}.
\item $a,b,c,a,b,c$ or $a,b,c,a,c,b$: If the hexagon has two opposite edges of equal length $a$, and two pairs of non-adjacent edges $b$ and $c$, such that $a,b,c$ are distinct, then the hexagon is either type 1 (for $a,b,c,a,b,c$) or type 2 (for $a,b,c,a,c,b$). See the fourth and fifth of Figure \ref{rthesis}.
\end{enumerate}
The three types of hexagons are illustrated in Figure \ref{notation}.

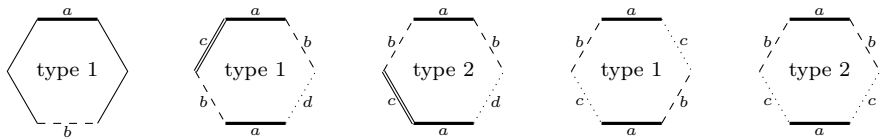
\begin{figure}[htp]
\centering
\begin{tikzpicture}[>=latex,scale=1]

\draw
	(-60:0.8) -- (0:0.8) -- (60:0.8)
	(-120:0.8) -- (180:0.8) -- (120:0.8);

\draw[line width=1.2]
	(120:0.8) -- (60:0.8);
\draw[dashed]
	(-120:0.8) -- (-60:0.8);

\node at (90:0.8) {\tiny $a$};
\node at (-90:0.8) {\tiny $b$};
\node at (0,0) {\scriptsize type 1};

\begin{scope}[xshift=2.5cm]

\draw[dashed]
	(60:0.8) -- (0:0.8)
	(180:0.8) -- (240:0.8);

\draw[line width=1.2]
	(120:0.8) -- (60:0.8)
	(-120:0.8) -- (-60:0.8);

\draw[dotted]
	(-60:0.8) -- (0:0.8);

\draw[double]
	(180:0.8) -- (120:0.8);

\node at (90:0.8) {\tiny $a$};
\node at (-90:0.8) {\tiny $a$};
\node at (30:0.8) {\tiny $b$};
\node at (150:0.8) {\tiny $c$};
\node at (-150:0.8) {\tiny $b$};
\node at (-30:0.8) {\tiny $d$};
\node at (0,0) {\scriptsize type 1};

\end{scope}

\begin{scope}[xshift=5cm]

\draw[dashed]
	(60:0.8) -- (0:0.8)
	(180:0.8) -- (120:0.8);

\draw[line width=1.2]
	(120:0.8) -- (60:0.8)
	(-120:0.8) -- (-60:0.8);

\draw[dotted]
	(-60:0.8) -- (0:0.8);

\draw[double]
	(180:0.8) -- (240:0.8);

\node at (90:0.8) {\tiny $a$};
\node at (-90:0.8) {\tiny $a$};
\node at (30:0.8) {\tiny $b$};
\node at (150:0.8) {\tiny $b$};
\node at (-150:0.8) {\tiny $c$};
\node at (-30:0.8) {\tiny $d$};
\node at (0,0) {\scriptsize type 2};

\end{scope}

\begin{scope}[xshift=7.5cm]

\draw[dotted]
	(60:0.8) -- (0:0.8)
	(180:0.8) -- (240:0.8);

\draw[line width=1.2]
	(120:0.8) -- (60:0.8)
	(-120:0.8) -- (-60:0.8);

\draw[dashed]
	(-60:0.8) -- (0:0.8)
	(180:0.8) -- (120:0.8);

\node at (90:0.8) {\tiny $a$};
\node at (-90:0.8) {\tiny $a$};
\node at (30:0.8) {\tiny $c$};
\node at (150:0.8) {\tiny $b$};
\node at (-150:0.8) {\tiny $c$};
\node at (-30:0.8) {\tiny $b$};
\node at (0,0) {\scriptsize type 1};

\end{scope}

\begin{scope}[xshift=10cm]

\draw[dashed]
	(60:0.8) -- (0:0.8)
	(180:0.8) -- (120:0.8);

\draw[line width=1.2]
	(120:0.8) -- (60:0.8)
	(-120:0.8) -- (-60:0.8);

\draw[dotted]
	(-60:0.8) -- (0:0.8)
	(180:0.8) -- (240:0.8);

\node at (90:0.8) {\tiny $a$};
\node at (-90:0.8) {\tiny $a$};
\node at (30:0.8) {\tiny $b$};
\node at (150:0.8) {\tiny $b$};
\node at (-150:0.8) {\tiny $c$};
\node at (-30:0.8) {\tiny $c$};
\node at (0,0) {\scriptsize type 2};

\end{scope}

\end{tikzpicture}
\caption{The hexagons that Reinhardt studied.}
\label{rthesis}
\end{figure}

We remark that $abcdef$, $a^2bcde$ and some subcases of the 11 cases are of the form $a,b,x,x,x,x$. More subcases are covered by the results above. However, it is quite clear that many more cases are not covered. In particular, Reinhardt did not write down any case that leads to the type 3 hexagon.

Bollobas \cite{bollobas} is independent of Reinhardt \cite{reinhardt}, because he says at the end of the paper that he did not know any hexagons other than types 1 and 2. In other words, he was not aware of type 3 hexagon. 

 Bollobas proved the square version of Lemma \ref{basic}, and the first three of the following results, and also claimed the last two results.
\begin{enumerate}
\item The hexagon cannot have six distinct edge lengths.
\item If at most two edges have equal lengths, then the hexagon is type 1.
\item The hexagon has three different corners, such that the sum of the angles of the three corners is $2\pi$. 
\item If there are only two corner combinations at all vertices in the tiling, then the hexagon is type 1 or type 2.
\item If the hexagon has two edges, such that each has distinct length from the other five, then the hexagon is type 1 or type 2. 
\end{enumerate}

Besides the works of Reinhardt and Bollobas, the only other work that contains tilings of the plane by congruent hexagons is by Heesch and Kienzle \cite{hk} in 1963. This is actually an engineering book that studied the most efficient and economical ways of cutting flat plates into identical pieces. Therefore the book was about periodic tilings by congruent polygons with edges  that are not necessarily straight. However, in the original problem of the hexagons that can tile the plane, the tiling is not assumed to be periodic.

Next, we explain three types of hexagons. 

We label the six corners of a hexagon by $i\in {\bb Z}_6$, as in the first of Figure \ref{notation}. We denote the angle at the corner $i$ by $[i]$. We also label the edge connecting corners $i$ and $i+1$ by $\bar{i}$, and denote the length of the edge by $|\bar{i}|$.

\begin{figure}[htp]
\centering
\begin{tikzpicture}[>=latex,scale=1]


\foreach \a in {0,...,5}
{
\draw[rotate=60*\a]
	(0:1.2) -- (60:1.2);

\node at (60*\a:1) {\footnotesize \a};
\node at (60*\a+30:0.85) {\footnotesize $\bar{\a}$};
}


\begin{scope}[xshift=3cm]

\draw
	(0:1.2) -- (60:1.2) -- (120:1.2)
	(180:1.2) -- (240:1.2) -- (300:1.2);

\draw[line width=1.2]
	(120:1.2) -- (180:1.2)
	(0:1.2) -- (-60:1.2);

\draw[->]
	(150:0.4) -- (150:1);
\draw[->]
	(-30:0.4) -- (-30:1);
	
\node at (0,0) {\small parallel};

\node at (0,-0.8) {$H_1$};

\end{scope}


\begin{scope}[xshift=6cm]

\draw
	(0:1.2) -- (60:1.2)
	(-60:1.2) -- (240:1.2);

\draw[line width=1.2]
	(240:1.2) -- (180:1.2)
	(60:1.2) -- (120:1.2);

\draw[dashed]
	(120:1.2) -- (180:1.2)
	(0:1.2) -- (-60:1.2);

\draw[->]
	(30:0.4) -- (0:1.1);
\draw[->]
	(150:0.4) -- (180:1.1);
\draw[->]
	(60:0.65) -- (60:1.1);
	
\node at (0,0.4) {\small $\Sigma=2\pi$};

\node at (0,-0.8) {$H_2$};

\end{scope}


\begin{scope}[xshift=9cm]

\draw[double]
	(0:1.2) -- (-60:1.2) -- (-120:1.2);

\draw[line width=1.2]
	(0:1.2) -- (60:1.2) -- (120:1.2);

\draw[dashed]
	(120:1.2) -- (180:1.2) -- (240:1.2);

\draw[->]
	(0:-0.3) -- (0:-1.1);
\draw[->]
	(120:-0.3) -- (120:-1.1);
\draw[->]
	(240:-0.3) -- (240:-1.1);
	
\node at (0,0) {\small $\frac{2}{3}\pi$};

\node at (0,-0.8) {$H_3$};

\end{scope}

\end{tikzpicture}
\caption{Labels for corners and edges, and Reinhardt hexagons.}
\label{notation}
\end{figure}
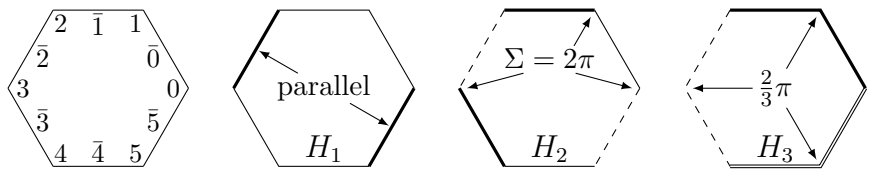

The three types of hexagons in Reinhardt \cite{reinhardt} are given by the following conditions:
\begin{itemize}
\item $H_1$: $[0]+[1]+[2]=[3]+[4]+[5]=2\pi$, $|\bar{2}|=|\bar{5}|$.
\item $H_2$: $[0]+[1]+[3]=[2]+[4]+[5]=2\pi$, $|\bar{1}|=|\bar{3}|$, $|\bar{2}|=|\bar{5}|$.
\item $H_3$: $[0]=[2]=[4]=\frac{2}{3}\pi$, $|\bar{0}|=|\bar{1}|$, $|\bar{2}|=|\bar{3}|$, $|\bar{4}|=|\bar{5}|$.
\end{itemize}
We call these {\em Reinhardt hexagons}. They are illustrated in the second, third, and fourth of Figure \ref{notation}. The thick, dashed, and double lines mean edges of the same lengths. The normal lines can have any lengths.

We will use $H_i$ to denote the type $i$ hexagon, as well as the linear system of equations satisfied by the hexagon.

\bigskip

\noindent {\bf Main Theorem.}
In a tiling of the plane by congruent hexagons, if every vertex is the meeting place of at least three tiles, then the hexagon is a Reinhardt hexagon.

\bigskip

The hexagon in the theorem is assumed to have straight edges, and is not assumed to be convex. A tiling by such hexagons has two types of vertices: {\em full} vertex $\bullet$ which is the end of all edges at the vertex, and {\em half} vertex $\hbullet$ which lies in the interior of exactly one edge (to be renamed side at the beginning of Section \ref{technical}) at the vertex. See the illustration of a quadrilateral tiling in Figure \ref{terms}. If the hexagon is convex, then the three tile assumption in the main theorem is satisfied. Therefore the theorem can be applied to convex hexagon.

\begin{figure}[htp]
\centering
\begin{tikzpicture}[>=latex,scale=1]

\draw[<->]
	(0,-0.15) -- node[fill=white] {\scriptsize side} ++(3.5,0);

\draw[<->]
	(0,0.15) -- node[fill=white] {\scriptsize edge} ++(2,0);
	
\draw[<->]
	(2,0.15) -- node[fill=white] {\scriptsize edge} ++(1.5,0);

\draw
	(0,0) -- (3.5,0) -- (4.5,1)
	(3,1) -- (2,1.5) -- (1,1) -- (0,1) -- (0,-1) -- (4.5,-1) -- (4.5,1) -- (3,1) -- (2,0) -- (1,1)
	(3.5,0) -- (3.5,-1);

\fill
	(3.5,0) circle (0.1)
	(2.1,0) arc (0:180:0.1);

\filldraw[fill=white]
	(2,1.5) circle (0.1);

\end{tikzpicture}
\label{terms}
\caption{Full vertex and half vertex.}
\end{figure}
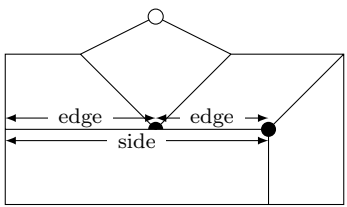

Concerning the specific cases studied by Reinhardt and Bollobas, we would like to mention the following result, which covers the first Reinhardt result and the fifth Bollobas result. We will prove the theorem in a separate paper. 

\begin{theorem*}
In an edge-to-edge tiling of the plane by congruent hexagons, suppose all vertices have degree $3$, and the hexagon has two edges, such that the length of each is different from the other five. Then the hexagon is type 1 or type 2, and the tiling is the union of the strips in Figure \ref{2edge}.
\end{theorem*}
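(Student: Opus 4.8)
My plan is to extract everything from the two distinguished edges together with the degree-$3$ condition. Write $\bar{p}$ and $\bar{q}$ for the two edges whose lengths $a$ and $b$ are each different from the other five. Since the tiling is edge-to-edge, each edge of a tile is glued to a single equal-length edge of a neighbour; as $a$ occurs only once in the hexagon, the copy of $\bar{p}$ in one tile can be glued only to the copy of $\bar{p}$ in the adjacent tile, and likewise for $\bar{q}$. Thus ``sharing an $a$-edge'' defines a perfect matching $M_a$ on the tiles and ``sharing a $b$-edge'' a perfect matching $M_b$. Two consequences are immediate. First, type $3$ is impossible, because in $H_3$ every edge lies in an equal-length pair, so no edge can have a length unlike the other five. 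Second, by the Main Theorem the hexagon is a Reinhardt hexagon, and with $H_3$ excluded it must be $H_1$ or $H_2$. What remains is to pin down the tiling, and the matchings are the right tool: each tile has exactly one $M_a$-partner and one $M_b$-partner, so $M_a\cup M_b$ gives every tile degree two and splits the tiles into disjoint chains that alternately share $a$- and $b$-edges. These chains are the prospective spines of the strips in Figure \ref{2edge}.

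Next I would fix the local picture along a unique edge. When $\bar{p}$ of a tile $T$ is glued to $\bar{p}$ of its partner $T'$, the two tiles are related either by the reflection in the line of $\bar{p}$ or by the half-turn about the midpoint of $\bar{p}$. Since every vertex has degree $3$, exactly three corner angles meet there and sum to $2\pi$: the reflection gluing then forces relations of the shape $2[p]+[k]=2\pi$ at the two endpoints, whereas the half-turn gluing forces $[p]+[p+1]+[k]=2\pi$. Imposing the analogous relations for $\bar{q}$, together with $[0]+\cdots+[5]=4\pi$ and the already-known type, determines the gluing type along each unique edge, and hence the two neighbours that a tile acquires through $\bar{p}$ and $\bar{q}$ (the cyclic position of $\bar{p}$ relative to $\bar{q}$ being what separates the $H_1$ situation from the $H_2$ one). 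In effect a tile together with its $M_a$- and $M_b$-partners becomes a rigid block of known shape.

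Finally I would globalise. The Reinhardt relations, in particular the equal pair $|\bar{2}|=|\bar{5}|$, make each chain advance by a fixed nontrivial translation along its length, so it is a straight, bi-infinite strip rather than a finite cycle, and consecutive tiles line up accordingly. The edges used by neither $M_a$ nor $M_b$ then glue each strip to its two neighbours, and matching lengths and angles should leave only the side-by-side stacking drawn in Figure \ref{2edge}. I expect the real difficulty to be exactly this globalisation --- showing that the locally forced blocks assemble in essentially one way, so that no exotic, non-strip tiling of the same hexagon is compatible with the two unique edges. This is the rigidity step that the arguments of Reinhardt and Bollobas left incomplete, and it is where the edge-to-edge and degree-$3$ hypotheses must be used globally rather than one vertex at a time.
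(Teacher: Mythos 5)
First, a point of comparison: the paper does not prove this theorem at all --- it states it in the introduction and explicitly defers the proof to a separate paper --- so there is no argument here to measure yours against; your proposal has to stand on its own. Its first half does stand: in an edge-to-edge tiling with all vertices of degree $3$, every vertex is the meeting place of exactly three tiles, so the Main Theorem applies, and since $H_3$ requires $|\bar{0}|=|\bar{1}|$, $|\bar{2}|=|\bar{3}|$, $|\bar{4}|=|\bar{5}|$ (so that no edge of an $H_3$ hexagon can have length different from the other five), the hexagon must be type 1 or type 2. There is no circularity in this, since the paper's proof of the Main Theorem never invokes the present theorem. But note what it costs: the paper remarks right after the statement that the theorem is valid for any \emph{part} of a tiling satisfying the edge-to-edge and degree-$3$ hypotheses, whereas your route through the Main Theorem (which rests on the asymptotic counting of Lemma \ref{basic}) only works for tilings of the whole plane, and it turns an intended elementary, local result into a corollary of the much harder global classification.

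The genuine gap is in the second half, and you name it yourself. Concretely, three steps are missing. (1) You assert that the vertex relations ($2[p]+[k]=2\pi$ for a reflection gluing versus $[p]+[p+1]+[k]=2\pi$ for a half-turn gluing, and their analogues for $\bar{q}$) ``determine the gluing type along each unique edge,'' but this case analysis is never carried out; worse, nothing you say forces \emph{all} $a$-edges of the tiling to be glued by the same isometry --- a priori some $a$-edges could be glued by reflection and others by half-turn --- so your chains need not be periodic, and the claim that each chain ``advances by a fixed nontrivial translation'' does not follow from what precedes it. (2) That claim is in any case false as stated in the type-2 situation: in Figure \ref{2edge} consecutive tiles of a strip are mirror images, so the recurring motion is a glide reflection, not a translation; and in general a composition of gluing isometries can be a rotation, which is exactly how a finite closed ring of tiles (a cycle of $M_a\cup M_b$) would arise --- you never exclude this. (3) The final assembly step, ``matching lengths and angles \emph{should} leave only the side-by-side stacking,'' is precisely the rigidity statement the theorem asserts, and you leave it as a conjecture. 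In short, the combinatorial skeleton (the perfect matchings $M_a$, $M_b$ and the alternating chains) is sound and is a reasonable way to begin, but every geometric step that would convert it into the strip decomposition of Figure \ref{2edge} is absent, so this is a strategy outline rather than a proof.
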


The theorem is also valid for any part of the hexagonal tiling of the plane that satisfies the edge-to-edge and degree $3$ assumption. Lemma \ref{basic} says that, under the assumption of the main theorem, there are arbitrarily large parts of the tiling that satisfy the assumption.

\begin{figure}[htp]
\centering
\begin{tikzpicture}[>=latex,scale=1]

\pgfmathsetmacro{\ra}{(3*sqrt(3)/10)};


\foreach \a in {0,...,3}
{
\begin{scope}[xshift=2*\ra*\a cm]

\draw[xshift=\ra cm, yshift=0.9 cm]
	(-30:0.6) -- (30:0.6) -- (90:0.6) -- (150:0.6) -- (210:0.6); 

\draw
	(30:0.6) -- (-30:0.6) -- (-90:0.6) -- (210:0.6) -- (150:0.6);

\draw[line width=1.2]
	(30:0.6) -- (90:0.6);

\draw[dashed]
	(150:0.6) -- (90:0.6)
	(30:0.6) -- (30:1.2);

\foreach \b in {0,...,5}
{
\node at (60*\b+30:0.45) {\tiny \b};
\node[xshift=\ra cm, yshift=0.9 cm] at (60*\b+210:0.45) {\tiny \b};
}

\begin{scope}[yshift=-3cm]

\draw[xshift=\ra cm, yshift=0.9 cm]
	(30:0.6) -- (90:0.6) -- (150:0.6); 

\draw
	(150:0.6) -- (90:0.6) -- (30:0.6) -- (30:1.2)
	(-30:0.6) -- (-90:0.6) -- (210:0.6);
	
\end{scope}

\end{scope}
}

\begin{scope}[yshift=-3cm]

\foreach \a in {0,1}
{
\begin{scope}[xshift=4*\a*\ra cm]

\draw[line width=1.2, xshift=\ra cm, yshift=0.9 cm]
	(30:0.6) -- (-30:0.6);

\draw[dashed]
	(30:0.6) -- (-30:0.6);

\foreach \u in {0,...,5}
\node at (60*\u+150:0.45) {\tiny \u};

\foreach \u in {0,...,5}
\node[xshift=2*\ra cm] at (60*\u-30:0.45) {\tiny \u};

\foreach \u in {0,...,5}
\node[xshift=3*\ra cm, yshift=0.9 cm] at (60*\u+150:0.45) {\tiny \u};

\foreach \u in {0,...,5}
\node[xshift=\ra cm, yshift=0.9 cm] at (60*\u-30:0.45) {\tiny \u};
	
\end{scope}
}

\foreach \a in {0,1,2}
{
\begin{scope}[xshift=4*\a*\ra cm]

\draw[dashed, xshift=-\ra cm, yshift=0.9 cm]
	(30:0.6) -- (-30:0.6);

\draw[line width=1.2]
	(150:0.6) -- (210:0.6);
		
\end{scope}
}

\end{scope}


\begin{scope}[shift={(7cm,0.72 cm)}]


\foreach \x in {0,1}
{
\begin{scope}[xshift=1.8*\x cm]

\foreach \a/\b/\c in {60/0.6/1, -60/0.6/1, 180/0.6/-1, -120/1.2/-1}
{
\begin{scope}[shift={(\a:\b)}, yscale=\c, xscale=\c]

\draw
	(0:0.6) -- (60:0.6) -- (120:0.6) -- (180:0.6)
	(-60:0.6) -- (-120:0.6);

\draw[line width=1.2]
	(-60:0.6) -- (0:0.6);

\foreach \a in {0,...,5}
\node at (60*\a-120:0.45) {\tiny \a};

\end{scope}
}

\draw[dashed]
	(0,0) -- (120:0.6)
	(240:0.6) -- ++(-60:0.6);

\end{scope}
}


\begin{scope}[yshift=-3 cm]

\foreach \x in {0,1}
{
\begin{scope}[xshift=1.8*\x cm]

\foreach \a/\b/\c in {60/0.6/1, -60/0.6/1, 180/0.6/-1, -120/1.2/-1}
\draw[shift={(\a:\b)}, yscale=\c]
	(0:0.6) -- (60:0.6) -- (120:0.6) -- (180:0.6)
	(-60:0.6) -- (-120:0.6);

\draw[line width=1.2]
	(240:0.6) -- ++(-60:0.6);

\draw[dashed]
	(0,0) -- (120:0.6);	

\foreach \a in {0,...,5}
{
\node[shift={(180:0.6)}] at (60*\a+60:0.45) {\tiny \a};
\node[shift={(60:0.6)}] at (60*\a-120:0.45) {\tiny \a};
\node[shift={(-60:0.6)}] at (-60*\a-60:0.45) {\tiny \a};
\node[shift={(240:1.2)}] at (-60*\a+120:0.45) {\tiny \a};
}

\end{scope}
}

\foreach \x in {0,1,2}
{
\begin{scope}[xshift=1.8*\x cm]

\draw[line width=1.2]
	(180:1.2) -- ++(60:0.6);

\draw[shift={(210:{0.6*sqrt(3)})}, dashed]
	(0,0) -- (-120:0.6);	
	
\end{scope}
}

\end{scope}

\end{scope}

\end{tikzpicture}
\caption{Hexagonal tiling in case of two distinct edges.}
\label{2edge}
\end{figure}
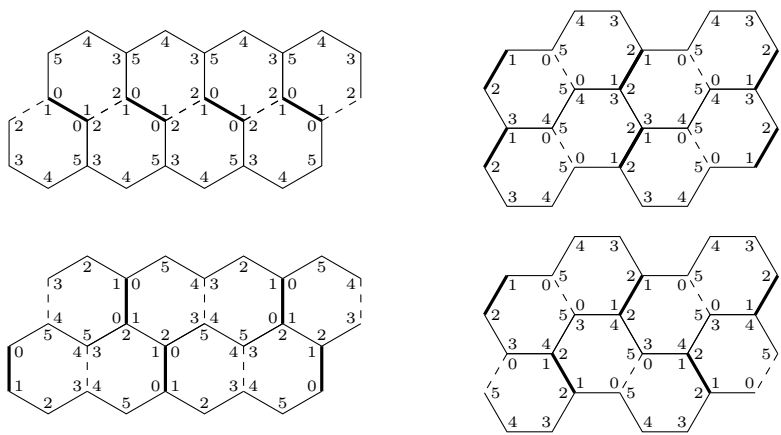

The paper is organised as follows. In Section \ref{technical}, we prove some technical results needed for the proof of the main theorem. The main consequence of Lemma \ref{basic} is that the whole hexagonal tiling of the plane contains the partial tiling in Figure \ref{geom3}, which consists of the center tile labeled 1, the first layer of tiles labeled 2 to 7, and the second layer of tiles labeled 8 to 19. We denote the center tile and the first layer by ${\mc D}_1$, and denote all the tiles in Figure \ref{geom3} by ${\mc D}_2$. The rest of the paper is based on this consequence.

\begin{figure}[htp]
\centering
\begin{tikzpicture}[>=latex,scale=1]

\foreach \a in {0,...,5}
\foreach \b in {0,...,5}
\foreach \c/\d in {0/1.5, 30/0.866, 30/1.732}
\draw[rotate=60*\b, shift={(\c:\d)},rotate=60*\a]
	(60:0.5) -- (0:0.5);

\node at (0,0) {1};

\foreach \a in {2,...,7}
\node at (-90+60*\a:0.866) {\a};

\foreach \a in {8,10,12,14,16,18}
\node at (120+30*\a:1.5) {\a};

\foreach \a in {9,11,13,15,17,19}
\node at (120+30*\a:1.732) {\a};

\end{tikzpicture}
\caption{Two layers ${\mc D}_1,{\mc D}_2$ of tiles.}
\label{geom3}
\end{figure} 

In Section \ref{proof}, we prove the main theorem. The idea is that any ${\mc D}_1$-tiling introduces equalities: Any edge shared by adjacent tiles gives an equality between two edges of the hexagon, and the sum of three angles at any vertex is $2\pi$. If the system of equalities implies (up to circular permutation or flipping of corner labels) the system $H_i$, $i=1,2,3$, then the hexagon is one of the three types. Sometimes it is not so obvious that the system implies $H_i$, and some additional geometric argument (using some lemmas in Section \ref{technical}) or calculation can still conclude $H_i$. After dismissing all the systems that can imply Reinhardt hexagons, there remains to be seven types of hexagons, which we denote by $H_{7.1},H_{8.1},H_{8.2},H_{8.3},H_{8.4},H_{9.1},H_{9.2}$. These may not be Reinhardt hexagons, and yet can tile ${\mc D}_1$ in 33 possible ways. 

To show that the seven types of hexagons are not suitable for tiling the plane, we use the fact that the 33 possible ${\mc D}_1$-tilings can be further extended to ${\mc D}_2$-tilings. This imposes some extra geometrical conditions along the boundaries of the ${\mc D}_1$-tilings. We use the extra conditions to conclude that the hexagons either cannot tile ${\mc D}_2$, or must be reduced to the regular hexagon. 

In Section \ref{comment}, we make some comments. We discuss possible alternative ways of dismissing the remaining seven types of hexagons. This is related to a special version of the Heesch number for hexagons.

\section{Technical Preparation}
\label{technical}

The main theorem assumes a tiling of the plane by congruent hexagons, such that every vertex is the meeting place of at least three tiles. This is the overall assumption in this and subsequent sections.

Let ${\mc T}$ be a tiling satisfying the assumption. We emphasise that the hexagon may not be convex, and the tiling may not be edge-to-edge.

Let $D_R$ be a disk of radius $R>0$. Let ${\mc T}(D_R)$ be the smallest part of ${\mc T}$ that tiles a simply connected domain $\hat{D}_R$ (i.e., homeomorphic to disk), such that all tiles that touch $D_R$ are included. We may construct ${\mc T}(D_R)$ by first taking all the tiles that touch $D_R$. The union of these tiles is a topological disk minus some possible holes. These holes are the unions of some tiles. We add these tiles to form ${\mc T}(D_R)$.

Since the tiling may not be edge-to-edge, and has the interior and boundary parts, we need to fix some terminologies. First, all tiles have {\em sides}, and a hexagon has six sides. The space between adjacent sides are {\em corners}, and a hexagon has six corners. The half vertices in the interior of a side divides the side into {\em edges}. For example, the half vertex in Figure \ref{terms} divides one side of the lower left tile into two edges. 

Any tiling can be regarded as an edge-to-edge tiling, by taking all half vertices to be full vertices, and all edges to be sides. The difference is that an $n$-gonal tile $T$ becomes an $(n+h(T))$-tile. Here $h(T)$ is the number of half vertices in the interiors of the sides of $T$. For example, the quadrilateral tile in the lower left of in Figure \ref{terms} has $h(T)=1$ and becomes a pentagon in the new viewpoint.

In an edge-to-edge tiling, sides and edges are the same. The degree of a vertex is the number of edges at the vertex. If the vertex is interior, then the degree is the same as the number of corners at the vertex. If the vertex is on the boundary, then the degree is one more than the number of corners at the vertex. For example, the vertex $\circ$ in Figure \ref{terms} has one corner, and the degree is two.

If the tiling is not edge-to-edge, then the degree of a vertex is more complicated. The degree of the half vertex $\hbullet$ in Figure \ref{terms} should be $3$, the number of corners at the vertex. Fortunately, the definition is not used in this paper.

\begin{lemma}\label{basic}
Suppose a tiling of the plane by congruent hexagons has the property that every vertex is the meeting place of at least three tiles. Then for any $R>0$, there is a disk $D_R$ of radius $R$, such that the interior of ${\mc T}(D_R)$ is edge-to-edge, and all interior vertices have degree $3$.
\end{lemma}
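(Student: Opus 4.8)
The plan is to run a discrete Gauss--Bonnet (Euler) count on large patches and to show that the total combinatorial ``defect'' of a patch grows only linearly in its radius, whereas a failure of the lemma would force the defects to grow quadratically. First I would fix the bookkeeping. Promote every half vertex to a full vertex and every edge to a side, so the tiling becomes edge-to-edge and each original hexagon carrying $h$ half vertices on its sides becomes a $(6+h)$-gon. Call a vertex a \emph{defect} if it is a half vertex, or a full vertex at which more than three tiles meet. The hypothesis that every vertex meets at least three tiles means that in this promoted picture every vertex has degree $\ge 3$ and every tile has at least $6$ sides. Let $\Sigma\subset\mathbb{R}^2$ denote the set of defects; the conclusion of the lemma is exactly that some $\hat D_R$ has no defect in its interior.

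The core estimate is the Euler identity. For large $\rho$ I would apply $V-E+F=2$ to the topological disk $\hat D_\rho$ (counting the outer face), and combine the handshake relations $\sum_v\deg v=2E$ and $\sum_f s_f=2E$, where $s_f$ is the number of sides of the face $f$ and the outer face has $s_0$ sides (the number of boundary edges). This yields $2\sum_v(3-\deg v)+\sum_f(6-s_f)=12$. Separating interior from boundary contributions gives $2\sum_{\text{int }v}(3-\deg v)+\sum_{\text{tiles}}(6-s_j)=12-2\sum_{\text{bdy }v}(3-\deg v)-(6-s_0)$. By the degree and side bounds above, every summand on the left is $\le 0$, so the left side equals $-T$ for a nonnegative quantity $T$ that dominates the number of interior defects (each interior half vertex forces some tile to have $s_j\ge 7$, and each interior full vertex of degree $>3$ contributes at least $2$). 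Since the tiles are congruent, of diameter $\le d$ and area $A_0>0$, the number of boundary tiles, and hence $s_0$ and the number of boundary vertices, are all $O(\rho)$, and each boundary term is $O(1)$; therefore $T=O(\rho)$, which gives $\#(\Sigma\cap D_\rho)=O(\rho)$.

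The final step is the density argument. Suppose the lemma fails for some $R$. Then for every centre $p$ the patch ${\mc T}(D_R(p))$ contains a defect in its interior, and since $\hat D_R(p)\subseteq D_{R+2d}(p)$ we get $\Sigma\cap D_{R+2d}(p)\neq\emptyset$ for every $p$. Thus $\Sigma$ is $(R+2d)$-dense, so covering $D_\rho$ by the radius-$(R+2d)$ disks centred at the points of $\Sigma$ forces $\#(\Sigma\cap D_\rho)\ge c\,\rho^2$ for large $\rho$, contradicting the linear bound of the previous paragraph. Hence some $D_R$ yields a patch whose interior is edge-to-edge with all interior vertices of degree $3$.

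I expect the main obstacle to be the careful accounting in the core estimate: attributing the side-count excess $\sum(6-s_j)$ and the degree excess correctly to genuine interior defects rather than boundary artefacts, and verifying that the hypothesis ``every vertex meets at least three tiles'' is precisely what makes every term on the left nonpositive, so that interior defects cannot cancel one another and each contributes at least $1$ to $T$. The geometric inputs (bounded diameter and fixed positive area of the congruent tiles) that convert the boundary terms into an $O(\rho)$ bound are routine but must be recorded explicitly.
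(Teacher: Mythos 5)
Your proposal is correct and takes essentially the same route as the paper: both promote the tiling to an edge-to-edge one, run an Euler-formula/handshake count to show that the defects (half vertices and vertices of degree $\ge 4$) in a radius-$\rho$ disk number only $O(\rho)$, and conclude by noting that failure of the lemma would make defects dense and hence quadratically numerous. The only differences are bookkeeping --- you use $V-E+F=2$ with the outer face and phrase the count as a discrete Gauss--Bonnet identity, while the paper uses $V-E+F=1$ and isolates the inequality $v_2 \ge v_{\ge 4}+\tfrac{1}{2}h_R$ with $v_2=O(R)$ --- and that you write out explicitly the final density/packing argument that the paper compresses into ``the claim of the lemma is a consequence of this fact.''
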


\begin{proof}
Let ${\mc T}_E(D_R)$ be the tiling ${\mc T}(D_R)$ regarded as an edge-to-edge tiling. Let $v_R,e_R,f_R$ be the numbers of vertices, edges and tiles in ${\mc T}_E(D_R)$. Since $\hat{D}_R$ is a disk, we have the Euler equality
\begin{equation}\label{euler1}
v_R-e_R+f_R=1.
\end{equation}

Let $v_i$ be the number of degree $i$ vertices in ${\mc T}_E(D_R)$. Then $v_1=0$, and
\[
v_R=v_2+v_3+v_4+\cdots.
\]
The assumption of at least three tiles at any vertex implies that degree $2$ vertices lie in the boundary $\pa\hat{D}_R$ of $\hat{D}_R$. Moreover, we have
\begin{align}
v_2+2e_R
&=v_2+\sum_{\text{vertices $V$ of }{\mc T}_E(D_R)}\deg V \nonumber \\
&=v_2+(2v_2+3v_3+4v_4+\cdots) \nonumber \\
&=3v_R+(v_4+2v_5+3v_6+\cdots) \nonumber \\
&\ge 3v_R+v_{\ge 4}. \label{euler2}
\end{align}
Here $v_{\ge 4}=v_4+v_5+v_6+\cdots$ is the number of vertices of degree $\ge 4$.

On the other hand, we have $e_R=e_b+e_i$, where $e_b$ is the number of edges in the boundary $\pa\hat{D}_R$, and $e_i$ is the number of interior edges. Since a tile $T$ in ${\mc T}(D_R)$ becomes a $(6+h(T))$-gon in ${\mc T}_E(D_R)$, we get
\[
2e_R
\ge e_b+2e_i
=\sum_{\text{tiles $T$ of }{\mc T}(D_R)}(6+h(T)) 
=6f_R+\sum_{\text{tiles $T$ of }{\mc T}(D_R)}h(T).
\]
Since each half vertex lies in the interior of a side of exactly one $T$ in ${\mc T}(D_R)$, we know each half vertex is counted once in $h(T)$ for a unique $T$. Therefore the number $h_R$ of half vertices in the interior of ${\mc T}(D_R)$ satisfies
\[
\sum_{\text{tiles $T$ of }{\mc T}(D_R)}h(T)\ge h_R.
\]
Here the left side subtracting the right side is the number of half vertices in the boundary $\pa\hat{D}_R$. Then we get
\begin{equation}\label{euler3}
2e_R\ge 6f_R+\sum_{\text{tiles $T$ of }{\mc T}(D_R)}h(T)
\ge 6f_R+h_R.
\end{equation}

Adding \eqref{euler2} and half of \eqref{euler3} together, we get
\[
v_2+3e_R\ge 3v_R+v_{\ge 4}+3f_R+\frac{1}{2}h_R.
\]
Then by the Euler formula \eqref{euler1}, we get 
\begin{equation}\label{euler4}
v_2\ge 3+v_{\ge 4}+\frac{1}{2}h_R
>v_{\ge 4}+\frac{1}{2}h_R.
\end{equation}

Let $d$ and $a$ be the diameter and the area of the hexagon. Since degree $2$ vertices lie in the boundary $\pa\hat{D}_R$, they are the corners of tiles within the distance  $d$ of the boundary $\pa D_R$. Therefore these tiles are contained in the annulus $D_{R+d}-D_{R-d}$, and the number of such tiles is $\le\frac{\text{Area}(D_{R+d}-D_{R-d})}{a}=\frac{4\pi dR}{a}$. Since each hexagonal tile has six corners, this implies  
\[
v_2\le 6\frac{4\pi dR}{a}
=\frac{24\pi dR}{a}.
\]
Combined with \eqref{euler4}, we get the comparison of number of vertices with the area of the disk $D_R$
\[
\frac{v_{\ge 4}}{\pi R^2}+\frac{h_R}{\pi R^2}
\le \frac{v_2}{\pi R^2}
=\frac{24d}{aR}.
\]
This implies
\[
\lim_{R\to\infty}\frac{v_{\ge 4}}{\pi R^2}=0,\quad
\lim_{R\to\infty}\frac{h_R}{\pi R^2}=0.
\]
This means full vertices of degree $\ge 4$ and half vertices become more and more scarce as $R$ becomes larger. The claim of the lemma is a consequence of this fact.
\end{proof}

A consequence of Lemma \ref{basic} is that any tiling of the plane by congruent hexagons contains the ${\mc D}_2$-tiling in Figure \ref{geom3}, provided the three tile assumption in the main theorem is satisfied. In fact, the lemma also implies the tiling contains the similar ${\mc D}_k$-tiling  consisting of $k$ layers, for any $k$. Our proof of the main theorem in Section \ref{proof} only uses the ${\mc D}_2$-tiling.

\begin{lemma}\label{geom1}
If a hexagon satisfies 
\[
|\bar{0}|=|\bar{1}|,\quad
|\bar{3}|=|\bar{4}|,\quad
[0]=[2],\quad
[3]=[5],
\]
then $|\bar{2}|=|\bar{5}|$, and the hexagon is symmetric with respect to the line connecting the corners $1$ and $4$ (see the right of Figure \ref{geom1A}).
\end{lemma}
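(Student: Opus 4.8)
The plan is to peel off two isosceles ``ears'' of the hexagon and reduce the statement to recognising an isosceles trapezoid. Write $P_i$ for the location of corner $i$. The hypothesis $|\bar{0}|=|\bar{1}|$ says the triangle $P_0P_1P_2$ is isosceles with apex $P_1$, so its base angles are $\angle P_2P_0P_1=\angle P_0P_2P_1=\tfrac12(\pi-[1])=:\alpha$; likewise $|\bar{3}|=|\bar{4}|$ makes $P_3P_4P_5$ isosceles with apex $P_4$ and base angles $\angle P_5P_3P_4=\angle P_3P_5P_4=\tfrac12(\pi-[4])=:\beta$.

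Next I would cut the hexagon along the diagonals $P_0P_2$ and $P_3P_5$, removing the two ears and leaving the quadrilateral $Q=P_0P_2P_3P_5$. Subtracting each ear's base angle from the corresponding corner angle gives the interior angles of $Q$ as $[0]-\alpha,\ [2]-\alpha,\ [3]-\beta,\ [5]-\beta$ at $P_0,P_2,P_3,P_5$ respectively. The hypotheses $[0]=[2]$ and $[3]=[5]$ then collapse these into the pattern $\gamma,\gamma,\delta,\delta$, where $\gamma:=[0]-\alpha$ and $\delta:=[3]-\beta$, and the angle sum $2\pi$ of a simple quadrilateral forces $\gamma+\delta=\pi$.

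The key step is to read off that $Q$ is an isosceles trapezoid. Since $\gamma+\delta=\pi$, the same-side interior angles on the leg $P_5P_0$ force $P_0P_2\parallel P_3P_5$, and the equal base angles $\gamma$ at $P_0,P_2$ then make the legs equal, i.e. $|P_2P_3|=|P_5P_0|$, which is exactly $|\bar{2}|=|\bar{5}|$. The axis of this trapezoid is the common perpendicular bisector $\ell$ of the parallel sides $P_0P_2$ and $P_3P_5$. Because $P_1$ is equidistant from $P_0,P_2$ and $P_4$ is equidistant from $P_3,P_5$, both apices lie on $\ell$, so $\ell$ is precisely the line through corners $1$ and $4$. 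The reflection across $\ell$ fixes $P_1,P_4$, swaps $P_0\leftrightarrow P_2$ and $P_3\leftrightarrow P_5$, hence carries each ear and $Q$ onto itself, and therefore maps the whole hexagon onto itself — establishing the claimed symmetry (and re-deriving $|\bar{2}|=|\bar{5}|$, since $\bar{2}$ is sent to $\bar{5}$).

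The step I expect to be the main obstacle is the angle bookkeeping when the hexagon is not convex, which the setting explicitly allows. If a diagonal such as $P_0P_2$ fails to lie inside the corner angle $[0]$, the clean subtraction $[0]-\alpha$ must be replaced by its correct signed (directed-angle) version, and one must separately verify that $Q$ is still a simple quadrilateral so that its angles genuinely sum to $2\pi$. Dispatching these configurations carefully — rather than the trapezoid recognition, which is elementary — is where the real work lies.
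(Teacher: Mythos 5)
Your proof is essentially identical to the paper's: both cut off the two isosceles ears along the diagonals $02$ and $35$, use the equal corner angles together with the equal base angles of the ears to get the angle pattern $\gamma,\gamma,\delta,\delta$ in the quadrilateral $\square 0235$, deduce $\gamma+\delta=\pi$ from the quadrilateral angle sum, and conclude that $\square 0235$ is an isosceles trapezoid whose axis through corners $1$ and $4$ is the symmetry line of the whole hexagon. The non-convexity caveat you flag at the end is a genuine subtlety, but the paper's own proof silently makes the same implicit assumptions (diagonals interior to the corner angles, $\square 0235$ simple), so your argument matches it in both substance and level of rigor.
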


\begin{proof}
We connect corners $0$ and $2$ by a dotted line, and connect corners $3$ and $5$ by a dotted line. By $|\bar{0}|=|\bar{1}|$ and $|\bar{3}|=|\bar{4}|$, the triangles $\triangle 012$ and $\triangle 345$ are isosceles triangles. Then by $[0]=[2]$ and $[3]=[5]$, this implies equal angles $\lambda$ and equal angles $\mu$ between dotted lines and normal lines. Since the sum of  four angles $\lambda,\lambda,\mu,\mu$ of the quadrilateral $\square 0235$ is $2\pi$, we get $\lambda+\mu=\pi$. This implies the two dotted lines are parallel. Then the quadrilateral $\square 0235$ is a symmetric trapezoid, and the whole hexagon is symmetric.
\end{proof}

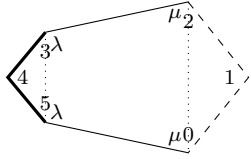
\begin{figure}[htp]
\centering
\begin{tikzpicture}[>=latex,scale=1]

\draw[line width=1.2]
	(0,-0.6) -- (-0.5,0) -- (0,0.6);

\draw
	(0,0.6) -- ++(12:1.95)
	(0,-0.6) -- ++(-12:1.95);

\draw[dashed]
	(1.9,1) -- (2.7,0) -- (1.9,-1);

\draw[dotted]
	(0,-0.6) -- (0,0.6)
	(1.9,1) -- (1.9,-1);
	
\node at (1.9,-0.75) {\scriptsize 0};
\node at (2.45,0) {\scriptsize 1};
\node at (1.9,0.75) {\scriptsize 2};
\node at (0,0.35) {\scriptsize 3};
\node at (-0.3,0) {\scriptsize 4};
\node at (0,-0.35) {\scriptsize 5};

\node at (0.15,-0.45) {\scriptsize $\lambda$};
\node at (0.15,0.45) {\scriptsize $\lambda$};

\node at (1.72,0.82) {\scriptsize $\mu$};
\node at (1.72,-0.82) {\scriptsize $\mu$};

\end{tikzpicture}
\caption{A geometric condition for symmetric hexagon.}
\label{geom1A}
\end{figure}

\begin{lemma}\label{lem52}
Suppose $|\bar{0}|=|\bar{3}|$, $|\bar{1}|=|\bar{4}|$, $|\bar{2}|=|\bar{5}|$. Then the following are equivalent:
\begin{enumerate}
\item $[0]+[2]+[4]=2\pi$.
\item $[0]=[3]$.
\item $[1]=[4]$.
\item $[2]=[5]$.
\end{enumerate}
The following are also equivalent:
\begin{enumerate}
\item $[0]+[4]+[5]=2\pi$.
\item $[2]+[5]=2\pi$.
\end{enumerate}
In both cases, $\bar{0}$ and $\bar{3}$ are parallel, and the hexagon is $H_1$. Conversely, if $\bar{0}$ and $\bar{3}$ are parallel, then the hexagon is either of the two cases.
\end{lemma}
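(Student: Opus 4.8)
My plan is to reduce the whole statement to one elementary device, used three times: cut off two opposite ``ears'' of the hexagon and recognise the remaining quadrilateral as a parallelogram. I use two standing facts, namely that the six angles satisfy $[0]+[1]+\cdots+[5]=4\pi$, and that following the edge directions around the boundary shows $\bar 0\parallel\bar 3$ precisely when $[1]+[2]+[3]=2\pi$, equivalently (by the angle sum) when $[0]+[4]+[5]=2\pi$. This already identifies the first condition of the second group with the geometric statement $\bar 0\parallel\bar 3$.

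Here is the device. Cutting corner $2$ by the segment from corner $1$ to corner $3$ produces the triangle $\triangle123$ with sides $|\bar1|,|\bar2|$ and included angle $[2]$, while cutting corner $5$ produces $\triangle450$ with sides $|\bar4|,|\bar5|$ and included angle $[5]$. Since $|\bar1|=|\bar4|$ and $|\bar2|=|\bar5|$, the law of cosines gives $d_{13}=d_{40}$ (the two cut diagonals) if and only if $\cos[2]=\cos[5]$, i.e. if and only if $[2]=[5]$ or $[2]+[5]=2\pi$. The residual quadrilateral $\square0134$ has $|\bar0|=|\bar3|$ by hypothesis, so it is a parallelogram exactly when its other pair of opposite sides is also equal, $d_{13}=d_{40}$; a parallelogram forces $\bar0\parallel\bar3$, and conversely $\bar0\parallel\bar3$ together with $|\bar0|=|\bar3|$ makes $\square0134$ a parallelogram. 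Hence
\[
\bar0\parallel\bar3 \iff d_{13}=d_{40}\iff \cos[2]=\cos[5].
\]
This single chain yields the converse half of the lemma ($\bar0\parallel\bar3$ gives case A or case B) as well as the two implications $[2]=[5]\Rightarrow\bar0\parallel\bar3$ and $[2]+[5]=2\pi\Rightarrow\bar0\parallel\bar3$.

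For the internal equivalences of the first group I would run the same cut at the other two opposite corner-pairs, obtaining $\cos[0]=\cos[3]\iff\bar1\parallel\bar4$ and $\cos[1]=\cos[4]\iff\bar2\parallel\bar5$. In the equality branch — $[2]=[5]$, or $[0]=[3]$, or $[1]=[4]$ — the two ears are congruent by SAS, so the point reflection in the centre of the parallelogram carries each ear onto the opposite ear and hence the whole hexagon onto itself; the hexagon is centrally symmetric, which returns all three equalities at once and makes $[0]+[2]+[4]=2\pi$ automatic. Thus $[0]=[3]\iff[1]=[4]\iff[2]=[5]\iff$ central symmetry, each implying the sum condition. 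The conclusion $H_1$ is then immediate in every case: $\bar0\parallel\bar3$, $|\bar0|=|\bar3|$, and $[1]+[2]+[3]=[0]+[4]+[5]=2\pi$ are exactly the defining data of a type~$1$ hexagon once $\bar0,\bar3$ are taken as the distinguished parallel pair, and the second group is the surviving dichotomy, with $[2]+[5]=2\pi$ the non-centrally-symmetric alternative to $[2]=[5]$.

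The step that resists pure angle- and triangle-chasing, and which I expect to be the main obstacle, is closing the loop for the sum condition $[0]+[2]+[4]=2\pi$: I can show it is equivalent to $[2]=[5]$ once $\bar0\parallel\bar3$ is already in hand, but to extract any parallelism from this sum condition alone I would fall back on the closure relation $\sum_i|\bar i|e^{\mathrm{i}\theta_i}=0$, pairing opposite edges so that the condition (in its turning-angle form $\alpha_1+\alpha_3+\alpha_5=\pi=\alpha_2+\alpha_4+\alpha_6$) collapses the resulting angle-bisector terms, whereupon positivity of the three edge lengths forces the symmetric solution. A secondary obstacle, present throughout because the hexagon need not be convex, is the orientation bookkeeping: I must check that the ears are genuine and that $\square0134$ is simple so that ``equal opposite sides $\Rightarrow$ parallelogram'' is legitimate (a crossed configuration would instead give an antiparallelogram), and that the passage between $\bar0\parallel\bar3$ and $[1]+[2]+[3]=2\pi$ is read on the correct branch when some $[i]$ is reflex.
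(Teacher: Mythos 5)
Your ear-cutting device is, for most of the lemma, the paper's own argument in light disguise: the paper also cuts the two ears $\triangle 123$ and $\triangle 045$, shows they are congruent (SAS when $[2]=[5]$, equal third sides when $[2]+[5]=2\pi$), and concludes that $\square 0134$ is a parallelogram, whence $\bar 0\parallel\bar 3$ and $[0]+[4]+[5]=2\pi$; the converse direction (parallelism $\Rightarrow$ parallelogram $\Rightarrow$ congruent ears $\Rightarrow$ $[2]=[5]$ or $[2]+[5]=2\pi$) is also the same. Your law-of-cosines phrasing $\cos[2]=\cos[5]\iff d_{13}=d_{40}$ is a cosmetic variant, and your point-reflection argument for central symmetry in the equality branch is a nice shortcut (the paper instead recovers $[0]=[3]$ and $[1]=[4]$ from $[2]=[5]$ by routing through its first part). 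The simplicity/antiparallelogram caveat you raise is real but is left equally implicit by the paper, so I do not count it against you.

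The genuine gap is condition (1) of the first group: nowhere do you prove that $[0]+[2]+[4]=2\pi$ implies any of the other conditions, and without that the claimed equivalences collapse. Your device cannot reach it, because $[0]+[2]+[4]$ is a sum over alternating corners and is not the turning angle between any single pair of opposite edges, so it has no reading as a parallelism statement; you concede this, and your fallback via the closure relation is a plan, not a proof. If you carry it out, pairing opposite edges under $[1]+[3]+[5]=2\pi$ gives
\[
p\sin\tfrac{u}{2}\,e^{i\delta_1}-q\sin\tfrac{v}{2}\,e^{i\delta_2}+r\sin\tfrac{w}{2}\,e^{i\delta_3}=0,
\qquad u=[5]-[2],\; v=[0]-[3],\; w=[1]-[4],
\]
and positivity of $p,q,r$ alone does \emph{not} force $u=v=w=0$: the effective coefficients $\sin\tfrac{u}{2}$, etc., can take either sign, and three vectors in general position can cancel, so one needs genuine control of the directions $\delta_i$, which is where the work lies. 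The paper closes this step with a different idea entirely: since $[0]+[2]+[4]=2\pi$ and the identifications $|\bar 0|=|\bar 3|$, $|\bar 1|=|\bar 4|$, $|\bar 2|=|\bar 5|$ let corner $0$ of one copy, corner $2$ of a second, and corner $4$ of a third fit edge-to-edge around a common vertex, three congruent copies of the hexagon can be glued together; joining corners of this three-tile patch produces two pairs of congruent triangles, hence a parallelogram of diagonals, which forces all three pairs of opposite edges of the hexagon to be parallel and so $[0]=[3]$, $[1]=[4]$, $[2]=[5]$. That gluing argument, or a worked-out substitute for it, is exactly what your proposal is missing. (A smaller point: read literally, the second group's equivalence cannot be proved by anyone, since a centrally symmetric hexagon --- e.g.\ the regular one --- satisfies $[0]+[4]+[5]=2\pi$ but not $[2]+[5]=2\pi$; your ``surviving dichotomy'' reading, namely parallelism $\Rightarrow$ $[2]=[5]$ or $[2]+[5]=2\pi$, is the version the paper actually proves and uses.)
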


The hexagon in the lemma is the third or the fourth in Figure \ref{lemma52fig}, with normal, thick and dashed edges indicating three length equalities. We remark that the three lengths are not assumed to be distinct. The first case means the hexagon is centrally symmetric.

In the proof, the tiles in a tiling are labeled by circled numbers, and denoted \circled{$j$}. We add subscripts to indicate which tile some corners and edges belong to. Therefore $i_j$ is the corner $i$ in \circled{$j$}.

\begin{proof}
Suppose $[0]+[2]+[4]=2\pi$. Then the three edge length equalities imply that three hexagons \circled{1}, \circled{2}, \circled{3} can be glued together, in the first of Figure \ref{lemma52fig}. Then we connect the corners together to form the dotted lines $a,b,c,d$. By the congruent triangles $\triangle_1123$ in \circled{1} and $\triangle_2123$ in \circled{2}, we get $a=b$. By the congruent triangles $\triangle_1345$ in \circled{1} and $\triangle_3345$ in \circled{3}, we get $c=d$. Therefore $a,b,c,d$ form a parallelogram. In particular, we know $c$ and $d$ are parallel. Then by the congruent triangles $\triangle_1345$ and $\triangle_3345$, this implies $\bar{0}$ and $\bar{3}$ in \circled{1} are parallel. By similar argument, we also know $\bar{1}$ and $\bar{4}$ are parallel, and $\bar{2}$ and $\bar{5}$ are parallel. This implies $[0]=[3]$, $[1]=[4]$, $[2]=[5]$. 

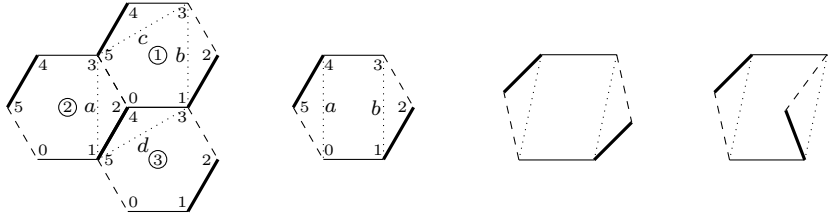
\begin{figure}[htp]
\centering
\begin{tikzpicture}[>=latex,scale=1]

\foreach \a/\b in {60/0.8, -60/0.8, 180/0.8, 0/3}
{
\begin{scope}[shift={(\a:\b)}]

\draw
	(60:0.8) -- (120:0.8)
	(-60:0.8) -- (-120:0.8);

\draw[dashed]
	(60:0.8) -- (0:0.8)
	(180:0.8) -- (240:0.8);

\draw[line width=1.2]
	(-60:0.8) -- (0:0.8)
	(180:0.8) -- (120:0.8);

\foreach \a in {0,...,5}
\node at (60*\a-120:0.65) {\tiny \a};

\end{scope}
}

\draw[dotted]
	(60:1.6) -- (0:0.8) -- (-120:0.8) -- (120:0.8) -- cycle;

\node at (-0.5,0) {\scriptsize $a$};
\node at (0.7,0.7) {\scriptsize $b$};
\node at (0.2,0.9) {\scriptsize $c$};
\node at (0.2,-0.5) {\scriptsize $d$};

\node[inner sep=0.5, draw, shape=circle] at (60:0.8) {\tiny 1};
\node[inner sep=0.5, draw, shape=circle] at (180:0.8) {\tiny 2};
\node[inner sep=0.5, draw, shape=circle] at (-60:0.8) {\tiny 3};

\begin{scope}[xshift=3cm]

\draw[dotted]
	(60:0.8) -- (-60:0.8) 
	(120:0.8) -- (-120:0.8);
	
\node at (-0.3,0) {\scriptsize $a$};
\node at (0.3,0) {\scriptsize $b$};
	
\end{scope}

\begin{scope}[shift={(8cm,-0.7cm)}]

\draw
	(0.3,1.4) -- ++(1,0)
	(0,0) -- ++(1,0);

\draw[dotted]
	(0.3,1.4) -- (0,0)
	(1.3,1.4) -- (1,0);
	
\draw[line width=1.2]
	(0.3,1.4) -- (-0.2,0.9);

\draw[dashed]
	(0,0) -- (-0.2,0.9);

\begin{scope}[xshift=1cm, rotate=-24]

\draw[line width=1.2]
	(0,0) -- (-0.5,0.5);

\draw[dashed]
	(-0.3,1.4) -- (-0.5,0.5);

\end{scope}

\end{scope}

\begin{scope}[xshift=5.2cm]

\draw
	(0.3,0.7) -- ++(1,0)
	(0,-0.7) -- ++(1,0);

\draw[dotted]
	(0.3,0.7) -- (0,-0.7)
	(1.3,0.7) -- (1,-0.7);

\draw[line width=1.2]
	(0.3,0.7) -- (-0.2,0.2)
	(1,-0.7) -- (1.5,-0.2);

\draw[dashed]
	(0,-0.7) -- (-0.2,0.2)
	(1.3,0.7) -- (1.5,-0.2);

\end{scope}

\end{tikzpicture}
\caption{Lemma \ref{lem52}: Equivalent conditions for $\bar{0}$ and $\bar{3}$ to be parallel.}
\label{lemma52fig}
\end{figure}

Suppose $[2]=[5]$ or $[2]+[5]=2\pi$. Then the edge length equalities $|\bar{1}|=|\bar{4}|$ and $|\bar{2}|=|\bar{5}|$ imply that the triangles $\triangle 123$ and $\triangle 045$ in the second of Figure \ref{lemma52fig} are congruent. This implies $a=b$. Then by $|\bar{0}|=|\bar{3}|$, we know $a,b,\bar{0},\bar{3}$ form a parallelogram $\square 0134$. This implies that $\bar{0}$ and $\bar{3}$ are parallel. The parallel property implies $[0]+[4]+[5]=2\pi$. If $[2]=[5]$, then we get $[0]+[2]+[4]=2\pi$. By what we proved earlier, this further implies $[0]=[3]$ and $[1]=[4]$. The hexagon is the third picture. If $[2]+[5]=2\pi$, then the hexagon is the fourth picture.

The argument for $[2]=[5]$ also applies to $[0]=[3]$ or $[1]=[4]$, and we get the same conclusion.

Finally, suppose $\bar{0}$ and $\bar{3}$ are parallel. Then by $|\bar{0}|=|\bar{3}|$, we know $\square 0134$ is a parallelogram. Then by $|\bar{1}|=|\bar{4}|$ and $|\bar{2}|=|\bar{5}|$, we know $\triangle 123$ and $\triangle 045$ are congruent. This implies that the hexagon is either the third or the fourth picture, and we get either of the two cases.
\end{proof}

\begin{lemma}\label{lem51}
Suppose $|\bar{0}|=|\bar{3}|$, $|\bar{1}|=|\bar{5}|$, $|\bar{2}|=|\bar{4}|$. Then the following are equivalent:
\begin{enumerate}
\item $[0]+[2]+[4]=2\pi$.
\item $[2]=[5]$.
\end{enumerate}
The following are also equivalent:
\begin{enumerate}
\item $[0]+[1]=\pi$.
\item $[2]+[5]=2\pi$.
\item $[3]+[4]=\pi$.
\end{enumerate}
In both cases, $\bar{0}$ and $\bar{3}$ are parallel, and the hexagon is $H_1$. Conversely, if $\bar{0}$ and $\bar{3}$ are parallel, then the hexagon is either of the two cases.
\end{lemma}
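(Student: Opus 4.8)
The plan is to mirror the proof of Lemma \ref{lem52}, since the only change is that the two auxiliary length equalities now pair $\bar1$ with $\bar5$ and $\bar2$ with $\bar4$ rather than $\bar1$ with $\bar4$ and $\bar2$ with $\bar5$. Write $P_i$ for corner $i$ and $e_i$ for the edge vector along $\bar i$, so that $e_0+e_1+e_2+e_3+e_4+e_5=0$. I would again work with the central quadrilateral $\square 0134$ together with the two ``ear'' triangles $\triangle 123$ and $\triangle 450$ cut off by the diagonals $P_1P_3$ and $P_4P_0$, exactly as in Figure \ref{lemma52fig}. The apex angles of these two ears are $[2]$ and $[5]$, their non-base sides have lengths $|\bar1|=|\bar5|$ and $|\bar2|=|\bar4|$, and the remaining two sides of $\square 0134$ have lengths $|\bar0|=|\bar3|$. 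Two tools drive everything: (i) the side--angle--side and side--side--side comparison of the two ears, and (ii) the elementary exterior-angle (turning) computation, which shows that two equal-length edges are parallel exactly when a corresponding partial angle sum is a multiple of $\pi$, in which case their edge vectors cancel.

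For the first equivalence I would argue that $[2]=[5]$ gives $\triangle 123\cong\triangle 450$ by side--angle--side, so the diagonals $P_1P_3$ and $P_4P_0$ are equal; since also $|\bar0|=|\bar3|$, the quadrilateral $\square 0134$ has both pairs of opposite sides equal and hence is a parallelogram, forcing $\bar0\parallel\bar3$. Reading the parallelogram together with the congruent ears yields $\bar1\parallel\bar4$, $\bar2\parallel\bar5$ and $[0]=[3]$, $[1]=[4]$, $[2]=[5]$, i.e.\ the hexagon is centrally symmetric; summing the three alternate angles then gives $[0]+[2]+[4]=2\pi$, and a centrally symmetric hexagon is $H_1$. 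The reverse implication $[0]+[2]+[4]=2\pi\Rightarrow[2]=[5]$ I would recover either by the three-hexagon gluing of Lemma \ref{lem52}, re-arranged to match $|\bar1|=|\bar5|$ and $|\bar2|=|\bar4|$ around the common vertex, or from the dichotomy below, since $[0]+[2]+[4]=2\pi$ is incompatible with the second case. For the second equivalence I would use tool (ii): $[0]+[1]=\pi$ is equivalent to $e_5=-e_1$, so closure reduces to $e_0+e_2+e_3+e_4=0$, and simplicity of the hexagon forces the embedded solution $e_3=-e_0$ and $e_4=-e_2$, giving simultaneously $\bar0\parallel\bar3$ and $[3]+[4]=\pi$, while the resulting reflex corner makes the apex angles satisfy $[2]+[5]=2\pi$; here $[0]+[1]=\pi$, $[3]+[4]=\pi$ and $[2]+[5]=2\pi$ become interchangeable, and the hexagon is again $H_1$ after relabeling. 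Finally, for the converse, assuming only $\bar0\parallel\bar3$ I would form the parallelogram $\square 0134$, deduce the two ears are congruent by side--side--side (equal bases, and $|\bar1|=|\bar5|$, $|\bar2|=|\bar4|$), and read the apex-angle equality as either $[2]=[5]$ (both corners convex, the first case) or $[2]+[5]=2\pi$ (one corner reflex, the second case).

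The main obstacle I anticipate is the orientation bookkeeping that actually separates the two cases. ``Both pairs of opposite sides equal'' yields a parallelogram only for an embedded quadrilateral, and a congruent pair of ears can be attached to $\square 0134$ in two ways---the centrally symmetric placement (first case) and the reflected placement (second case)---so I must use that the tile is simple to exclude the crossed configurations and to decide which placement occurs. The same point is what makes the ``lone'' implications delicate: a single condition such as $[0]+[1]=\pi$ forces its partner $[3]+[4]=\pi$ only after simplicity is invoked, since one can exhibit a self-intersecting hexagon with the three length equalities and $e_5=-e_1$ for which $e_3\neq-e_0$. Handling this exactly as in Lemma \ref{lem52}, where the same simplicity is used implicitly, should close the argument.
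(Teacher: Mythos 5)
Your overall plan (mirror Lemma \ref{lem52} with the quadrilateral $\square 0134$ and two ears) is indeed the paper's approach, but two of your key steps are wrong. First, your derivation of the first equivalence imports the conclusion of Lemma \ref{lem52} into a setting where it fails. With this lemma's pairing $|\bar{1}|=|\bar{5}|$, $|\bar{2}|=|\bar{4}|$, the SAS congruence produced by $[2]=[5]$ is $\triangle 123\cong\triangle 045$ with the \emph{orientation-reversing} correspondence $1\leftrightarrow 0$, $2\leftrightarrow 5$, $3\leftrightarrow 4$: the two ears are mirror images attached to opposite sides of $\square 0134$, not images under the point symmetry of the parallelogram (that would be the correspondence $1\leftrightarrow 4$, $2\leftrightarrow 5$, $3\leftrightarrow 0$, and it would force $|\bar{1}|=|\bar{4}|$, an equality you are not given). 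Hence in general $[0]\neq[3]$, $[1]\neq[4]$, $\bar{1}\not\parallel\bar{4}$, and the hexagon is \emph{not} centrally symmetric; compare the third pictures of Figures \ref{lemma52fig} and \ref{lemma51fig}. Your chain therefore asserts false intermediate statements. The correct route, which is the paper's, avoids central symmetry entirely: the parallelogram gives $\bar{0}\parallel\bar{3}$, parallelism gives $[0]+[4]+[5]=2\pi$, and substituting $[5]=[2]$ gives $[0]+[2]+[4]=2\pi$. Also, recovering the reverse implication $[0]+[2]+[4]=2\pi\Rightarrow[2]=[5]$ ``from the dichotomy below'' is circular, since the dichotomy presupposes $\bar{0}\parallel\bar{3}$; the three-hexagon gluing (the paper's argument) is what produces the parallelism from the angle sum.

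Second, and more seriously, your resolution of the obstacle you yourself flag does not work: simplicity of the hexagon does \emph{not} force $e_3=-e_0$ and $e_4=-e_2$ once $e_5=-e_1$. Consider the simple hexagon $P_0=(0,0)$, $P_1=(1,0)$, $P_2=(1,-1)$, $P_3=(0.8,-1.8)$, $P_4=(0.8,-0.8)$, $P_5=(0,-1)$. It satisfies $|\bar{0}|=|\bar{3}|=|\bar{1}|=|\bar{5}|=1$, $|\bar{2}|=|\bar{4}|=\sqrt{0.68}$, and $[0]=[1]=\tfrac{\pi}{2}$, so $[0]+[1]=\pi$ and $e_5=-e_1$; yet $\bar{0}$ is horizontal while $\bar{3}$ is vertical, $[3]+[4]\approx 298^{\circ}\neq\pi$, and $[2]+[5]\approx 242^{\circ}\neq 2\pi$. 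Here the closed quadrilateral with side vectors $e_0,e_2,e_3,e_4$ is crossed, and inserting the cancelling pair $e_1$, $e_5=-e_1$ un-crosses it, so the hexagon is embedded although the quadrilateral is not. This is exactly the configuration your argument must exclude, and no appeal to simplicity alone can exclude it. You have in fact put your finger on a step the paper itself glosses over: its proof asserts that $\square 2345$ is a parallelogram merely because both pairs of opposite sides are equal, which silently dismisses the same crossed (antiparallelogram) case; making the second equivalence correct requires an additional hypothesis (convexity, or non-crossing of $\square 2345$), not just embeddedness of the hexagon. So your proposal does not close the gap it identifies.
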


The hexagon in the lemma is the third or the fourth in Figure \ref{lemma51fig}, with normal, thick and dashed edges indicating three length equalities. We remark that the three lengths are not assumed to be distinct.  

\begin{proof}
Suppose $[0]+[2]+[4]=2\pi$. Then the three edge length equalities imply that three hexagons \circled{1}, \circled{2}, \circled{3} may be glued together, in the first of Figure \ref{lemma51fig}. Then we connect the corners together to form the dotted lines $a,b,c,d$. By the same argument for Lemma \ref{lem52}, we know $\bar{0}$ and $\bar{3}$ are parallel, and the parallel property implies $[0]+[4]+[5]=2\pi$. Compared with $[0]+[2]+[4]=2\pi$, we get $[2]=[5]$.  

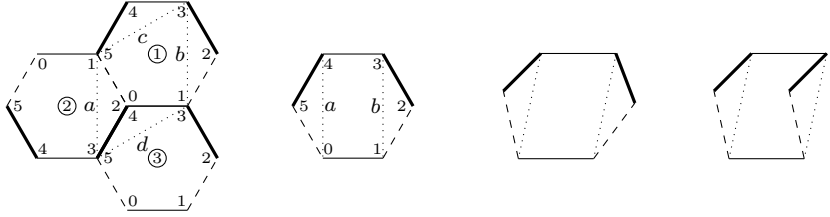
\begin{figure}[htp]
\centering
\begin{tikzpicture}[>=latex,scale=1]

\foreach \a/\b/\c in {60/0.8/1, -60/0.8/1, 180/0.8/-1, 0/3/1}
{
\begin{scope}[shift={(\a:\b)}, yscale=\c]

\draw
	(60:0.8) -- (120:0.8)
	(-60:0.8) -- (-120:0.8);

\draw[line width=1.2]
	(60:0.8) -- (0:0.8)
	(180:0.8) -- (120:0.8);

\draw[dashed]
	(-60:0.8) -- (0:0.8)
	(180:0.8) -- (240:0.8);

\foreach \a in {0,...,5}
\node at (60*\a-120:0.65) {\tiny \a};

\end{scope}
}

\draw[dotted]
	(60:1.6) -- (0:0.8) -- (-120:0.8) -- (120:0.8) -- cycle;

\node at (-0.5,0) {\scriptsize $a$};
\node at (0.7,0.7) {\scriptsize $b$};
\node at (0.2,0.9) {\scriptsize $c$};
\node at (0.2,-0.5) {\scriptsize $d$};

\node[inner sep=0.5, draw, shape=circle] at (60:0.8) {\tiny 1};
\node[inner sep=0.5, draw, shape=circle] at (180:0.8) {\tiny 2};
\node[inner sep=0.5, draw, shape=circle] at (-60:0.8) {\tiny 3};

\begin{scope}[xshift=3cm]

\draw[dotted]
	(60:0.8) -- (-60:0.8) 
	(120:0.8) -- (-120:0.8);
	
\node at (-0.3,0) {\scriptsize $a$};
\node at (0.3,0) {\scriptsize $b$};
	
\end{scope}

\begin{scope}[shift={(5.2cm,-0.7cm)}]

\draw
	(0.3,1.4) -- ++(1,0)
	(0,0) -- ++(1,0);

\draw[dotted]
	(0.3,1.4) -- (0,0)
	(1.3,1.4) -- (1,0);
	
\draw[line width=1.2]
	(0.3,1.4) -- (-0.2,0.9);

\draw[dashed]
	(0,0) -- (-0.2,0.9);

\begin{scope}[xshift=1cm, rotate=-24]

\draw[line width=1.2]
	(-0.3,1.4) -- (0.2,0.9);

\draw[dashed]
	(0,0) -- (0.2,0.9);

\end{scope}

\end{scope}

\begin{scope}[xshift=8cm]

\draw
	(0.3,0.7) -- ++(1,0)
	(0,-0.7) -- ++(1,0);

\draw[dotted]
	(0.3,0.7) -- (0,-0.7)
	(1.3,0.7) -- (1,-0.7);
	
\foreach \a in {0,1}
{
\begin{scope}[xshift=\a cm]

\draw[line width=1.2]
	(0.3,0.7) -- (-0.2,0.2);

\draw[dashed]
	(0,-0.7) -- (-0.2,0.2);

\end{scope}
}

\end{scope}

\end{tikzpicture}
\caption{Lemma \ref{lem51}: Equivalent conditions for $\bar{0}$ and $\bar{3}$ to be parallel.}
\label{lemma51fig}
\end{figure}

Suppose $[2]=[5]$ or $[2]+[5]=2\pi$. By the same argument for Lemma \ref{lem52}, applied to the second of Figure \ref{lemma51fig}, we get a parallelogram $\square 0134$. This implies that $\bar{0}$ and $\bar{3}$ are parallel, and $[0]+[4]+[5]=2\pi$. If $[2]=[5]$, then we get $[0]+[2]+[4]=2\pi$, and the hexagon is the third picture. If $[2]+[5]=2\pi$, then the parallelogram $\square 0134$ and the congruent triangles $\triangle 123$ and $\triangle 045$ imply that $\bar{1}$ and $\bar{5}$ are parallel, and  $\bar{2}$ and $\bar{4}$ are parallel. The parallel properties mean $[0]+[1]=\pi$ and $[3]+[4]=\pi$, and the hexagon is the fourth picture. 

Suppose $[0]+[1]=\pi$. Then by $|\bar{1}|=|\bar{5}|$, the quadrilateral $\square 0125$ is a parallelogram. This implies the line $l$ connecting corners $2$ and $5$ has the same length as $\bar{0}$. By $|\bar{0}|=|\bar{3}|$, we know $l$ and $\bar{3}$ have the same length. Then by $|\bar{2}|=|\bar{4}|$, we know the quadrilateral $\square 2345$ is also a parallelogram. This implies that $[3]+[4]=\pi$, and the hexagon is the fourth picture. Then we get $[2]+[5]=2\pi$.

If $[3]+[4]=\pi$, then the same argument shows $[0]+[1]=\pi$, and the hexagon is the fourth picture, and $[2]+[5]=2\pi$.

Finally, suppose $\bar{0}$ and $\bar{3}$ are parallel. Then by $|\bar{0}|=|\bar{3}|$, we know $\square 0134$ is a parallelogram. Then by $|\bar{1}|=|\bar{5}|$ and $|\bar{2}|=|\bar{4}|$, we know $\triangle 123$ and $\triangle 045$ are congruent. This implies that the hexagon is either the third or the fourth picture, and we get either of the two cases.
\end{proof}

\section{Proof of Main Theorem}
\label{proof}

By Lemma \ref{basic}, under the three tile assumption, a tiling of the plane by congruent hexagons contains the ${\mc D}_2$-tiling in Figure \ref{geom3}. The ${\mc D}_2$-tiling means assigning corners $0,1,\dots,5$ to each of the 19 tiles. Then we have an angle sum equality at each vertex (e.g., we get $[0]+[2]+[4]=2\pi$ at the vertex in the first of Figure \ref{lemma52fig}), and an edge length equality at each edge (e.g., we get $|\bar{0}|=|\bar{3}|$ from the edge shared by \circled{1}, \circled{3} in the first of Figure \ref{lemma51fig}). Moreover, we always have the following angle sum equality for hexagon 
\[
[0]+[1]+[2]+[3]+[4]+[5]=4\pi.
\]
Our task is to show that, for all ${\mc D}_2$-tilings, these equalities together imply that the hexagon is a Reinhardt hexagon.

Since the task is a huge calculation, we will first apply the idea only to the ${\mc D}_1$-tiling. We set up a computer program generating all the possible ${\mc D}_1$-tilings. Each ${\mc D}_1$-tiling ${\mc T}$ gives a collection of equalities $E({\mc T})$ as outlined above. Recall the linear systems of equations $H_1,H_2,H_3$ characterising three Reinhardt hexagons. We compare the ranks of $E({\mc T})$ and $E({\mc T})\cup \sigma(H_i)$, where $\sigma$ is any circular permutation or flipping of the corner labels $0,1,\dots,5$. If the two ranks are the same, then $E({\mc T})$ implies that the hexagon in ${\mc T}$ is $H_i$. 

We call $\text{rank}E({\mc T})$ the {\em rank} of ${\mc T}$. We find that the rank is always between $5$ and $10$. 

\subsection{Ranks 5 and 6}

Among the rank $5$ ${\mc D}_1$-tilings, we remove those ${\mc T}$ satisfying $\text{rank}E({\mc T})=5=\text{rank}(E({\mc T})\cup \sigma(H_i))$ for some $\sigma$ and $H_i$, $i=1,2,3$. What remain are two rank $5$ ${\mc D}_1$-tilings (up to circular permutation or flipping), given by the first and second of Figure \ref{rank5-6}. The respective $E({\mc T})$ is equivalent to the following:
\begin{itemize}
\item $H_{5.1}$: $|\bar{0}|=|\bar{3}|$, $|\bar{1}|=|\bar{4}|$, $|\bar{2}|=|\bar{5}|$, $[0]+[2]+[4]=[1]+[3]+[5]=2\pi$.
\item $H_{5.2}$: $|\bar{0}|=|\bar{3}|$, $|\bar{1}|=|\bar{5}|$, $|\bar{2}|=|\bar{4}|$, $[0]+[2]+[4]=[1]+[3]+[5]=2\pi$.
\end{itemize}
By Lemmas \ref{lem52} and \ref{lem51}, both imply that the hexagon in ${\mc T}$ is $H_1$. 

\begin{figure}[htp]
\centering
\begin{tikzpicture}[>=latex,scale=1]

\foreach \y in {0,...,5}
\fill[gray!50, rotate=60*\y]
	(0.4,0) -- (0.65,0) arc (0:120:0.25);
	
\foreach \c in {0,...,3}
{
\begin{scope}[xshift=3*\c cm]

\foreach \x in {0,...,5}
\draw[rotate=60*\x]
	(0:0.4) -- (60:0.4);

\foreach \x in {0,...,5}
\foreach \y in {0,...,5}
\draw[shift={(30+60*\y:0.693)}, rotate=60*\x]
	(0:0.4) -- (60:0.4);
	
\foreach \x in {0,...,5}
\node at (60*\x:0.25) {\tiny \x};

\end{scope}
}

\foreach \x in {0,...,5}
{


\foreach \y in {0,...,5}
{
\node[shift={(30+60*\y:0.693)}] at (60*\x:0.25) {\tiny \x};
\node at (30+60*\y:0.693) {\tiny $+$};
}

\node at (0,-1.5) {$5.1\colon 450123$};


\begin{scope}[xshift=3cm]

\foreach \y in {1,4}
{
\node[shift={(-30+60*\y:0.693)}] at (60*\x:0.25) {\tiny \x};
\node at (-30+60*\y:0.693) {\tiny $+$};
}

\foreach \y in {0,2,3,5}
{
\node[shift={(-30+60*\y:0.693)}] at (-60*\x-120:0.25) {\tiny \x};
\node at (-30+60*\y:0.693) {\tiny $-$};
}

\node at (0,-1.5) {$5.2\colon 4\tilde{5}\tilde{4}1\tilde{2}\tilde{1}$};

\end{scope}


\begin{scope}[xshift=6cm]

\node[shift={(30:0.693)}] at (60*\x+180:0.25) {\tiny \x};
\node[shift={(90:0.693)}] at (-60*\x-60:0.25) {\tiny \x};
\node[shift={(150:0.693)}] at (-60*\x+120:0.25) {\tiny \x};
\node[shift={(210:0.693)}] at (60*\x+180:0.25) {\tiny \x};
\node[shift={(-90:0.693)}] at (-60*\x-60:0.25) {\tiny \x};
\node[shift={(-30:0.693)}] at (-60*\x+120:0.25) {\tiny \x};

\node at (0,-1.5) {$6(1)\colon 1\tilde{0}\tilde{2}4\tilde{3}\tilde{5}$};

\end{scope}


\begin{scope}[xshift=9cm]

\node[shift={(30:0.693)}] at (60*\x+180:0.25) {\tiny \x};
\node[shift={(90:0.693)}] at (-60*\x-60:0.25) {\tiny \x};
\node[shift={(150:0.693)}] at (-60*\x+120:0.25) {\tiny \x};
\node[shift={(210:0.693)}] at (-60*\x-60:0.25) {\tiny \x};
\node[shift={(-90:0.693)}] at (60*\x+180:0.25) {\tiny \x};
\node[shift={(-30:0.693)}] at (-60*\x+120:0.25) {\tiny \x};

\node at (0,-1.5) {$6(2)\colon 1\tilde{0}\tilde{2}\tilde{4}5\tilde{5}$};

\end{scope}

}

\end{tikzpicture}
\caption{${\mc D}_1$-Tilings of rank 5 or 6.}
\label{rank5-6}
\end{figure}
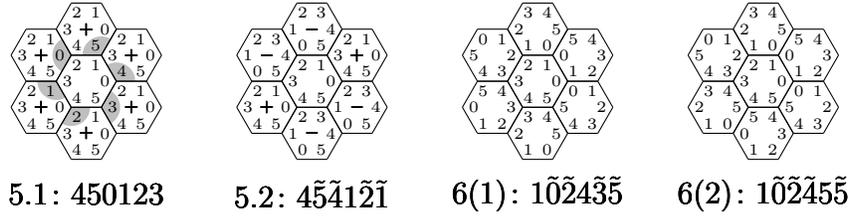

We encode ${\mc D}_1$-tilings in the following way. The center tile has an orientation given by the corners $0,1,\dots,5$. In the hexagon on the opposite side of the edge $\bar{i}$ of the center tile, we have the corner $j_i$ adjacent to the corner $i$ of the center tile. These are the six shaded corners in the first of Figure \ref{rank5-6}. Then we denote the tiling by $j_0j_1j_2j_3j_4j_5$. For example, the first tiling is denoted $450123$. If a neighborhood tile and the center tile have different orientations, then we add $\sim$ to the label. For example, the second tiling is $4\tilde{5}\tilde{4}1\tilde{2}\tilde{1}$, indicating that the tiles on the opposite sides of the edges $\bar{1},\bar{2},\bar{4},\bar{5}$ of the center tile have different orientations. 

Among rank 6 ${\mc D}_1$-tilings, we remove those ${\mc T}$ satisfying $\text{rank}E({\mc T})=6=\text{rank}(E({\mc T})\cup \sigma(H_i))$ for some $\sigma$ and $H_i$, $i=1,2,3,5.1,5.2$, because $H_{5.1}$ and $H_{5.2}$ are also Reinhardt hexagons. What remain are two rank $6$ ${\mc D}_1$-tilings, given by the third and fourth of Figure \ref{rank5-6}. They give the same $E({\mc T})$:
\begin{itemize}
\item $H_6$: $|\bar{0}|=|\bar{1}|$, $|\bar{3}|=|\bar{4}|$, $[0]=[2]$, $[3]=[5]$, $[0]+[1]+[2]=[3]+[4]+[5]=2\pi$.
\end{itemize}
By Lemma \ref{geom1}, the condition implies $|\bar{0}|=|\bar{3}|$. Moreover, by $[0]+[1]+[2]=2\pi$, we know the two edges are parallel. Therefore the hexagon is $H_1$.

\subsection{Rank 7}

Among rank 7 ${\mc D}_1$-tilings, we remove those ${\mc T}$ satisfying $\text{rank}E({\mc T})=7=\text{rank}(E({\mc T})\cup \sigma(H_i))$ for some $\sigma$ and $H_i$, $i=1,2,3,5.1,5.2,6$. What remain are four rank $7$ ${\mc D}_1$-tilings, given by Figure \ref{rank7}. They give two $E({\mc T})$:
\begin{itemize}
\item $H_{7.1}$: $|\bar{3}|=|\bar{4}|=|\bar{5}|$, $|\bar{0}|=|\bar{1}|$, $[1]=[4]=[5]=\frac{2}{3}\pi$, $[0]+[2]+[3]=2\pi$. 
\item $H_{7.2}$: $|\bar{0}|=|\bar{1}|=|\bar{3}|=|\bar{4}|$, $[0]=[2]$, $[3]=[5]$, $[0]+[2]+[4]=[1]+[3]+[5]=2\pi$.
\end{itemize}
By Lemma \ref{geom1}, $H_{7.2}$ implies $|\bar{2}|=|\bar{5}|$. Then by Lemma \ref{lem52}, we know the hexagon is $H_1$. 

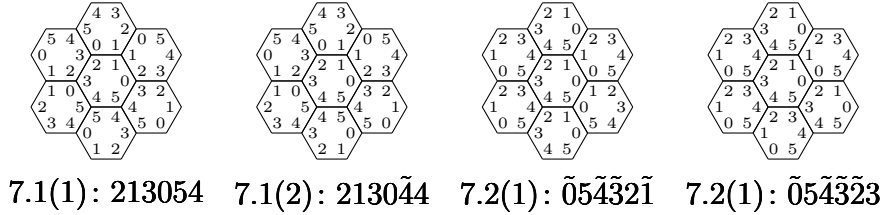
\begin{figure}[htp]
\centering
\begin{tikzpicture}[>=latex,scale=1]

\foreach \c in {0,...,3}
{
\begin{scope}[xshift=3*\c cm]

\foreach \x in {0,...,5}
\draw[rotate=60*\x]
	(0:0.4) -- (60:0.4);

\foreach \x in {0,...,5}
\foreach \y in {0,...,5}
\draw[shift={(30+60*\y:0.693)}, rotate=60*\x]
	(0:0.4) -- (60:0.4);
	
\foreach \x in {0,...,5}
\node at (60*\x:0.25) {\tiny \x};

\end{scope}
}

\foreach \x in {0,...,5}
{


\node[shift={(-90:0.693)}] at (60*\x+180:0.25) {\tiny \x};
\node[shift={(-30:0.693)}] at (60*\x-60:0.25) {\tiny \x};
\node[shift={(30:0.693)}] at (60*\x+120:0.25) {\tiny \x};
\node[shift={(90:0.693)}] at (60*\x-120:0.25) {\tiny \x};
\node[shift={(150:0.693)}] at (60*\x+180:0.25) {\tiny \x};
\node[shift={(210:0.693)}] at (60*\x+60:0.25) {\tiny \x};

\node at (0,-1.5) {$7.1(1)\colon 213054$};


\begin{scope}[xshift=3cm]

\node[shift={(-90:0.693)}] at (-60*\x:0.25) {\tiny \x};
\node[shift={(-30:0.693)}] at (60*\x+300:0.25) {\tiny \x};
\node[shift={(30:0.693)}] at (60*\x+120:0.25) {\tiny \x};
\node[shift={(90:0.693)}] at (60*\x-120:0.25) {\tiny \x};
\node[shift={(150:0.693)}] at (60*\x+180:0.25) {\tiny \x};
\node[shift={(210:0.693)}] at (60*\x+60:0.25) {\tiny \x};

\node at (0,-1.5) {$7.1(2)\colon 2130\tilde{4}4
$};

\end{scope}

\begin{scope}[xshift=6cm]

\node[shift={(-90:0.693)}] at (60*\x:0.25) {\tiny \x};
\node[shift={(-30:0.693)}] at (-60*\x+180:0.25) {\tiny \x};
\node[shift={(30:0.693)}] at (-60*\x+240:0.25) {\tiny \x};
\node[shift={(90:0.693)}] at (60*\x:0.25) {\tiny \x};
\node[shift={(150:0.693)}] at (-60*\x-120:0.25) {\tiny \x};
\node[shift={(210:0.693)}] at (-60*\x-120:0.25) {\tiny \x};

\node at (0,-1.5) {$7.2(1)\colon \tilde{0}5\tilde{4}\tilde{3}2\tilde{1}$};

\end{scope}

\begin{scope}[xshift=9cm]

\node[shift={(-90:0.693)}] at (-60*\x-120:0.25) {\tiny \x};
\node[shift={(-30:0.693)}] at (60*\x:0.25) {\tiny \x};
\node[shift={(30:0.693)}] at (-60*\x+240:0.25) {\tiny \x};
\node[shift={(90:0.693)}] at (60*\x:0.25) {\tiny \x};
\node[shift={(150:0.693)}] at (-60*\x-120:0.25) {\tiny \x};
\node[shift={(210:0.693)}] at (-60*\x-120:0.25) {\tiny \x};

\node at (0,-1.5) {$7.2(1)\colon \tilde{0}5\tilde{4}\tilde{3}\tilde{2}3$};

\end{scope}

}

\end{tikzpicture}
\caption{${\mc D}_1$-Tilings of rank 7.}
\label{rank7}
\end{figure}

For $H_{7.1}$, in addition to $[1]=[4]=[5]=\frac{2}{3}\pi$, we set
\[
|\bar{3}|=|\bar{4}|=|\bar{5}|=1,\;
|\bar{0}|=|\bar{1}|=b,\;
|\bar{2}|=a, 
\]
\[
[0]=\beta,\;
[2]=2\pi-\alpha-\beta,\;
[3]=\alpha.
\]
The hexagon is the first of Figure \ref{H7A}, with the unlabelled edges having length 1, and the lower half being half of a regular hexagon. Then the sine law of the triangles outlined by the gray lines implies
\[
a=\frac{2\cos\beta}{\sin(\frac{\pi}{6}+\alpha+\beta)},\quad
b=\frac{2\cos(\frac{\pi}{6}+\alpha)}{\sqrt{3}\sin(\frac{\pi}{6}+\alpha+\beta)}.
\]
The hexagon in Figure \ref{H7A} has specific values
\[
\alpha=100^{\circ},\quad
\beta=110^{\circ}, \quad
a=0.78986,\quad
b=0.85705,
\]
and is not a Reinhardt hexagon.

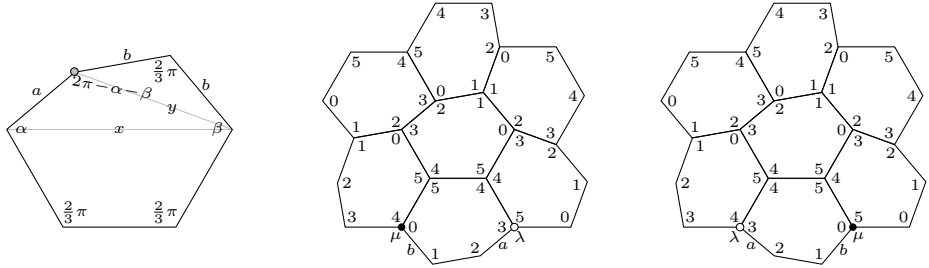
\begin{figure}[htp]
\centering
\begin{tikzpicture}[>=latex,scale=1]


\begin{scope}[yshift=0cm, scale=1]

\draw[gray!50]
	(-1.5,0) -- (1.5,0) -- ++(160:2.252);
	
\draw
	(-1.5,0) -- (240:1.5) -- (-60:1.5) -- (0:1.5) -- ++(130:1.2856) -- ++(190:1.2856) -- ++(220:1.1848);

\filldraw[fill=gray!50]
	(-0.6,0.77) circle (0.05);
		
\node at (1.15,0.6) {\tiny $b$};
\node at (0.1,1) {\tiny $b$};
\node at (-1.1,0.5) {\tiny $a$};

\node at (-0.6,-1.1) {\tiny $\frac{2}{3}\pi$};
\node at (0.6,-1.1) {\tiny $\frac{2}{3}\pi$};
\node at (0.6,0.77) {\tiny $\frac{2}{3}\pi$};
\node at (1.3,0) {\tiny $\beta$};
\node at (-1.3,0) {\tiny $\alpha$};
\node[rotate=-10] at (-0.1,0.55) {\tiny $2\pi\!-\!\alpha\!-\!\beta$};

\node at (0,0) {\tiny $x$};
\node at (0.7,0.27) {\tiny $y$};

\end{scope}

\begin{scope}[xshift=4.5cm, scale=0.5]

\foreach \a/\b/\c/\d in {1/0/0/0, 1/180/0/-2.6, 1/60/-2.25/-1.3,  1/-60/2.25/-1.3, 1/180/-2.1/0.76, 1/-120/0.15/2.06, 1/120/1.86/0.9}
{
\begin{scope}[shift={(\c cm, \d cm)}, rotate=\b, yscale=\a]

\draw
	(-1.5,0) -- (240:1.5) -- (-60:1.5) -- (0:1.5) -- ++(130:1.2856) -- ++(190:1.2856) -- ++(220:1.1848);

\node at (1.2,0) {\tiny 0};
\node at (0.6,0.7) {\tiny 1};
\node at (-0.45,0.55) {\tiny 2};
\node at (-1.17,-0.02) {\tiny 3};
\node at (-120:1.2) {\tiny 4};
\node at (-60:1.2) {\tiny 5};

\end{scope}
}

\fill (60:-3) circle (0.1);
\filldraw[fill=white] (-60:3) circle (0.1);

\node at (-1.25,-3.2) {\tiny $b$};	
\node at (1.2,-3.1) {\tiny $a$};
\node at (60:-3.3) {\tiny $\mu$};
\node at (-60:3.3) {\tiny $\lambda$};

\end{scope}

\begin{scope}[xshift=9cm, scale=0.5]

\foreach \a/\b/\c/\d in {1/0/0/0, -1/0/0/-2.6, 1/60/-2.25/-1.3,  1/-60/2.25/-1.3, 1/180/-2.1/0.76, 1/-120/0.15/2.06, 1/120/1.86/0.9}
{
\begin{scope}[shift={(\c cm, \d cm)}, rotate=\b, yscale=\a]

\draw
	(-1.5,0) -- (240:1.5) -- (-60:1.5) -- (0:1.5) -- ++(130:1.2856) -- ++(190:1.2856) -- ++(220:1.1848);
	
\node at (1.2,0) {\tiny 0};
\node at (0.6,0.7) {\tiny 1};
\node at (-0.45,0.55) {\tiny 2};
\node at (-1.17,-0.02) {\tiny 3};
\node at (-120:1.2) {\tiny 4};
\node at (-60:1.2) {\tiny 5};

\end{scope}
}

\fill (-60:3) circle (0.1);
\filldraw[fill=white] (60:-3) circle (0.1);

\node at (1.25,-3.2) {\tiny $b$};	
\node at (-1.2,-3.1) {\tiny $a$};
\node at (-60:3.3) {\tiny $\mu$};
\node at (60:-3.3) {\tiny $\lambda$};
	
\end{scope}

\end{tikzpicture}
\caption{$H_{7.1}$, and two ${\mc D}_1$-tilings.}
\label{H7A}
\end{figure}

Figure \ref{H7A} also shows two ${\mc D}_1$-tilings $213054$ and $2130\tilde{4}4$ by $H_{7.1}$. For the ${\mc D}_1$-tilings to be real, we require no overlapping among the tiles. The hexagon $H_{7.1}$ is determined by the point {\color{gray} $\bullet$} of the corner 2. We want to find the possible locations of the point, such that the ${\mc D}_1$-tilings are real.

First, a necessary condition is that the sum of each of the six angle pairs around the boundary of ${\mc D}_1$-tilings are $<2\pi$. The four pairs $[2]+[3]$, $[0]+[2]$, $[4]+[5]$, $2[1]$ are common for the two tilings, and all four are $<2\pi$. 

For the first ${\mc D}_1$-tiling $213054$, two more sums $[0]+[4]$ and $[3]+[5]$ should be $<2\pi$. This means $\alpha,\beta<\frac{4}{3}\pi$. The first of Figure \ref{H7B} shows the boundary case $[3]+[5]=2\pi$, and the second shows the boundary case $[0]+[4]=2\pi$. The boundary cases are the boundaries of the range. Then we may conclude that the range of {\color{gray} $\bullet$} for the ${\mc D}_1$-tiling to be real is the shaded infinite region, not including the boundary.

\begin{figure}[htp]
\centering
\begin{tikzpicture}[>=latex,scale=0.5]

\draw
	(1,0) -- (-60:1) -- (240:1) -- (-1,0) -- (-2.464,0) -- (-0.732,1) -- (1,0)
	(-0.732,1) -- (-0.732,3) -- ++(1,0) -- ++(-60:1) -- ++(240:1) -- ++(-30:2) -- ++(-90:2)
	(1,0) -- ++(120:1.464)
	(-60:1) -- ++(-60:1) -- ++(0:1.464) -- ++(1.464:0) -- ++(210:2)  -- ++(150:2) 
	(240:1) -- (240:2) -- ++(180:1) -- ++(240:1.464) -- ++(90:2)
	(-1,0) -- ++(210:2) -- ++(150:2) -- ++(60:1) -- ++(0:1) -- ++(-60:1)
	(-0.732,3) -- ++(240:1.464) -- ++(180:1) -- ++(240:1);

\filldraw[fill=gray!50]
	(-2.464,0) circle (0.1);

\begin{scope}[xshift=8cm]

\draw
	(-1,0) -- (240:1) -- (-60:1) -- (1,0) -- (2.4,0) -- ++(60:1.4) -- (-1,0) -- ++(60:1.4) -- ++(120:1.4) -- (-2,-1.732)
	(240:1) -- (240:2) -- ++(180:1.4) -- ++(240:1.4) -- (-60:2)
	(-60:1) -- (-60:2) -- ++(0:1) -- ++(-60:1.4) -- ++(0:1.4) -- (1,0)
	(-1,0) -- ++(60:1.4) -- ++(0:1.4) -- ++(60:1) -- ++(0:1) -- ++(60:1) -- ++(0:1) -- (3.1,-1.212) -- ++(120:1.4) -- ++(60:1.4) -- ++(120:1)
	(3.1,-1.212) -- ++(0:1) -- ++(-60:1) -- ++(240:1);

\filldraw[fill=gray!50]
	(3.1,1.212) circle (0.1);
		
\end{scope}

\begin{scope}[xshift=18cm]

\fill[gray!50]
	(-3,2) -- (-3,0) -- (-1,0) -- (240:1) -- (-60:1) -- (1,0) -- ++(30:2) -- ++(0,1);

\draw[gray!50]
	(1,0) -- (2,0)
	(1.5,0) arc (0:30:0.5);	

\node at (2,0.1) {\tiny $\frac{1}{6}\pi$};
	
\draw
	(-3,0) -- (-1,0) -- (240:1) -- (-60:1) -- (1,0) -- ++(30:2);

\end{scope}
	
\end{tikzpicture}
\caption{Range for the ${\mc D}_1$-tiling $213054$ by $H_{7.1}$.}
\label{H7B}
\end{figure}
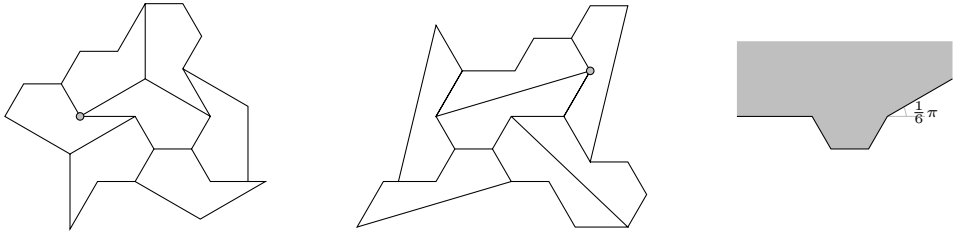

For the second ${\mc D}_1$-tiling $2130\tilde{4}4$, besides $[0]+[4]<2\pi$ and $[3]+[5]<2\pi$, which mean $\alpha,\beta<\frac{4}{3}\pi$, we also need to consider the overlapping of tiles as illustrated in Figure \ref{H7C}. In the boundary cases, the point $\bullet$ touches the tile opposite to $\bar{3}$ on the left, or touches the tile opposite to $\bar{5}$ on the right. The specific hexagon $H_{7.1}$ on the left has  
\[
\alpha=220^{\circ},\quad
\beta=99.68590^{\circ},\quad
a=1.87939,\quad
b=2.20577.
\]
The hexagon on the right has  
\[
\alpha=91.35521^{\circ},\quad
\beta=220^{\circ},\quad
a=4.79227,\quad
b=1.87939.
\]

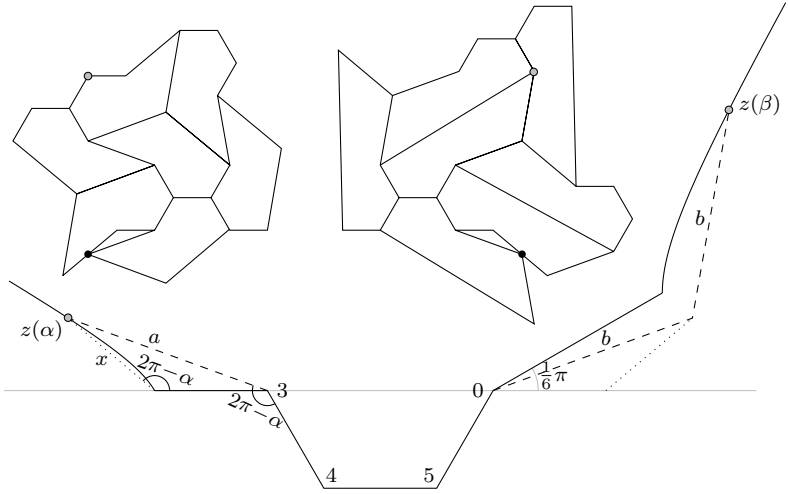
\begin{figure}[htp]
\centering
\begin{tikzpicture}[>=latex,scale=1]


\begin{scope}[shift={(-2.5cm,3cm)}, scale=0.5]

\draw
	(1,0) -- (-60:1) -- (240:1) -- (-1,0) -- ++(160:1.88) -- ++ (120:1) -- ++(60:1) -- ++(0:1) -- ++(40:1.88) -- ++(0:1) -- ++(-60:1) -- ++(-120:1) -- (1,0) -- ++(140.314:2.2058) -- ++(80.314:2.2058)
	(1,0) -- ++(140.314:2.2058) -- ++(200.314:2.2058)
	(-1,0) -- ++(200.314:2.2058) -- ++(140.314:2.2058) -- ++(60:1) -- ++(0:1)
	(240:1) -- ++(240:1) -- ++(180:1)  -- ++(220:1.88)
	(-1,0) -- ++(200.314:2.2058) -- ++(260.314:2.2058)
	(240:2) -- ++(200:1.88) -- ++(-20.314:2.2058) -- ++(40.314:2.2058)
	(-60:1) -- ++(-60:1) -- ++(0:1) -- ++(80.314:2.2058) -- ++(140.314:2.2058);

\draw[fill=gray!50]
	(-2.77,0.638) circle (0.1);
		
\fill
	(-2.77,-2.37) circle (0.1);

\begin{scope}[xshift=6cm]
	
\draw
	(-1,0) -- (240:1) -- (-60:1) -- (1,0) -- ++(20:1.88) -- ++(80:1.88) -- ++(120:1) -- ++(180:1) -- ++(240:1) -- ++(200:1.88) -- ++(260:1.88)
	(-1,0) -- ++(31.355:4.792)
	(-60:1) -- (-60:2) -- ++(0:1) -- ++(-40:1.88) -- ++(20:1.88) -- ++(60:1) -- ++(120:1) -- ++(180:1) -- ++(140:1.88)
	(1,0) -- ++(-28.645:4.792)
	(240:1) -- (240:2) -- ++(180:1) -- ++(91.355:4.792) -- ++(-40:1.88)
	(-60:2) -- ++(-20:1.88) -- ++(-80:1.88) -- (240:2)
	(1,0) -- ++(20:1.88) -- ++(80:1.88) -- ++(120:1) -- ++(60:1) -- ++(0:1) -- ++(-88.645:4.792);

\draw[fill=gray!50]
	(3.08,2.48) circle (0.1);

\fill
	(2.77,-2.37) circle (0.1);
		
\end{scope}

\end{scope}


\draw[gray!50]
	(-5,0) -- (5,0)
	(2.1,0) arc (0:30:0.6);	
	
\draw
	(-3,0) -- (-1.5,0) -- (240:1.5) -- (-60:1.5) -- (1.5,0) -- ++(30:2.598)
	(-2.8,0) arc (0:140:0.2);

\draw[scale=1.5, samples=100, smooth, domain=217:240]
	plot ({-1-(sin(\x)*cos(\x-60))/(sin(2*\x-60)}, {-(sin(\x)*sin(\x-60))/(sin(2*\x-60)})
	plot ({1-(sqrt(3)*sin(\x)*cos(-\x-90))/(sin(2*\x-60)}, {-(sqrt(3)*sin(\x)*sin(-\x-90))/(sin(2*\x-60)});
	
\draw[dotted]
	(-3,0) -- ++(140:1.505)
	(3,0) -- ++(40:1.505);

\draw[dashed]
	(-1.5,0) -- ++(160:2.82)
	(1.5,0) -- ++(20:2.82) -- ++(80:2.82) ;
	
\draw[xshift=-1.5cm, rotate=-20]
	(-0.2,0) arc (180:320:0.2);

\filldraw[fill=gray!50]
	(-4.15,0.97) circle (0.05)
	(4.635,3.735) circle (0.05);

\node at (-3,0.7) {\scriptsize $a$};
\node at (3,0.7) {\scriptsize $b$};
\node at (4.25,2.3) {\scriptsize $b$};
\node at (-3.7,0.4) {\scriptsize $x$};

\node[rotate=-20] at (-2.85,0.27) {\scriptsize $2\pi\!-\!\alpha$};
\node[rotate=-25] at (-1.65,-0.27) {\scriptsize $2\pi\!-\!\alpha$};
\node at (2.3,0.2) {\scriptsize $\frac{1}{6}\pi$};

\node at (1.3,0) {\scriptsize 0};
\node at (-1.3,0) {\scriptsize 3};
\node at (240:1.3) {\scriptsize 4};
\node at (-60:1.3) {\scriptsize 5};

\node at (-4.5,0.8) {\scriptsize $z(\alpha)$};
\node at (5.05,3.8) {\scriptsize $z(\beta)$};
			
\end{tikzpicture}
\caption{Range for the ${\mc D}_1$-tiling $2130\tilde{4}4$ by $H_{7.1}$.}
\label{H7C}
\end{figure}

The lower part of Figure \ref{H7C} shows the track of the corresponding point {\color{gray} $\bullet$}. The corners $0,3,4,5$ of the central hexagon are indicated. Four normal lines have length $1$, with the angle $\frac{2}{3}\pi$ between them, and one normal line (on the right of corner $0$) has length $\sqrt{3}$. The track of two {\color{gray} $\bullet$} are the curves $z(\alpha)$ and $z(\beta)$. All the normal lines and curves form a boundary, and the range of {\color{gray} $\bullet$} for the ${\mc D}_1$-tiling $2130\tilde{4}4$ to be real is the region above the boundary, and not including the boundary.

The location of the point {\color{gray} $\bullet$} on the left satisfies $-2+xe^{i(2\pi-\alpha)}=-1+ae^{i(\alpha-\frac{1}{3}\pi)}$. This is the same as $x=e^{i\alpha}+ae^{i(2\alpha-\frac{1}{3}\pi)}$. Taking the imaginary part, we get 
\[
a=-\frac{\sin\alpha}{\sin(2\alpha-\frac{1}{3}\pi)}, \quad
z(\alpha)=-1-\frac{\sin\alpha}{\sin(2\alpha-\frac{1}{3}\pi)}e^{i(\alpha-\frac{1}{3}\pi)}, 
\]
where $\frac{7}{6}\pi<\alpha<\frac{4}{3}\pi$, and $z(\alpha)$ is the track of the left {\color{gray} $\bullet$} point. We get the similar track for the right  {\color{gray} $\bullet$} point, with $a,\alpha$ replaced by $b,\beta$
\[
b=-\frac{\sin\beta}{\sin(2\beta-\frac{1}{3}\pi)}, \quad
z(\beta)
=1+\sqrt{3}be^{i(\frac{3}{2}\pi-\beta)}
=1-\frac{\sqrt{3}\sin\beta}{\sin(2\beta-\frac{1}{3}\pi)}e^{i(\frac{3}{2}\pi-\beta)}.
\]
Again $\frac{7}{6}\pi<\beta<\frac{4}{3}\pi$, and $z(\beta)$ is the track of the right {\color{gray} $\bullet$} point.

Next, we study how the ${\mc D}_1$-tilings can be extended to ${\mc D}_2$-tilings. This is required for the hexagon to tile the plane, and imposes additional condition on the hexagon. In the extensions, the vertices $\bullet$ and $\circ$ in the second and third of Figure \ref{H7A} have degree $3$, and are filled by third hexagons in the edge-to-edge way.

 The angles $\lambda,\mu$ at degree $3$ vertices $\bullet$ and $\circ$ are angles of the hexagon $H_{7.1}$, and satisfy
\begin{align*}
\lambda\text{-equation} &\colon
\tfrac{2}{3}\pi+\alpha+\lambda=2\pi, \\
\mu\text{-equation} &\colon
\tfrac{2}{3}\pi+\beta+\mu=2\pi.
\end{align*}
We will use the equations to argue that the hexagon $H_{7.1}$ is regular. 

If $a\ne b,1$, then the angle $\lambda$ between $a$ and $1$ equals $\alpha$. By the $\lambda$-equation, this implies $\alpha=\tfrac{2}{3}\pi$. If we further know $b\ne 1$, then we get $\mu=\beta$. If we further know $b=1$, then we get $\mu=\tfrac{2}{3}\pi,\beta$. In both cases, the $\mu$-equation implies $\beta=\tfrac{2}{3}\pi$. Then the formulae for $a$ and $b$ show $a=b=1$, contradicting the assumption $a\ne b$.

If $a=b\ne 1$, then as angles between $a=b$ and $1$, we get $\lambda,\mu=\alpha,\beta$. By the $\lambda$-equation and the $\mu$-equation, this always implies $\alpha+\beta=\tfrac{4}{3}\pi$, and $2\pi-\alpha-\beta=\tfrac{2}{3}\pi$. Therefore the upper part of $H_{7.1}$ is half of the regular hexagon of side length $a=b$. This shares the same edge $x$ with the lower part of $H_{7.1}$, which is half of the regular hexagon of side length $1$. Therefore the two regular hexagons have the same size, contradicting the assumption $a=b\ne 1$.

If $a=1\ne b$, then $\lambda=\tfrac{2}{3}\pi,\alpha$. By the $\lambda$-equation, we get $\alpha=\tfrac{2}{3}\pi$. Then by $a=1$ and $\alpha=\tfrac{2}{3}\pi$, we know $y$ is the base of the isosceles triangle with side length $1$ and top angle $\tfrac{2}{3}\pi$. Since $y$ is also the base of the isosceles triangle with side length $b$ and top angle $\tfrac{2}{3}\pi$, we get $b=1$, contradicting the assumption $a=1\ne b$.

The only remaining case is $a=b=1$. In this case, we get $\lambda,\mu=\tfrac{2}{3}\pi,\alpha,\beta,2\pi-\alpha-\beta$. Then by the $\lambda$-equation and the $\mu$-equation, we find that one of $\alpha,\beta,2\pi-\alpha-\beta$ is $\tfrac{2}{3}\pi$. 
\begin{itemize}
\item If $\alpha=\tfrac{2}{3}\pi$, then we use $y$ to divide $H_{7.1}$ into two parts. One part is the isosceles triangle of side length $1$ and top angle $\tfrac{2}{3}\pi$. The other part has four edges of length $1$ and angle $\tfrac{2}{3}\pi$ between these four edges. The union of two parts is a regular hexagon.
\item If $\beta=\tfrac{2}{3}\pi$, then $H_{7.1}$ has four consecutive angles of value $\tfrac{2}{3}\pi$. Since all edges have length $1$, this implies the hexagon is regular.
\item If $2\pi-\alpha-\beta=\tfrac{2}{3}\pi$, then we use $x$ to divide $H_{7.1}$ into two parts. Both parts  have three edges of length $1$ with angle $\tfrac{2}{3}\pi$ between them. Therefore the two parts are the two halves of the regular hexagon of side length $1$, and $H_{7.1}$ is a regular hexagon.
\end{itemize}

In later discussions, we will carry out similar argument using special features of the boundary of the ${\mc D}_1$-tiling, and conclude the hexagon is regular. In Section \ref{comment}, we will provide an alternative argument, without using the special features.  

\subsection{Rank 8}

We carry out the same process for the rank 8 ${\mc D}_1$-tilings, removing those that can imply $H_i$, $i=1,2,3,5.1,5.2,6,7.2$. What remain are seven hexagons. For each hexagon, we first list the tilings, using the notation introduced in Figures \ref{rank5-6} and \ref{rank7}. Then we list the edge length equalities, followed by the angle equalities.
\begin{itemize}
\item $H_{8.1}$: $\tilde{0}130\tilde{4}5$, $\tilde{0}130\tilde{4}\tilde{4}$, $\tilde{0}130\tilde{5}0$, $\tilde{0}13055$, $\tilde{0}1305\tilde{4}$.

$|\bar{3}|=|\bar{4}|=|\bar{5}|$, $|\bar{0}|=|\bar{1}|$,

$[0]=[1]=[4]=[5]=\frac{2}{3}\pi$, $[2]+[3]=\frac{4}{3}\pi$. 
\item $H_{8.2}$: $\tilde{0}5\tilde{2}\tilde{5}05$.

$|\bar{1}|=|\bar{3}|=|\bar{4}|=|\bar{5}|$, 

$[1]=[3]=[5]=\frac{2}{3}\pi$, $[0]=[2]$, $[0]+[2]+[4]=2\pi$. 
\item $H_{8.3}$: $\tilde{5}\tilde{0}\tilde{2}\tilde{0}\tilde{4}0$.

$|\bar{0}|=|\bar{1}|=|\bar{3}|=|\bar{5}|$, 

$[0]=[2]=[4]$, $[3]=[5]$, $[0]+[1]+[2]=[3]+[4]+[5]=2\pi$. 
\item $H_{8.4}$: 
$351X$ and $3\tilde{4}1X$, where $X$ is one of  
$555$, 
$5\tilde{4}5$, 
$55\tilde{4}$, 
$5\tilde{4}\tilde{4}$, 
$\tilde{4}55$, 
$\tilde{4}\tilde{4}5$, 
$540$, 
$5\tilde{5}\tilde{3}$, 
$\tilde{4}40$, 
$\tilde{4}\tilde{5}\tilde{3}$.

$|\bar{1}|=|\bar{3}|=|\bar{4}|=|\bar{5}|$, $|\bar{0}|=|\bar{2}|$, 

$[4]=[5]=\frac{2}{3}\pi$, $[0]+[3]=[1]+[2]=\frac{4}{3}\pi$.  
\item $H_{8.5}$: $025352$, $104431$.

$|\bar{0}|=|\bar{1}|=|\bar{5}|$, $|\bar{2}|=|\bar{3}|=|\bar{4}|$, 

$[0]=[3]$, $[1]=[4]$, $[0]+[3]+[4]=[1]+[2]+[5]=2\pi$. 
\item $H_{8.6}$: $025442$.

$|\bar{0}|=|\bar{1}|=|\bar{5}|$, $|\bar{2}|=|\bar{3}|=|\bar{4}|$, 

$[0]=[4]$, $[1]=[3]$, $[0]+[3]+[4]=[1]+[2]+[5]=2\pi$. 
\item $H_{8.7}$: $\tilde{4}\tilde{5}2\tilde{3}\tilde{0}5$.

$|\bar{0}|=|\bar{1}|=|\bar{2}|=|\bar{4}|=|\bar{5}|$, 

$[2]=[4]$, $[3]=[5]$, $[0]+[2]+[4]=[1]+[3]+[5]=2\pi$. 
\end{itemize}

The hexagons $H_{8.5}$ and $H_{8.6}$ are the first and second of Figure \ref{rank8}, with the normal edges having the same length, and the thick edges having the same length. The angle equalities imply the quadrilaterals $\square 0125$ and $\square 2345$ are similar. Since the quadrilaterals share the same dotted edge connecting corners 2 and 5, they are actually congruent. Therefore the thick and normal edges have equal length, and the hexagon is equilateral.

In $H_{8.5}$, by the congruence of the quadrilaterals, we get $[2]=[5]$. Combined with $[0]=[3]$ and $[1]=[4]$, and the angle sum for the hexagon, we get $[0]+[1]+[2]=2\pi$. Since the hexagon is equilateral, this implies that the hexagon is $H_1$ (actually centrally symmetric).

In $H_{8.6}$, the equilateral property and $[1]+[2]+[5]=2\pi$ imply that the hexagon is $H_2$. 

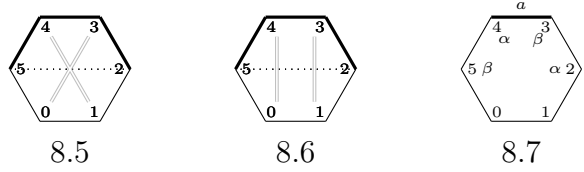
\begin{figure}[htp]
\centering
\begin{tikzpicture}[>=latex,scale=1]

\foreach \a in {0,1}
\foreach \x in {0,1,2}
{
\begin{scope}[xshift=3*\a cm]

\draw[rotate=-60*\x]
	(0:0.8) -- (-60:0.8);

\draw[rotate=60*\x, line width=1.2]
	(0:0.8) -- (60:0.8);

\draw[dotted]
	(0:0.8) -- (180:0.8);

\foreach \y in {0,...,5}
\node at (60*\y-120:0.65) {\tiny \y};

\end{scope}
}

\draw[gray!50, double]
	(60:0.5) -- (240:0.5)
	(-60:0.5) -- (120:0.5);
		
\node at (0,-1.1) {8.5};

\begin{scope}[xshift=3cm]

\draw[gray!50, double]
	(60:0.5) -- (-60:0.5)
	(240:0.5) -- (120:0.5);
		
\node at (0,-1.1) {8.6};
	
\end{scope}

\begin{scope}[xshift=6cm]

\foreach \x in {0,...,4}
\draw[rotate=60*\x]
	(120:0.8) -- (180:0.8);

\draw[line width=1.2]
	(120:0.8) -- (60:0.8);

\foreach \y in {0,...,5}
\node at (60*\y-120:0.65) {\tiny \y};

\node at (0,0.85) {\tiny $a$};
\node at (0:0.45) {\tiny $\alpha$};
\node at (120:0.45) {\tiny $\alpha$};
\node at (60:0.45) {\tiny $\beta$};
\node at (180:0.45) {\tiny $\beta$};
		
\node at (0,-1.1) {8.7};
	
\end{scope}

\end{tikzpicture}
\caption{$H_{8.5},H_{8.6},H_{8.7}$ are Reinhardt hexagons.}
\label{rank8}
\end{figure}

For the other rank 8 ${\mc D}_1$-tilings, we may actually calculate the hexagon. The calculation is based on the equation
\begin{equation}\label{hexeq}
\sum_{k=0}^5|\bar{k}|e^{i(k\pi-[1]-\cdots-[k])}=0.
\end{equation}
The idea is that a general hexagon is determined by five edge lengths and four angles between the five edges. Therefore, up to the scaling of the hexagon, there are $5+4-1=8$ free choices of variables. In principle, given a rank 8 system of linear equations among the angles and edge lengths, we may hope to determine the hexagon. In the actual calculation, only $H_{8.7}$ can be determined to be the regular hexagon. The other four hexagons $H_{8.1}$, $H_{8.2}$, $H_{8.3}$, $H_{8.4}$ allow one free parameter.

\medskip

\noindent{\em Hexagon $H_{8.7}$}

\medskip

In $H_{8.7}$, we assume $|\bar{0}|=|\bar{1}|=|\bar{2}|=|\bar{4}|=|\bar{5}|=1$, and denote (see the third of Figure \ref{rank8})
\[
|\bar{3}|=a,\;
[0]=2\pi-2\alpha,\;
[1]=2\pi-2\beta,\;
[2]=[4]=\alpha,\;
[3]=[5]=\beta.
\] 
Then \eqref{hexeq} becomes ($|\bar{0}|$ in the equality corresponds to the edge $a=|\bar{3}|$)
\[
a+e^{i(\pi-\alpha)}
+e^{i(2\pi-\alpha-\beta)}
+e^{i(\pi+\alpha-\beta)}
+e^{i(\alpha+\beta)}
+e^{i(\pi+\beta)}=0.
\]
Since $a$ and $e^{i(2\pi-\alpha-\beta)}+e^{i(\alpha+\beta)}=e^{-i(\alpha+\beta)}
+e^{i(\alpha+\beta)}$ are real, the sum of the imaginary parts of the remaining terms vanish
\[
0=-\sin\alpha+\sin\beta+\sin(\alpha-\beta)
=4\sin\tfrac{\alpha-\beta}{2}\sin\tfrac{\alpha}{2}\sin\tfrac{\beta}{2}.
\]
Since $0<\alpha,\beta<2\pi$, the equality implies $\alpha=\beta$. Therefore we get $[2]=[3]=[4]=[5]$, which means the hexagon is symmetric, and $[0]=[1]$. Then by $[0]+[2]+[5]=[0]+[2]+[4]=2\pi$, the hexagon is $H_2$.

Although we can further show that $H_{8.7}$ is regular, this is not needed for the main theorem. 

\medskip

\noindent{\em Hexagon $H_{8.1}$}

\medskip

In addition to $[0]=[1]=[4]=[5]=\frac{2}{3}\pi$, we set 
\[
|\bar{3}|=|\bar{4}|=|\bar{5}|=1,\;
|\bar{0}|=|\bar{1}|=b,\;
|\bar{2}|=a,\;
[2]=\tfrac{4}{3}\pi-\alpha,\;
[3]=\alpha.
\]
The hexagon is the first of Figure \ref{H8A} (unlabelled edges have length 1), and is the special case $\beta=\frac{2}{3}\pi$ of $H_{7.1}$
\[
a=-\frac{1}{\sin(\frac{\pi}{6}-\alpha)},\quad
b=\frac{2\cos(\frac{\pi}{6}+\alpha)}{\sqrt{3}\sin(\frac{\pi}{6}-\alpha)}.
\]
The specific hexagon $H_{8.1}$ in Figure \ref{H8A} has  
\[
\alpha=100^{\circ},\quad
a=1.06418,\quad
b=0.78986,
\]
and is not a Reinhardt hexagon.

\begin{figure}[htp]
\centering
\begin{tikzpicture}[>=latex,scale=1]


\draw[gray!50]
	(-1.5,0) -- (1.5,0) -- ++(150:1.984);

\draw
	(-1.5,0) -- (240:1.5) -- (-60:1.5) -- (0:1.5) -- ++(120:1.1453) -- ++(-1.1453,0) -- (-1.5,0);

\node at (1.3,0.6) {\tiny $b$};
\node at (0.35,1.1) {\tiny $b$};
\node at (-0.9,0.6) {\tiny $a$};

\node at (0.05,0.8) {\tiny $\frac{4}{3}\pi\!-\!\alpha$};
\node at (-1.25,0) {\tiny $\alpha$};
\node at (-0.6,-1.1) {\tiny $\frac{2}{3}\pi$};
\node at (0.6,-1.1) {\tiny $\frac{2}{3}\pi$};
\node at (1.25,0) {\tiny $\frac{2}{3}\pi$};
\node at (0.8,0.8) {\tiny $\frac{2}{3}\pi$};

\begin{scope}[xshift=4.5cm]

\end{scope}

\foreach \x/\y in {4.5/0, 9/0, 0/-4, 4.5/-4, 9/-4}
\foreach \a/\b/\c/\d in {1/0/0/0, 1/180/-1.72/0.99, 1/60/-2.25/-1.3, 1/-120/0.53/2.29, -1/-120/2.25/1.3}
{
\begin{scope}[shift={(\x cm, \y cm)}, scale=0.5, shift={(\c cm, \d cm)}, rotate=\b, yscale=\a]

\draw
	(-1.5,0) -- (240:1.5) -- (-60:1.5) -- (0:1.5) -- ++(120:1.1453) -- ++(-1.1453,0) -- (-1.5,0);

\node at (1.2,0) {\tiny 0};
\node at (0.8,0.7) {\tiny 1};
\node at (-0.2,0.7) {\tiny 2};
\node at (-1.17,-0.02) {\tiny 3};
\node at (-120:1.2) {\tiny 4};
\node at (-60:1.2) {\tiny 5};

\end{scope}
}

\foreach \x/\y/\a/\b/\c/\d in 
{4.5/-4/1/180/0/-2.6, 4.5/-4/1/-120/2.25/-1.3,
9/-4/1/180/0/-2.6, 9/-4/-1/60/2.25/-1.3, 
4.5/0/-1/0/0/-2.6, 4.5/0/1/-120/2.25/-1.3,
9/0/-1/60/2.25/-1.3, 9/0/-1/0/0/-2.6,
0/-4/-1/60/0/-2.6, 0/-4/1/180/2.25/-1.3}
{
\begin{scope}[shift={(\x cm, \y cm)}, scale=0.5, shift={(\c cm, \d cm)}, rotate=\b, yscale=\a]

\draw
	(-1.5,0) -- (240:1.5) -- (-60:1.5) -- (0:1.5) -- ++(120:1.1453) -- ++(-1.1453,0) -- (-1.5,0);

\node at (1.2,0) {\tiny 0};
\node at (0.8,0.7) {\tiny 1};
\node at (-0.2,0.7) {\tiny 2};
\node at (-1.17,-0.02) {\tiny 3};
\node at (-120:1.2) {\tiny 4};
\node at (-60:1.2) {\tiny 5};

\end{scope}
}

\foreach \x/\y in {4.5/0, 9/0, 0/-4, 4.5/-4, 9/-4}
{
\begin{scope}[shift={(\x cm, \y cm)}, scale=0.5]

\fill
	(1.5,2) circle (0.1);

\node at (1.6,2.8) {\tiny $a$};
\node at (2.2,2.5) {\tiny $a$};
\node at (1.7,2.4) {\tiny $\lambda$};

\end{scope}
}

\end{tikzpicture}
\caption{$H_{8.1}$, and five ${\mc D}_1$-tilings.}
\label{H8A}
\end{figure}
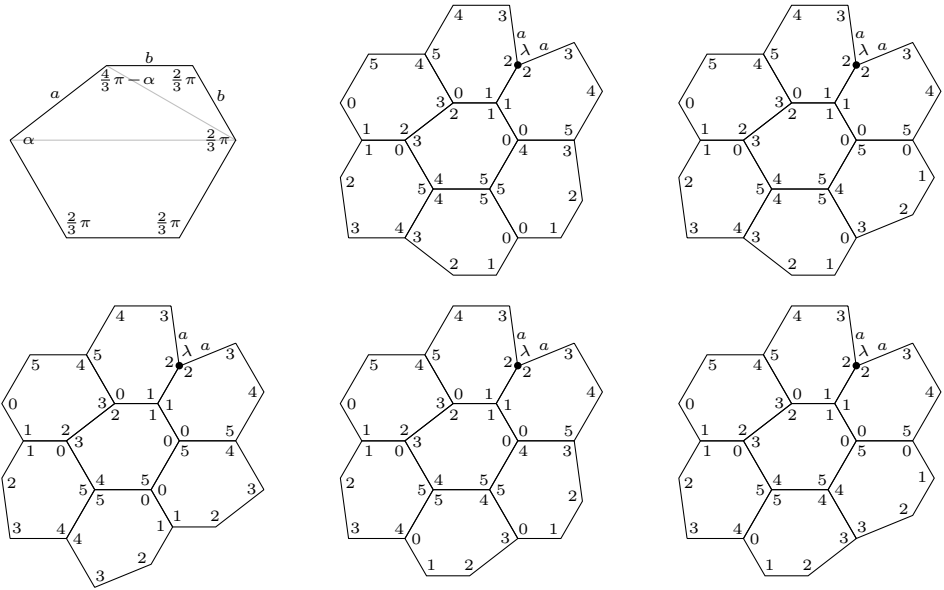

Figure \ref{H8A} also shows five ${\mc D}_1$-tilings $\tilde{0}130\tilde{4}5$, $\tilde{0}130\tilde{4}\tilde{4}$, $\tilde{0}130\tilde{5}0$, $\tilde{0}13055$, $\tilde{0}1305\tilde{4}$ by $H_{8.1}$. These are real ${\mc D}_1$-tilings only if all angles are positive, and $a,b>0$, and the tiles in the ${\mc D}_1$-tilings do not overlap. The first two conditions mean $\frac{1}{3}\pi<\alpha<\frac{7}{6}\pi$. The third condition implies the sum of the two angles at any degree $3$ vertex of a ${\mc D}_1$-tiling is $<2\pi$. For example, for $\tilde{0}130\tilde{4}5$, this means the six sums $[3]+[5]$, $2[2]$, $[4]+[5]$, $2[1]$, $[3]+[4]$, $2[0]$ are $<2\pi$. It is easy to see that all six sums are indeed $<2\pi$ for $\frac{1}{3}\pi<\alpha<\frac{7}{6}\pi$. By similar reason, we see that the first four ${\mc D}_1$-tilings $\tilde{0}130\tilde{4}5$, $\tilde{0}130\tilde{4}\tilde{4}$, $\tilde{0}130\tilde{5}0$, $\tilde{0}13055$ are real tilings for $\frac{1}{3}\pi<\alpha<\frac{7}{6}\pi$. In the fifth ${\mc D}_1$-tiling $\tilde{0}1305\tilde{4}$, we require $2[3]<2\pi$, which means $\alpha<\pi$. Then $\tilde{0}1305\tilde{4}$ is a real tiling for $\frac{1}{3}\pi<\alpha<\pi$.

If the ${\mc D}_1$-tilings are extended to ${\mc D}_2$-tilings, then at the degree $3$ vertex $\bullet$, the angle $\lambda$ between the two $a$ edges is an angle of the hexagon, and satisfies $2(\frac{4}{3}\pi-\alpha)+\lambda=2\pi$. This implies $a=b$ or $a=1$, and $\lambda=\frac{2}{3}\pi,\alpha,\frac{4}{3}\pi-\alpha$. In all cases, the equation $2(\frac{4}{3}\pi-\alpha)+\lambda=2\pi$ implies $\alpha=\frac{2}{3}\pi$. Then by the formulae for $a$ and $b$, we get $a=b=1$. Therefore the hexagon is regular.

We remark that, as a special case of $H_{7.1}$, we also have the ${\mc D}_1$-tilings $213054$ or $2130\tilde{4}4$ by $H_{8.1}$. However, the two tilings do not imply the extra equality $[0]=\frac{2}{3}\pi$ satisfied by $H_{8.1}$.

\medskip

\noindent{\em Hexagon $H_{8.2}$}

\medskip

In addition to $[1]=[3]=[5]=\frac{2}{3}\pi$, we set 
\[
|\bar{1}|=|\bar{3}|=|\bar{4}|=|\bar{5}|=1,\;
|\bar{0}|=a,\;
|\bar{2}|=b,\;
[0]=[2]=\alpha,\;
[4]=2\pi-2\alpha.
\]
Substituting into \eqref{hexeq}, we get
\[
a
=\frac{4\sqrt{3}\sin^2\alpha-2\sin\alpha-\sqrt{3}}{2\sin(\alpha-\tfrac{1}{3}\pi)},\quad
b=\frac{4\sin\alpha-\sqrt{3}}{2\sin(\alpha-\tfrac{1}{3}\pi)}.
\]
The specific hexagon $H_{8.2}$ in Figure \ref{H8B} has
\[
\alpha=130^{\circ},\quad
a=0.42647,\quad
b=0.70881,
\]
and is not a Reinhardt hexagon.

\begin{figure}[htp]
\centering
\begin{tikzpicture}[>=latex,scale=1]


\begin{scope}[scale=1.5]

\draw[gray!50]
	(0,0) -- (0.366,1.45) -- (1.5,0.863) -- cycle;
	
\draw
	(0,0) -- ++(0:1) -- ++(60:1) -- ++(110:0.4365) -- ++(170:1) -- ++(220:0.7088) -- ++(280:1);

\node at (1.5,1.1) {\tiny $a$};
\node at (0.06,1.28) {\tiny $b$};

\node at (0.9,0.12) {\tiny $\frac{2}{3}\pi$};
\node at (1.25,1.18) {\tiny $\frac{2}{3}\pi$};
\node at (-0.02,0.92) {\tiny $\frac{2}{3}\pi$};
\node at (1.4,0.87) {\tiny $\alpha$};
\node at (0.39,1.35) {\tiny $\alpha$};
\node at (0.28,0.1) {\tiny $2\pi\!-\!2\alpha$};
\end{scope}

\begin{scope}[shift={(4.5cm,0.8cm)}, scale=0.8]

\foreach \a/\b/\c/\d in {1/0/0/0, -1/-20/-1.12/1.33, 1/120/1.5/-0.87, 1/-120/1.5/0.87, -1/-140/3.21/1.17, 1/-10/0.36/1.45, -1/80/-1.13/1.34}
{
\begin{scope}[shift={(\c cm, \d cm)}, rotate=\b, yscale=\a]

\draw
	(0,0) -- ++(0:1) -- ++(60:1) -- ++(110:0.4365) -- ++(170:1) -- ++(220:0.7088) -- ++(280:1);

\node at (1.3,0.85) {\tiny 0};
\node at (1.2,1.15) {\tiny 1};
\node at (0.4,1.25) {\tiny 2};
\node at (0,0.9) {\tiny 3};
\node at (0.15,0.17) {\tiny 4};
\node at (0.9,0.17) {\tiny 5};

\end{scope}
}

\fill
	(2,2.04) circle (0.07);
	
\node at (2.05,2.35) {\tiny $a$};	
\node at (2.45,2.2) {\tiny $b$};	
\node at (2.2,2.2) {\tiny $\lambda$};

\end{scope}

\end{tikzpicture}
\caption{$H_{8.2}$, and a ${\mc D}_1$-tiling.}
\label{H8B}
\end{figure}

Figure \ref{H8B} also shows the ${\mc D}_1$-tiling $\tilde{0}5\tilde{2}\tilde{5}05$ by $H_{8.2}$. For all angles to be positive, we need $0<\alpha<\pi$. For the tiles in the ${\mc D}_1$-tiling not to overlap, we need $2[4]<2\pi$. This means $\alpha>\frac{1}{2}\pi$. For $\frac{1}{2}\pi<\alpha<\pi$, we know the denominator $2\sin(\alpha-\tfrac{1}{3}\pi)$ of $a$ and $b$ are positive. Then we need both numerators $4\sqrt{3}\sin^2\alpha-2\sin\alpha-\sqrt{3}$ and $4\sin\alpha-\sqrt{3}$ to be positive. This means $\frac{1}{2}\pi<\alpha<\pi-\arcsin\frac{1+\sqrt{13}}{4\sqrt{3}}=138.33654^{\circ}$. Then we find $\tilde{0}5\tilde{2}\tilde{5}05$ is a real ${\mc D}_1$-tiling for $\alpha$ within this range.

If the ${\mc D}_1$-tiling is extended to a ${\mc D}_2$-tiling, then the angle $\lambda$ at the degree $3$ vertex $\bullet$ is an angle of the hexagon and satisfies $\alpha+\alpha+\lambda=2\pi$. Since $a$ and $b$ are separated in $H_{8.2}$, and $\lambda$ is between $a$ and $b$, we get $a=1$ or $b=1$. If $a\ne 1$ and $b=1$, or $a=1$ and $b\ne 1$, then $\lambda=\alpha,\frac{2}{3}\pi$. Substituting into the equation $\alpha+\alpha+\lambda=2\pi$, we get $\alpha=\frac{2}{3}\pi$. Then the formulae for $a$ and $b$ imply $a=b=1$.

Therefore we have $a=b=1$. Then we get three isosceles  triangles of side length $1$ and top angle $\frac{2}{3}\pi$, with the gray edges as bases. The triangles are congruent, and the gray edges form a regular triangle. This implies that the hexagon is regular.

\medskip

\noindent{\em Hexagon $H_{8.3}$}

\medskip

We set 
\[
|\bar{0}|=|\bar{1}|=|\bar{3}|=|\bar{5}|=1,\;
|\bar{2}|=a, \;
|\bar{4}|=b,
\]
\[
[0]=[2]=[4]=2\pi-2\alpha,\;
[1]=4\alpha-2\pi,\;
[3]=[5]=\alpha.
\]
Substituting into \eqref{hexeq}, we get
\[
a=4\cos^2\alpha+2\cos\alpha+1,\quad
b=-4\cos\alpha-1.
\]
The specific hexagon $H_{8.3}$ in Figure \ref{H8A} has
\[
\alpha=110^{\circ},\;
a=0.78387,\;
b=0.36808,
\]
and is not a Reinhardt hexagon.

\begin{figure}[htp]
\centering
\begin{tikzpicture}[>=latex,scale=1]


\begin{scope}[scale=1.5]

\draw
	(0,0) -- ++(1,0) -- ++(40:1) -- ++(140:1) -- ++(180:0.78387) -- ++(250:1) -- ++(290:0.36808);

\node at (0.6,1.35) {\tiny $a$};
\node at (-0.12,0.17) {\tiny $b$};

\node at (0.08,0.1) {\tiny $\alpha$};
\node at (0.8,0.1) {\tiny $2\pi\!-\!2\alpha$};
\node at (0.2,0.35) {\tiny $2\pi\!-\!2\alpha$};
\node at (0.8,1.2) {\tiny $2\pi\!-\!2\alpha$};
\node at (0.27,1.2) {\tiny $\alpha$};
\node at (1.4,0.63) {\tiny $4\alpha\!-\!2\pi$};

\end{scope}

\begin{scope}[shift={(5cm,0.6cm)}, scale=0.8]

\foreach \a/\b/\c/\d in {1/0/0/0, 1/180/1/0, -1/0/0/2.57, -1/-70/-0.13/2.227, -1/-140/0/0, -1/40/1/0, -1/180/2.77/0.645}
{
\begin{scope}[shift={(\c cm, \d cm)}, rotate=\b, yscale=\a]

\draw
	(0,0) -- ++(1,0) -- ++(40:1) -- ++(140:1) -- ++(180:0.78387) -- ++(250:1) -- ++(290:0.36808);

\node at (0.95,0.17) {\tiny 0};
\node at (1.5,0.65) {\tiny 1};
\node at (0.95,1.12) {\tiny 2};
\node at (0.33,1.12) {\tiny 3};
\node at (0.05,0.4) {\tiny 4};
\node at (0.12,0.17) {\tiny 5};

\end{scope}
}

\fill
	(-0.99,0.84) circle (0.07);

\filldraw[fill=white]
	(-0.13,2.23) circle (0.07);
	
\node at (-1.25,1.2) {\tiny $a$};
\node at (-1.35,0.74) {\tiny $a$};

\node at (-0.15,2.5) {\tiny $b$};
\node at (-0.35,2.4) {\tiny $b$};
	
\end{scope}

\end{tikzpicture}
\caption{$H_{8.3}$, and a ${\mc D}_1$-tiling.}
\label{H8C}
\end{figure}

Figure \ref{H8C} also shows the ${\mc D}_1$-tiling $\tilde{5}\tilde{0}\tilde{2}\tilde{0}\tilde{4}0$ by $H_{8.3}$. For all angles to be positive, we need $\frac{1}{2}\pi<\alpha<\pi$. For the tiles in the ${\mc D}_1$-tiling not to overlap, we need $[1]+[5]<2\pi$. This means $\alpha<\frac{4}{5}\pi$. We always have $a>0$. Within the range $\frac{1}{2}\pi<\alpha<\frac{4}{5}\pi$, $b>0$ means $\frac{4}{5}\pi>\alpha>\frac{1}{2}\pi+\arcsin\frac{1}{4}=104.47751^{\circ}$. Then we find $\tilde{5}\tilde{0}\tilde{2}\tilde{0}\tilde{4}0$ is a real ${\mc D}_1$-tiling for $\alpha$ within this range.

If the ${\mc D}_1$-tiling is extended to a ${\mc D}_2$-tiling, then at the degree $3$ vertices $\bullet$ and $\circ$, we have the angles between two $a$ and between two $b$ that are also angles in the hexagon. Since $a$ and $b$ are separated in $H_{8.3}$, this implies $a=b=1$. Substituting into the formulae for $b$, we get $\cos\alpha=-\frac{1}{2}$. On the other hand, for all angles of $H_{8.3}$ to be positive, we need $\frac{\pi}{2}<\alpha<\pi$. Therefore $\alpha=\frac{2}{3}\pi$, and the hexagon is regular.

\medskip

\noindent{\em Hexagon $H_{8.4}$}

\medskip

In addition to $[4]=[5]=\frac{2}{3}\pi$, we set 
\[
|\bar{1}|=|\bar{3}|=|\bar{4}|=|\bar{5}|=1,\;
|\bar{0}|=|\bar{2}|=a,
\]
\[
[0]=\alpha,\;
[1]=\beta,\;
[2]=\tfrac{4}{3}\pi-\beta,\;
[3]=\tfrac{4}{3}\pi-\alpha.
\]
For all angles to be positive, we need $0<\alpha,\beta<\frac{4}{3}\pi$. 

Substituting the setting into \eqref{hexeq}, we get
\[
2e^{i(\alpha-\frac{2}{3}\pi)}-a=e^{i(\frac{2}{3}\pi-\beta)}.
\]
We rewrite the equation as
\[
2e^{i\alpha'}-a=e^{i\beta'},\quad
\alpha'=\alpha-\tfrac{2}{3}\pi,\quad
\beta'=\tfrac{2}{3}\pi-\beta.
\]
The range $0<\alpha,\beta<\frac{4}{3}\pi$ becomes $-\frac{2}{3}\pi<\alpha',\beta'<\frac{2}{3}\pi$, and the equation is described in Figure \ref{H8F}. The circles have radii $1$ and $2$, indicating the locations of $e^{i\beta'}$ and $2e^{i\alpha'}$. The horizontal lines indicate $2e^{i\alpha'}-a$ for all $a>0$. 

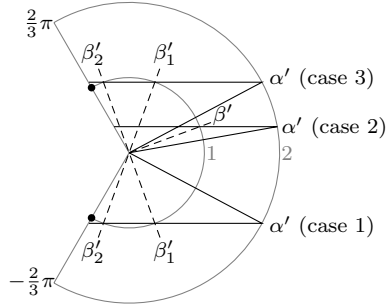
\begin{figure}[htp]
\centering
\begin{tikzpicture}[>=latex,scale=1]

\draw[gray]
	(120:1) arc (120:-120:1)
	(120:2) arc (120:-120:2)
	(-120:2) -- (0,0) -- (120:2);

\draw
	(0,0) -- (10:2) -- ++(-2.17,0)
	(0,0) -- (28:2) -- ++(-2.3,0)
	(0,0) -- (-28:2) -- ++(-2.3,0);

\draw[densely dashed]
	(70:-1.2) -- (70:1.2)
	(110:-1.2) -- (110:1.2)
	(0,0) -- (20.3:1.2);

\fill
	(120:1) circle (0.05)
	(-120:1) circle (0.05);

\node at (125:2.1) {\scriptsize $\frac{2}{3}\pi$};
\node at (-127:2.15) {\scriptsize $-\frac{2}{3}\pi$};

\node at (21:2.75) {\scriptsize $\alpha'$  (case 3)};
\node at (-21:2.75) {\scriptsize $\alpha'$  (case 1)};
\node at (70:1.4) {\scriptsize $\beta'_1$};
\node at (110:1.4) {\scriptsize $\beta'_2$};
\node at (-70:1.4) {\scriptsize $\beta'_1$};
\node at (-110:1.4) {\scriptsize $\beta'_2$};
\node at (7:2.75) {\scriptsize $\alpha'$ (case 2)};
\node at (22:1.35) {\scriptsize $\beta'$};

\node[gray] at (1.1,0) {\scriptsize $1$};
\node[gray] at (2.1,0) {\scriptsize $2$};

\end{tikzpicture}
\caption{Calculation for $H_{8.4}$.}
\label{H8F}
\end{figure}

The solution is the intersection of the horizontal line with the circle of radius $1$. The intersection is not empty if and only if 
\[
-\tfrac{1}{6}\pi\le \alpha'\le \tfrac{1}{6}\pi,\quad
\tfrac{1}{2}\pi\le\alpha\le\tfrac{5}{6}\pi.
\]
Moreover, if the horizontal line is between two $\bullet$, then there is only one solution $\beta'$ within the range. If the horizontal line is not between two $\bullet$, then there are two solutions $\beta_1',\beta_2'$ within the range. 

The transition $\bullet$ between single $\beta$ and double $\beta$ happens when $\beta'=\frac{2}{3}\pi$ or $-\frac{2}{3}\pi$ is a solution. This means $2e^{i\alpha'}-a=e^{\pm i\frac{2}{3}\pi}$. Taking the imaginary part, we get $2\sin \alpha'=\sin(\pm\frac{2}{3}\pi)=\pm\frac{\sqrt{3}}{2}$. Then we get the angles corresponding to the two $\bullet$
\[
\alpha=\tfrac{2}{3}\pi\pm\theta
=94.34109^{\circ},145.65891^{\circ};\quad
\theta=\arcsin\tfrac{\sqrt{3}}{4}=25.65891^{\circ}.
\]

The range $\tfrac{1}{2}\pi\le\alpha\le\tfrac{5}{6}\pi$ is divided by $\tfrac{2}{3}\pi\pm\theta$ into three cases:
\begin{enumerate}
\item $\frac{1}{2}\pi\le\alpha<\tfrac{2}{3}\pi-\theta$: There are two solutions $\beta_1,\beta_2$ satisfying $\pi<\beta_1\le \beta_2<\frac{4}{3}\pi$ and $\beta_1+\beta_2=\frac{7}{3}\pi$. The first and second of Figure \ref{H8D} show the hexagons corresponding to the two solutions. The two solutions merge into one (i.e., two hexagons are the same) for $\alpha=\frac{1}{2}\pi$ and $\beta=\frac{7}{6}\pi$. 
\item $\tfrac{2}{3}\pi-\theta\le\alpha\le \tfrac{2}{3}\pi+\theta$: There is only one solution $\beta$ satisfying $\frac{1}{3}\pi\le\beta\le \pi$. The corresponding hexagon is the third of Figure \ref{H8D}, and is always convex.
\item $\tfrac{2}{3}\pi+\theta<\alpha\le \frac{5}{6}\pi$: There are two solutions $\beta_1,\beta_2$ satisfying $0<\beta_2\le \beta_1<\frac{1}{3}\pi$ and $\beta_1+\beta_2=\frac{1}{3}\pi$. The corresponding hexagons are the horizontal flips of the first and second of Figure \ref{H8D}. The two solutions merge into one for $\alpha=\frac{5}{6}\pi$ and $\beta=\frac{1}{6}\pi$. 
\end{enumerate}
We note that the exchange between $(\alpha,\beta)$ and $(\frac{4}{3}\pi-\alpha,\frac{4}{3}\pi-\beta)$ preserves the hexagon $H_{8.4}$ as a family, and exchanges the first and third cases.

The length $a$ is the distance between the point $\alpha'$ on the circle of radius $2$ and the point $\beta'$ on the circle of radius $1$. We may use $|2e^{i(\alpha-\frac{2}{3}\pi)}-a|=1$ to calculate the formula for $a$ 
\[
a=2\cos(\alpha-\tfrac{2}{3}\pi)\pm\sqrt{4\cos^2(\alpha-\tfrac{2}{3}\pi)-3}.
\]
The smaller $a$, with $-$, corresponds to $\beta_1$. The bigger $a$, with $+$, corresponds to $\beta_2$. In case of single $\beta$, only the smaller $a$ is used.

Figure \ref{H8D} shows $H_{8.4}$ for $\alpha=93^{\circ}$ and $100^{\circ}$. There are two hexagons for $\alpha=93^{\circ}$
\begin{align*}
\alpha &=93^{\circ}, \quad
a =1.36300, \quad
\beta =185.22781^{\circ}; \\
\alpha &=93^{\circ}, \quad
a =2.20102, \quad
\beta =234.77218^{\circ}.
\end{align*}
There is only one hexagon for $\alpha=100^{\circ}$
\[
\alpha=100^{\circ},\quad
a=1.14994, \quad
\beta=163.16018^{\circ}.
\]
All are not Reinhardt hexagons.

\begin{figure}[htp]
\centering
\begin{tikzpicture}[>=latex,scale=1.2]


\foreach \a in {0,1,2}
\draw[gray!50, xshift=3*\a cm]
	(-1,0) -- (1,0);

\draw
	(180:1) -- (240:1) -- (-60:1) -- (0:1) -- ++(147:1.363) -- ++(141.7722:1) -- ++(267:1.363);

\draw[xshift=3cm]
	(180:1) -- (240:1) -- (-60:1) -- (0:1) -- ++(147:2.201) -- ++(92.2289:1) -- ++(267:2.201);
	
\draw[xshift=6cm]
	(180:1) -- (240:1) -- (-60:1) -- (0:1) -- ++(140:1.15) -- ++(156.84:1) -- ++(260:1.15);

\node at (0.45,0.45) {\tiny $a$};
\node at (-1.05,0.6) {\tiny $a$};

\node at (0.85,-0.02) {\tiny $\alpha$};
\node at (-0.2,0.65) {\tiny $\beta$};
\node at (-60:0.8) {\tiny $\frac{2}{3}\pi$};
\node at (240:0.8) {\tiny $\frac{2}{3}\pi$};
\node at (-0.65,0.05) {\tiny $\frac{4}{3}\pi\!-\!\alpha$};
\node[rotate=-60] at (-0.68,0.87) {\tiny $\frac{4}{3}\pi\!-\!\beta$};

\node at (0,-0.3) {\footnotesize$\alpha=93^{\circ}$};
\node at (3,-0.3) {\footnotesize$\alpha=93^{\circ}$};
\node at (6,-0.3) {\footnotesize$\alpha=100^{\circ}$};

\node at (3.1,0.7) {\tiny $a$};
\node at (1.95,1) {\tiny $a$};

\node at (6.6,0.45) {\tiny $a$};
\node at (5,0.6) {\tiny $a$};

\end{tikzpicture}
\caption{$H_{8.4}$, non-indicated edges have length 1.}
\label{H8D}
\end{figure}
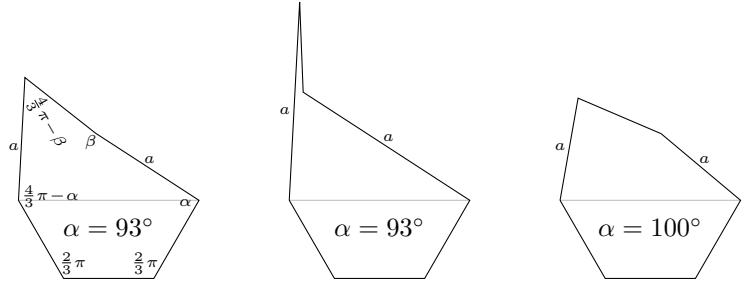

Figure \ref{H8E} describes the 20 ${\mc D}_1$-tilings leading to the hexagon $H_{8.4}$, with $\alpha=100^{\circ}$. We label a first layer tile by the label of the edge of the gray center tile. Moreover, we may assign $\pm$ to the first layer tile by comparing its orientation with the orientation of the center tile.

Since $H_{8.4}$ is symmetric with respect to the horizontal flip, i.e., the exchange between $(\alpha,\beta)$ and $(\frac{4}{3}\pi-\alpha,\frac{4}{3}\pi-\beta)$, we should consider a ${\mc D}_1$-tiling and its horizontal flip as the same tiling. 

The ${\mc D}_1$-tilings can be divided into the upper part {\circled 0} {\circled 1} {\circled 2} and the lower part {\circled 3} {\circled 4} {\circled 5}. The two parts can be independently chosen.

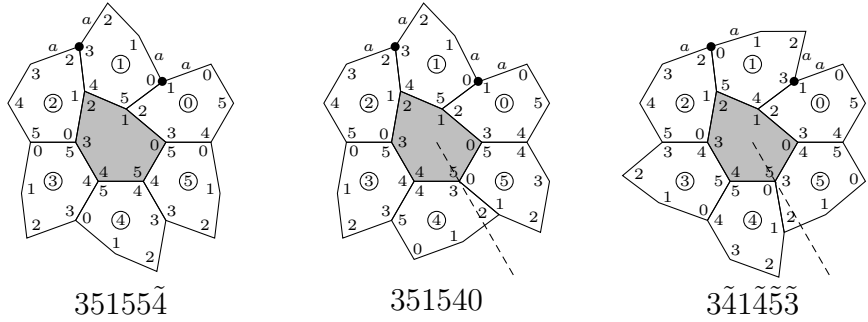
\begin{figure}[htp]
\centering
\begin{tikzpicture}[>=latex,scale=0.6]

\foreach \x in {0,1,2}
\fill[gray!50, xshift=7*\x cm]
	(180:1) -- (240:1) -- (-60:1) -- (0:1) -- ++(140:1.15) -- ++(156.84:1) -- ++(260:1.15);	

\foreach \a/\b/\c/\d in 
{1/0/0/0, 1/-60/-1.5/0.866, 1/60/1.5/0.866, 1/-23.2/0/1.73, 1/120/-1.5/-0.866, 1/180/0/-1.732, -1/60/1.5/-0.866,
1/0/7/0, 1/-60/5.5/0.866, 1/60/8.5/0.866, 1/-23.2/7/1.73, 
1/120/5.5/-0.866, 1/-120/7/-1.732, 1/180/8.5/-0.866,
1/0/14/0, 1/-60/12.5/0.866, 1/60/15.5/0.866, -1/157/14/1.73,
-1/-60/12.5/-0.866, -1/60/14/-1.732, -1/0/15.5/-0.866 }
{
\begin{scope}[shift={(\c cm, \d cm)}, rotate=\b, yscale=\a]

\draw
	(180:1) -- (240:1) -- (-60:1) -- (0:1) -- ++(140:1.15) -- ++(156.84:1) -- ++(260:1.15);
	
\node at (0.75,-0.05) {\tiny 0};
\node at (0.1,0.5) {\tiny 1};
\node at (-0.65,0.8) {\tiny 2};
\node at (-0.75,0) {\tiny 3};
\node at (-120:0.75) {\tiny 4};
\node at (-60:0.75) {\tiny 5};

\end{scope}
}

\foreach \x in {1,2}
\draw[densely dashed]
	(7*\x,0) -- ++(-60:3.5);

\foreach \x in {0,...,5}
\foreach \z in {0,1,2}
{
\node[xshift=4.2*\z cm, inner sep=0.5, draw, shape=circle] at (30+60*\x:1.73) {\tiny \x};
}

\node at (0,-3.5) {$35155\tilde{4}$};
\node at (7,-3.5) {$351540$};
\node at (14,-3.5) {$3\tilde{4}1\tilde{4}\tilde{5}\tilde{3}$};

\foreach \x in {0,1}
{
\begin{scope}[xshift=7*\x cm]

\fill
	(-0.92,2.12) circle (0.1)
	(0.92,1.35) circle (0.1);
	
\node at (-1.5,2.1) {\tiny $a$};
\node at (-0.75,2.7) {\tiny $a$};

\node at (1.45,1.7) {\tiny $a$};
\node at (0.85,1.9) {\tiny $a$};

\end{scope}
}

\begin{scope}[xshift=14 cm]

\fill
	(-0.92,2.12) circle (0.1)
	(0.92,1.35) circle (0.1);
	
\node at (-1.5,2.1) {\tiny $a$};
\node at (-0.4,2.45) {\tiny $a$};

\node at (1.5,1.7) {\tiny $a$};
\node at (1.2,1.9) {\tiny $a$};

\end{scope}
	
\end{tikzpicture}
\caption{$H_{8.4}$, and ${\mc D}_1$-tilings.}
\label{H8E}
\end{figure}

In the upper part, {\circled 0} and {\circled 2} are the same in all the ${\mc D}_1$-tilings, and {\circled 1} has two orientations: $+$ in the first and second of Figure \ref{H8E}, and $-$ in the third  of Figure \ref{H8E}.

The lower part has two possible configurations. The first of Figure \ref{H8E} shows the {\em normal} configuration, where the half regular hexagonal parts of {\circled 3}, {\circled 4}, {\circled 5} are glued to the center tile. In other words, the edge $\bar{4}$ of these tiles are glued to the center tile. Then each of the three tiles can be independently flipped. In other words, their signs can be any combinations of $+$ and $-$. Up to the horizontal flip, we may only consider the sign combination of {\circled 3} and {\circled 5} to be $(+,+),(+,-),(-,-)$.

The second and the third of Figure \ref{H8E} show the {\em skewed} configuration of the lower part, in which two neighboring tiles {\circled 4} and {\circled 5} are glued to the center tile along the edges $\bar{3}$ and $\bar{5}$ instead of $\bar{4}$. The two pairs are related by the flip with respect to the dashed line. We may denote them by their orientations $(+,+)$ and $(-,-)$. 

The remaining tile {\circled 3} is still glued along $\bar{4}$, and can be flipped to give $+$ in the second picture and $-$ in the third picture. Up to the horizontal flip, we may assume the neighboring tiles are {\circled 4} and {\circled 5}, and the remaining tile is {\circled 3}. Then there are total of four combinations of  the lower part.

Table \ref{table8.4A} gives the codes for the upper and lower parts. The whole code for the ${\mc D}_1$-tilings is the code of the upper part followed by the code of the lower part. There are two upper codes and ten lower codes, and we get total of twenty ${\mc D}_1$-tilings.

\begin{table}[htp]
\centering
\begin{tabular}{|c|c|c|c|}
\hline
\multicolumn{4}{|c|}{upper part} \\
\hline \hline
{\circled 0} & {\circled 1} & {\circled 2} & code \\
\hline
$+$ & $+$ & $+$ & $351$ \\
$+$ & $-$ & $+$ & $3\tilde{4}1$ \\
\hline
\end{tabular}
\quad
\begin{tabular}{|c|c|c|c|}
\hline
\multicolumn{4}{|c|}{normal lower part} \\
\hline \hline
{\circled 3} & {\circled 4} & {\circled 5} & code \\
\hline
$+$ & $+$ & $+$ & $555$ \\
$+$ & $-$ & $+$ & $5\tilde{4}5$ \\
$+$ & $+$ & $-$ & $55\tilde{4}$ \\
$+$ & $-$ & $-$ & $5\tilde{4}\tilde{4}$ \\
$-$ & $+$ & $-$ & $\tilde{4}55$ \\
$-$ & $-$ & $-$ & $\tilde{4}\tilde{4}5$ \\
\hline
\end{tabular}
\quad
\begin{tabular}{|c|c|c|c|}
\hline
\multicolumn{4}{|c|}{skewed lower part} \\
\hline \hline
{\circled 3} & {\circled 4} & {\circled 5} & code \\
\hline
$+$ & $+$ & $+$ & $540$ \\
$+$ & $-$ & $-$ & $5\tilde{5}\tilde{3}$ \\
$-$ & $+$ & $+$ & $\tilde{4}40$ \\
$-$ & $-$ & $-$ & $\tilde{4}\tilde{5}\tilde{3}$ \\
\hline
\end{tabular}
\caption{Code for the ${\mc D}_1$-tilings of $H_{8.4}$.}
\label{table8.4A}
\end{table}

Next, we consider the implication of the second layer of tiles. At the degree $3$ vertices $\bullet$ in the upper part, we have the angles between two $a$ that are also angles in the hexagon. Since two $a$ are separated in $H_{8.4}$, this implies $a=1$. In Figure \ref{H8F}, this means $\alpha'=\pi$, which is the same as $\alpha=\frac{2}{3}\pi$. Then by all edges having length $1$, we conclude the hexagon is regular.

Finally, we remark that some ${\mc D}_1$-tiling may not happen due to overlapping among the tiles. Figure \ref{H8G} is the ${\mc D}_1$-tiling $3\tilde{4}1\tilde{4}\tilde{5}\tilde{3}$ for the second of Figure \ref{H8D}, with $\alpha=93^{\circ}$, $
a=2.20102$, and $\beta =234.77218^{\circ}$. The overlapping between {\circled 0} and {\circled 1} happens due to $[1]+[3]=\frac{4}{3}\pi-\alpha+\beta=381.77218^{\circ}>2\pi$. There is no overlapping among the tiles in the single $\beta$ case, because the hexagon is convex. Therefore we only need to consider the first case $\frac{1}{2}\pi\le\alpha<\tfrac{2}{3}\pi-\theta$ and the third case $\tfrac{2}{3}\pi+\theta<\alpha\le \frac{5}{6}\pi$.

\begin{figure}[htp]
\centering
\begin{tikzpicture}[>=latex,scale=0.5]

\foreach \a/\b/\c/\d in {
1/0/0/0, 1/-60/-1.5/0.866, 1/60/1.5/0.866, -1/92/0/1.73,
-1/-60/-1.5/-0.866, -1/60/0/-1.732, -1/0/1.5/-0.866}
\draw[shift={(\c cm, \d cm)}, rotate=\b, yscale=\a]
	(180:1) -- (240:1) -- (-60:1) -- (0:1) -- ++(147:2.201) -- ++(92.2289:1) -- ++(267:2.201);

\end{tikzpicture}
\caption{Overlapping for a ${\mc D}_1$-tiling by $H_{8.4}$.}
\label{H8G}
\end{figure}
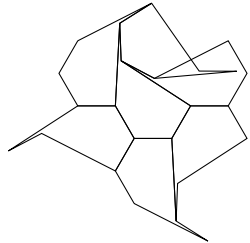

Now we examine all the sum of two angles along the boundary of ${\mc D}_1$-tilings that might be larger than $2\pi$. First, we use $\frac{1}{2}\pi\le \alpha\le \frac{5}{6}\pi$ to get
\begin{align*}
[0]+[0]
&=2\alpha\le\tfrac{5}{3}\pi<2\pi, \\
[3]+[3]
&=\tfrac{8}{3}\pi-2\alpha\le\tfrac{5}{3}\pi<2\pi, \\
[0]+[4]=[0]+[5]
&=\alpha+\tfrac{2}{3}\pi\le\tfrac{3}{2}\pi<2\pi, \\
[3]+[4]=[3]+[5]
&=2\pi-\alpha\le \tfrac{3}{2}\pi<2\pi, \\
[0]+[3]=[1]+[2]=[4]+[4]=[4]+[5]
&=\tfrac{4}{3}\pi<2\pi.
\end{align*}
Then we find that, in all twenty ${\mc D}_1$-tilings, the only two angle combinations on the boundary that are not in the inequalities above are the two $\bullet$ in Figure \ref{H8E}. 

For the upper part $351$ (the first and second of Figure \ref{H8E}), we have the angle sums on the boundary
\[
[0]+[1]=\alpha+\beta,\quad
[2]+[3]=\tfrac{8}{3}\pi-\alpha-\beta.
\]
The requirement $[0]+[1]\le 2\pi$ and $[2]+[3]\le 2\pi$ means $|\alpha'-\beta'|\le \frac{2}{3}\pi$. By Figure \ref{H8F}, this is always true for both $\beta_1'$ and $\beta_2'$.

For the upper part $3\tilde{4}1$ (the third of Figure \ref{H8E}), we have the angle sums on the boundary
\[
[0]+[2]=\tfrac{4}{3}\pi+\alpha-\beta,\quad
[1]+[3]=\tfrac{4}{3}\pi-\alpha+\beta.
\]
The requirement $[0]+[2]\le 2\pi$ and $[1]+[3]\le 2\pi$ means $|\alpha'+\beta'|\le \frac{2}{3}\pi$. By Figure \ref{H8F}, we always have $|\alpha'+\beta_1'|\le \frac{1}{6}\pi+\frac{1}{2}\pi=\frac{2}{3}\pi$, and the equality happens only for the case $\beta_1'$ and $\beta_2'$ become the same. 

Next we study $|\alpha'+\beta_2'|\le \frac{2}{3}\pi$. In case 3, we consider $|\alpha'+\beta_2'|=\alpha'+\beta_2'$ as a continuous function of $\beta_2'\in [\frac{1}{2}\pi,\frac{2}{3}\pi]$. Substituting into the imaginary part of $2e^{i\alpha'}-a=e^{i\beta_2'}$, we get 
\[
\sin\beta_2'=2\sin\alpha'
=2\sin(\tfrac{2}{3}\pi-\beta_2')
=\sqrt{3}\cos\beta_2'+\sin\beta_2'.
\]
The only solution on $[\frac{1}{2}\pi,\frac{2}{3}\pi]$ is $\beta_2'=\frac{1}{2}\pi$. Morever, at the other end $\beta_2'=\frac{2}{3}\pi$ of the interval, we have $\alpha'+\beta_2'>\beta_2'=\frac{2}{3}\pi$. By the intermediate value theorem, this implies $\alpha'+\beta_2'>\frac{2}{3}\pi$ for all $\beta_2'\in (\frac{1}{2}\pi,\frac{2}{3}\pi]$. By similar argument, we also know $|\alpha'+\beta_2'|>\frac{2}{3}\pi$ in case 1.

In summary the hexagon, $H_{8.4}$ has twenty ${\mc D}_1$-tilings $351X$ and $3\tilde{4}1X$ for the choice of smaller $a$, and has only ten ${\mc D}_1$-tilings $351X$ for the choice of bigger $a$.

\subsection{Rank 9}

We carry out the same process for the rank 9 ${\mc D}_1$-tilings, removing those that can imply Reinhardt hexagons. What remain are fourteen hexagons. Since the rank is high enough, we can set one angle to be $\alpha$, and express all the other angles in terms of $\alpha$. We also indicate the range of $\alpha$, such that all angle values are positive. 
\begin{itemize}
\item $H_{9.1}$: 
$243\tilde{3}\tilde{3}\tilde{5}$,
$24\tilde{3}\tilde{4}5\tilde{5}$.

$|\bar{0}|=|\bar{1}|=|\bar{2}|=|\bar{3}|=|\bar{4}|$,

$[0]=[3]=4\alpha-2\pi$,
$[1]=[5]=\alpha$,
$[2]=6\pi-8\alpha$,
$[4]=2\pi-2\alpha$. 
$\frac{1}{2}\pi<\alpha<\frac{3}{4}\pi$.
\item $H_{9.2}$: 
$2\tilde{3}2\tilde{1}2\tilde{5}$,
$\tilde{2}1\tilde{0}\tilde{1}2\tilde{5}$.

$|\bar{0}|=|\bar{1}|=|\bar{2}|=|\bar{3}|=|\bar{4}|$,

$[0]=\alpha$,
$[1]=2\alpha-\frac{2}{3}\pi$,
$[2]=2\pi-2\alpha$,
$[3]=\frac{10}{3}\pi-4\alpha$,
$[4]=4\alpha-2\pi$,
$[5]=\frac{4}{3}\pi-\alpha$. 
$\frac{1}{2}\pi<\alpha<\frac{5}{6}\pi$.
\item $H_{9.3}$: 
$22\tilde{1}\tilde{2}\tilde{3}\tilde{5}$,
$2\tilde{2}\tilde{2}4\tilde{3}\tilde{5}$,
$2\tilde{2}\tilde{2}\tilde{4}5\tilde{5}$.

$|\bar{0}|=|\bar{1}|=|\bar{2}|=|\bar{3}|=|\bar{4}|$,

$[0]=[1]=\alpha$,
$[2]=2\pi-2\alpha$,
$[3]=[5]=4\alpha-2\pi$,
$[4]=6\pi-8\alpha$. $\frac{1}{2}\pi<\alpha<\frac{3}{4}\pi$.
\item $H_{9.4}$: 
$32\tilde{2}43\tilde{5}$.

$|\bar{0}|=|\bar{1}|=|\bar{2}|=|\bar{3}|=|\bar{4}|$,

$[0]=\alpha$,
$[1]=\frac{10}{3}\pi-4\alpha$,
$[2]=2\alpha-\frac{2}{3}\pi$,
$[3]=2\pi-2\alpha$,
$[4]=4\alpha-2\pi$,
$[5]=\frac{4}{3}\pi-\alpha$. 
$\frac{1}{2}\pi<\alpha<\frac{5}{6}\pi$.
\item $H_{9.5}$: 
$\tilde{2}\tilde{3}23\tilde{2}\tilde{5}$,
$\tilde{2}\tilde{3}\tilde{2}\tilde{1}\tilde{2}\tilde{5}$.

$|\bar{0}|=|\bar{1}|=|\bar{2}|=|\bar{3}|=|\bar{4}|$,

$[0]=\alpha$,
$[1]=\frac{10}{3}\pi-4\alpha$,
$[2]=2\pi-2\alpha$,
$[3]=2\alpha-\frac{2}{3}\pi$,
$[4]=4\alpha-2\pi$,
$[5]=\frac{4}{3}\pi-\alpha$. 
$\frac{1}{2}\pi<\alpha<\frac{5}{6}\pi$.
\item $H_{9.6}$: 
$\tilde{4}54\tilde{3}2\tilde{5}$.

$|\bar{0}|=|\bar{1}|=|\bar{2}|=|\bar{3}|=|\bar{4}|$,

$[0]=\alpha$,
$[1]=\frac{14}{3}\pi-6\alpha$,
$[2]=4\alpha-2\pi$,
$[3]=\frac{2}{3}\pi$,
$[4]=2\pi-2\alpha$,
$[5]=3\alpha-\frac{4}{3}\pi$. 
$\frac{1}{2}\pi<\alpha<\frac{7}{9}\pi$.
\item $H_{9.7}$: 
$32140\tilde{5}$,
$3214\tilde{4}5$,
$3214\tilde{4}\tilde{5}$,
$3\tilde{1}140\tilde{5}$,
$3\tilde{1}14\tilde{4}5$,
$3\tilde{1}14\tilde{4}\tilde{5}$.

$|\bar{0}|=|\bar{1}|=|\bar{2}|=|\bar{3}|$,
$|\bar{4}|=|\bar{5}|$,

$[0]=[4]=\alpha$,
$[1]=[2]=[5]=\frac{2}{3}\pi$,
$[3]=2\pi-2\alpha$.
$0<\alpha<\pi$.
\item $H_{9.8}$: 
$\tilde{3}\tilde{0}1\tilde{0}0\tilde{5}$.

$|\bar{0}|=|\bar{1}|=|\bar{2}|=|\bar{3}|$,
$|\bar{4}|=|\bar{5}|$,

$[0]=\alpha$,
$[1]=\frac{4}{3}\pi-\alpha$,
$[2]=2\alpha-\frac{2}{3}\pi$,
$[3]=2\pi-2\alpha$,
$[4]=[5]=\frac{2}{3}\pi$.
$\frac{1}{3}\pi<\alpha<\pi$.

\item $H_{9.9}$: 
$\tilde{2}32024$,
$1\tilde{0}2024$,
$1\tilde{0}20\tilde{1}4$,
$\tilde{2}320\tilde{1}4$,
$\tilde{2}3\tilde{1}024$,
$\tilde{1}\tilde{2}3024$,
$1\tilde{0}\tilde{1}0\tilde{1}4$,
$\tilde{1}1\tilde{0}0\tilde{1}4$.

$|\bar{0}|=|\bar{1}|=|\bar{2}|=|\bar{4}|$,
$|\bar{3}|=|\bar{5}|$,

$[0]=[1]=[2]=[3]=\frac{2}{3}\pi$,
$[4]=\alpha$, 
$[5]=\frac{4}{3}\pi-\alpha$. 
$0<\alpha<\frac{4}{3}\pi$.
\item $H_{9.10}$: 
$\tilde{1}130\tilde{1}\tilde{5}$,
$1\tilde{0}\tilde{1}0\tilde{1}\tilde{5}$.

$|\bar{0}|=|\bar{1}|=|\bar{2}|=|\bar{4}|$,
$|\bar{3}|=|\bar{5}|$,

$[0]=\alpha$,
$[1]=[5]=2\pi-2\alpha$,
$[2]=[4]=4\alpha-2\pi$,
$[3]=4\pi-5\alpha$.
$\frac{1}{2}\pi<\alpha<\frac{4}{5}\pi$.
\item $H_{9.11}$: 
$21\tilde{0}02\tilde{5}$.

$|\bar{0}|=|\bar{1}|=|\bar{2}|=|\bar{4}|$,
$|\bar{3}|=|\bar{5}|$,

$[0]=\alpha$,
$[1]=[5]=\frac{2}{3}\pi$,
$[2]=2\pi-2\alpha$,
$[3]=\frac{4}{3}\pi-\alpha$,
$[4]=2\alpha-\frac{2}{3}\pi$.
$\frac{1}{3}\pi<\alpha<\pi$.
\item $H_{9.12}$: 
$2\tilde{2}\tilde{0}\tilde{3}24$.

$|\bar{0}|=|\bar{1}|=|\bar{2}|=|\bar{4}|$,
$|\bar{3}|=|\bar{5}|$,

$[0]=4\pi-5\alpha$,
$[1]=[4]=2\pi-2\alpha$,
$[2]=[5]=4\alpha-2\pi$,
$[3]=\alpha$.
$\frac{1}{2}\pi<\alpha<\frac{4}{5}\pi$.
\item $H_{9.13}$: 
$32\tilde{2}540$,
$\tilde{0}21540$,
$3\tilde{1}\tilde{2}540$,
$\tilde{0}2\tilde{2}540$,
$\tilde{0}\tilde{1}1540$,
$\tilde{0}\tilde{1}\tilde{2}540$.

$|\bar{0}|=|\bar{1}|=|\bar{2}|$,
$|\bar{3}|=|\bar{4}|=|\bar{5}|$,

$[0]=[3]=\alpha$,
$[1]=[2]=[4]=\frac{2}{3}\pi$,
$[5]=2\pi-2\alpha$.
$0<\alpha<\pi$.
\item $H_{9.14}$: 
$32\tilde{2}54\tilde{3}$,
$3\tilde{2}3\tilde{3}4\tilde{3}$.

$|\bar{0}|=|\bar{1}|=|\bar{2}|$,
$|\bar{3}|=|\bar{4}|=|\bar{5}|$,

$[0]=\alpha$,
$[1]=\frac{10}{3}\pi-4\alpha$,
$[2]=[5]=2\alpha-\frac{2}{3}\pi$,
$[3]=\frac{4}{3}\pi-\alpha$,
$[4]=\frac{2}{3}\pi$.
$\frac{1}{3}\pi<\alpha<\frac{5}{6}\pi$.
\end{itemize}

Similar to calculations for rank 8 ${\mc D}_1$-tilings, we set the edge lengths in the first edge length equality to be $1$, and the remaining edge lengths to be $a$. For examples, the lengths and angles for $H_{9.1}$ and $H_{9.13}$ are 
\begin{itemize}
\item $H_{9.1}$: $|\bar{0}|=|\bar{1}|=|\bar{2}|=|\bar{3}|=|\bar{4}|=1$ and $|\bar{5}|=a$. $[0]=[1]=\alpha$.
\item $H_{9.13}$: $|\bar{0}|=|\bar{1}|=|\bar{2}|=1$ and $|\bar{3}|=|\bar{4}|=|\bar{5}|=a$. $[0]=[3]=\alpha$.
\end{itemize} 
Then we use \eqref{hexeq} to calculate the hexagon. We find that $H_{9.1},H_{9.2},H_{9.3}$ may not be Reinhardt hexagons, and the hexagons in the remaining eleven cases are always regular.

\medskip

\noindent{\em Hexagon $H_{9.1}$}

\medskip

Substituting the setting above for the edges and angles into \eqref{hexeq}, we get
\[
a+e^{i(3\pi-4\alpha)}+e^{i(4\pi-5\alpha)}+e^{i(-\pi+3\alpha)}+e^{i(2\pi-\alpha)}+e^{i(\pi+\alpha)}=0.
\]
Taking the imaginary part, we get
\begin{align*}
0
&=\sin 4\alpha-\sin 5\alpha-\sin 3\alpha-2\sin\alpha \\
&=-2(2\cos\alpha+1)(4\cos^3\alpha-4\cos^2\alpha+1)\sin\alpha.
\end{align*}
By $\frac{1}{2}\pi<\alpha<\frac{3}{4}\pi$, we get two solutions
\[
(a,\cos\alpha,\sin\alpha)
=(1,-\tfrac{1}{2},\tfrac{\sqrt{3}}{2}),\;
(12\zeta^2-6\zeta-3,\zeta,\sqrt{1-\zeta^2}).
\]
where $\zeta=-0.41964$ is the only real number satisfying $4\zeta^3-4\zeta^2+1=0$, and $a=1.63107$. 

The first solution implies $\alpha=\frac{2}{3}\pi$. Substituting into $H_{9.1}$, we find all edges have length $1$ and all angles have value $\frac{2}{3}\pi$. Therefore the hexagon is regular. 

The second solution gives a hexagon that has two ${\mc D}_1$-tilings $243\tilde{3}\tilde{3}\tilde{5}$ and $24\tilde{3}\tilde{4}5\tilde{5}$, in Figure \ref{H9A}. However, if the ${\mc D}_1$-tilings are extended to ${\mc D}_2$-tilings, then the hexagon should fill the third angle at the degree $3$ vertex $\bullet$. This implies that the hexagon has an angle between two $a$, a contradiction.

\begin{figure}[htp]
\centering
\begin{tikzpicture}[>=latex,scale=1]


\begin{scope}[yshift=-0.6cm, scale=1.2]

\draw
	(0,0) -- (1.6313,0) -- ++(80.745:1) -- ++(145.931:1) -- ++(164.441:1) -- ++(245.186:1) -- ++(294.8137:1);

\node at (0.7,-0.136) {\tiny $1.631$};
\node at (1.28,0.14) {\tiny $99.248^{\circ}$};
\node at (0.42,0.14) {\tiny $114.812^{\circ}$};
\node at (1.33,0.9) {\tiny $114.812^{\circ}$};
\node at (0.1,0.92) {\tiny $130.376^{\circ}$};
\node[rotate=-60] at (0.24,1.4) {\tiny $99.248^{\circ}$};
\node[rotate=-25] at (0.9,1.4) {\tiny $161.503^{\circ}$};

\end{scope}

\begin{scope}[xshift=4.3cm, scale=0.5]

\foreach \a/\b/\c/\d in {1/0/0/0, -1/0/0/0, 1/114.8/3.44/-0.23, 1/80.745/2.76/1.25,    1/180/0.97/3.366, -1/130.4/-1.38/0.64, -1/180/-0.416/-0.91}
\draw[shift={(\c cm, \d cm)}, rotate=\b, yscale=\a]
	(0,0) -- (1.6313,0) -- ++(80.745:1) -- ++(145.931:1) -- ++(164.441:1) -- ++(245.186:1) -- ++(294.8137:1);

\fill (2.75,1.25) circle (0.1);

\node at (3.35,0.5) {\tiny $a$};
\node at (3.1,2) {\tiny $a$};
	
\end{scope}

\begin{scope}[xshift=8cm, scale=0.5]

\foreach \a/\b/\c/\d in {1/0/0/0, -1/0/0/0, 1/114.8/3.44/-0.23, 1/80.745/2.76/1.25,    -1/49.5/-0.41/2.73, -1/180/-0.415/0.916, 1/180/-0.415/0.916}
\draw[shift={(\c cm, \d cm)}, rotate=\b, yscale=\a]
	(0,0) -- (1.6313,0) -- ++(80.745:1) -- ++(145.931:1) -- ++(164.441:1) -- ++(245.186:1) -- ++(294.8137:1);

\fill (2.75,1.25) circle (0.1);

\node at (3.35,0.5) {\tiny $a$};
\node at (3.1,2) {\tiny $a$};

\end{scope}

\end{tikzpicture}
\caption{$H_{9.1}$, and two ${\mc D}_1$-tilings.}
\label{H9A}
\end{figure}
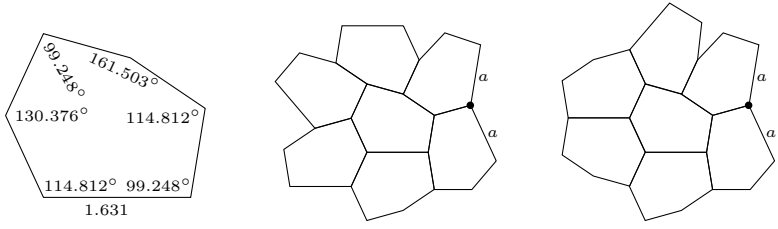

\medskip

\noindent{\em Hexagon $H_{9.2}$}

\medskip

Substituting the setting for the edges and angles into \eqref{hexeq}, we get
\[
a+e^{i(\pi-\alpha)}+e^{i(\frac{8}{3}\pi-3\alpha)}+e^{i(\frac{5}{3}\pi-\alpha)}+e^{i(-\frac{2}{3}\pi+3\alpha)}+e^{i(\frac{7}{3}\pi-\alpha)}=0.
\]
The imaginary part is actually identically $0$, and the real part becomes
\[
a =\cos \alpha
+2\cos (\tfrac{1}{3}\pi+3\alpha)
-\cos (\tfrac{1}{3}\pi+\alpha)
-\cos (\tfrac{1}{3}\pi-\alpha)
=2\cos (\tfrac{1}{3}\pi+3\alpha).
\]
By $\frac{1}{2}\pi<\alpha<\frac{5}{6}\pi$ and $a>0$, we get $\frac{1}{2}\pi<\alpha<\frac{13}{18}\pi$.

Figure \ref{H9B} shows the hexagon $H_{9.2}$ (with $\alpha=110^{\circ}$ and $
a=\sqrt{3}$) and the two ${\mc D}_1$-tilings $2\tilde{3}2\tilde{1}2\tilde{5}$ and $\tilde{2}1\tilde{0}\tilde{1}2\tilde{5}$. The only place that may cause overlapping in the ${\mc D}_1$-tilings is at $\circ$. The sum of the two angles at the vertex is $2(\frac{10}{3}\pi-4\alpha)$, and should be $<2\pi$. Combined with $\frac{1}{2}\pi<\alpha<\frac{13}{18}\pi$, we get the range $\frac{7}{12}\pi<\alpha<\frac{13}{18}\pi$ for the existence of the ${\mc D}_1$-tilings.

The complementary angle at $\bullet$ is $4\pi-5\alpha$. If the ${\mc D}_1$-tilings are extended to ${\mc D}_2$-tilings, then the angle equals one of the six angles of the hexagon. In all the cases, we get $\alpha=\frac{2}{3}\pi$, and the hexagon is regular.

\begin{figure}[htp]
\centering
\begin{tikzpicture}[>=latex,scale=1]


\begin{scope}[yshift=-0.6cm, scale=1.3]

\draw
	(0,0) -- (1.732,0) -- ++(70:1) -- ++(150:1) -- ++(190:1) -- ++(210:1) -- ++(310:1);

\node at (0.9,-0.12) {\tiny $a$};
\node at (0.3,0.13) {\tiny $\frac{4}{3}\pi\!-\!\alpha$};
\node at (1.65,0.1) {\tiny $\alpha$};
\node at (1.65,0.9) {\tiny $2\alpha\!-\!\frac{2}{3}\pi$};
\node at (1.1,1.25) {\tiny $2\pi\!-\!2\alpha$};
\node[rotate=23] at (0.27,1.1) {\tiny $\frac{10}{3}\pi\!-\!4\alpha$};
\node at (-0.2,0.73) {\tiny $4\alpha\!-\!2\pi$};

\end{scope}

\begin{scope}[xshift=4.8cm, scale=0.5]

\foreach \a/\b/\c/\d in {1/0/0/0, -1/0/0/0, 1/-20/-2.272/-0.175, -1/0/-1.849/2.21,   1/220/1.214/3.323, -1/0/1.85/2.205, 1/100/3.36/-0.94}
\draw[shift={(\c cm, \d cm)}, rotate=\b, yscale=\a]
	(0,0) -- (1.732,0) -- ++(70:1) -- ++(150:1) -- ++(190:1) -- ++(210:1) -- ++(310:1);

\fill (-130:1) circle (0.1);

\filldraw[fill=white] (-1.65,0.95) circle (0.1);

\node[rotate=-22] at (-130:1.3) {\tiny $4\pi\!-\!5\alpha$};
	
\end{scope}

\begin{scope}[xshift=9cm, scale=0.5]

\foreach \a/\b/\c/\d in {1/0/0/0, -1/0/0/0, 1/-20/-2.272/-0.175, -1/0/-1.849/2.21,  -1/-100/1.51/3.144, 1/-100/1.51/3.143, -1/-100/3.36/0.94}
\draw[shift={(\c cm, \d cm)}, rotate=\b, yscale=\a]
	(0,0) -- (1.732,0) -- ++(70:1) -- ++(150:1) -- ++(190:1) -- ++(210:1) -- ++(310:1);

\fill (-130:1) circle (0.1);

\filldraw[fill=white] (-1.65,0.95) circle (0.1);

\node[rotate=-22] at (-130:1.3) {\tiny $4\pi\!-\!5\alpha$};

\end{scope}

\end{tikzpicture}
\caption{$H_{9.2}$, and two ${\mc D}_1$-tilings.}
\label{H9B}
\end{figure}

\medskip

\noindent{\em Hexagon $H_{9.3}$}

\medskip

Substituting the setting for the edges and angles into \eqref{hexeq}, we get
\[
a+e^{i(\pi-\alpha)}+e^{i(2\pi-2\alpha)}+e^{i\pi}+e^{i(4\pi-4\alpha)}+e^{i(-\pi+4\alpha)}=0.
\]
Taking the imaginary part, we get
\[
0=\sin\alpha-\sin 2\alpha-2\sin 4\alpha
=-(2\cos\alpha+1)(8\cos^2\alpha-4\cos\alpha-1)\sin\alpha.
\]
By $\frac{1}{2}\pi<\alpha<\frac{3}{4}\pi$, we get two solutions
\[
(a,\cos\alpha,\sin\alpha) 
= (1,-\tfrac{1}{2},\tfrac{\sqrt{3}}{2}),\;
(\tfrac{7}{4},\tfrac{1-\sqrt{3}}{4},\tfrac{\sqrt{12+2\sqrt{3}}}{4}).
\]
Similar to $H_{9.1}$, the first solution implies the regular hexagon. The second solution gives a hexagon that has three ${\mc D}_1$-tilings $22\tilde{1}\tilde{2}\tilde{3}\tilde{5}$, $2\tilde{2}\tilde{2}4\tilde{3}\tilde{5}$, $2\tilde{2}\tilde{2}\tilde{4}5\tilde{5}$, in Figure \ref{H9C}. Unfortunately, the tiles in the three ${\mc D}_1$-tilings overlap at ?. Therefore $H_{9.3}$ actually does not have ${\mc D}_1$-tiling.

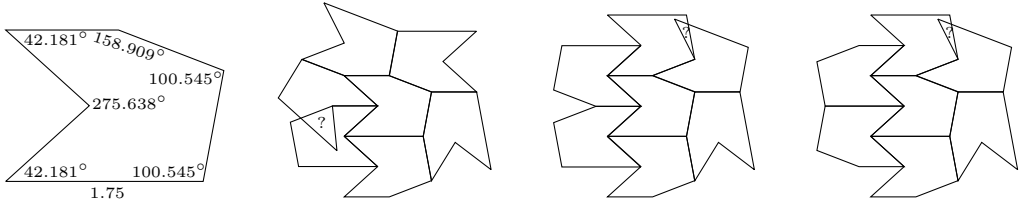
\begin{figure}[htp]
\centering
\begin{tikzpicture}[>=latex]

\begin{scope}[shift={(-4.5cm,-0.6cm)}, scale=1.5]

\draw
	(0,0) -- (1.75,0) -- ++(79.4547:1) -- ++(158.9094:1) -- ++(-1,0) -- ++(317.8188:1) -- ++(222.1817:1);

\node at (0.9,-0.1) {\tiny $1.75$};
\node at (1.45,0.1) {\tiny $100.545^{\circ}$};
\node at (0.45,0.1) {\tiny $42.181^{\circ}$};
\node at (0.45,1.27) {\tiny $42.181^{\circ}$};
\node at (1.1,0.7) {\tiny $275.638^{\circ}$};
\node at (1.6,0.92) {\tiny $100.545^{\circ}$};
\node[rotate=-18] at (1.1,1.2) {\tiny $158.909^{\circ}$};

\end{scope}

\begin{scope}[scale=0.6]

\foreach \a/\b/\c/\d in {1/0/0/0, -1/0/0/0, -1/-21/-0.45/2.96, -1/180/0.74/-0.67, -1/138/-0.16/-0.32, 1/100.545/3.255/-0.74, 1/180/2.93/2.33}
\draw[shift={(\c cm, \d cm)}, rotate=\b, yscale=\a]
	(0,0) -- (1.75,0) -- ++(79.4547:1) -- ++(158.9094:1) -- ++(-1,0) -- ++(317.8188:1) -- ++(222.1817:1);

\node at (-0.5,0.3) {\tiny ?};

\end{scope}

\begin{scope}[xshift=3.5cm, scale=0.6]

\foreach \a/\b/\c/\d in {1/0/0/0, -1/0/0/0, -1/0/0/2.69, 1/180/0.74/2.01, -1/180/0.74/-0.68, 1/100.545/3.255/-0.74, -1/-21/1.486/2.6}
\draw[shift={(\c cm, \d cm)}, rotate=\b, yscale=\a]
	(0,0) -- (1.75,0) -- ++(79.4547:1) -- ++(158.9094:1) -- ++(-1,0) -- ++(317.8188:1) -- ++(222.1817:1);

\node at (1.75,2.3) {\tiny ?};

\end{scope}

\begin{scope}[xshift=7cm, scale=0.6]

\foreach \a/\b/\c/\d in {1/0/0/0, -1/0/0/0, -1/0/0/2.69, 1/180/0.74/0.67, -1/180/0.74/0.67, 1/100.545/3.255/-0.74, -1/-21/1.486/2.6}
\draw[shift={(\c cm, \d cm)}, rotate=\b, yscale=\a]
	(0,0) -- (1.75,0) -- ++(79.4547:1) -- ++(158.9094:1) -- ++(-1,0) -- ++(317.8188:1) -- ++(222.1817:1);

\node at (1.75,2.3) {\tiny ?};

\end{scope}

\end{tikzpicture}
\caption{$H_{9.3}$, and three ${\mc D}_1$-tilings.}
\label{H9C}
\end{figure}

\medskip

\noindent{\em Hexagons $H_{9.4}$ through $H_{9.14}$}

\medskip

First we use the method in $H_{9.i}$, $i=1,2,3$, to find $\alpha$ within the range. The angle $\alpha=\frac{2}{3}\pi$ is always a solution, and corresponds to the regular hexagon. The only other solutions are the following, and we further calculate $a$:
\begin{itemize}
\item $H_{9.9}$: $(\alpha,a)=(\pi-\arctan\frac{1}{\sqrt{11}},-\frac{\sqrt{3}}{\sqrt{11}})$. 
\item $H_{9.12}$: $(\alpha,a)=(\arccos \zeta_1,-0.53740),(\arccos \zeta_2,-3.93543)$, where $\zeta_1<\zeta_2<\zeta_3$ are the three roots of $8\zeta^3-8\zeta-1=0$. 
\end{itemize}
Since $a<0$ in all the cases, the solutions are dismissed. 

\subsection{Rank 10}

We carry out the same process for the rank 10 tilings, removing those that can imply Reinhardt hexagons (including $H_{9.i}$, $i=3,4,\dots,14$). What remain are seven equilateral hexagons
\[
|\bar{0}|=|\bar{1}|=|\bar{2}|=|\bar{3}|=|\bar{4}|=|\bar{5}|.
\]
We omit the above equality in the following list.
\begin{itemize}
\item $H_{10.1}$:
$052\tilde{1}2\tilde{5}$,
$\tilde{0}\tilde{3}2\tilde{1}2\tilde{5}$.

$[5]=\alpha$,
$[0]=\frac{14}{3}\pi-6\alpha$,
$[1]=4\alpha-2\pi$,
$[2]=3\alpha-\frac{4}{3}\pi$,
$[3]=2\pi-2\alpha$,
$[4]=\frac{2}{3}\pi$.
$\frac{1}{2}\pi<\alpha<\frac{7}{9}\pi$.
\item $H_{10.2}$:
$05\tilde{4}52\tilde{5}$,
$05\tilde{4}5\tilde{2}\tilde{5}$.

$[5]=\alpha$,
$[0]=\frac{8}{3}\pi-3\alpha$,
$[1]=[3]=2\pi-2\alpha$,
$[2]= 6\alpha-\frac{10}{3}\pi$,
$[4]=\frac{2}{3}\pi$.
$\frac{5}{9}\pi<\alpha<\frac{8}{9}\pi$.
\item $H_{10.3}$: 
$010503$,
$0105\tilde{0}\tilde{1}$,
$010\tilde{5}\tilde{4}3$,
$\tilde{4}10\tilde{5}45$,
$2\tilde{5}0\tilde{5}4\tilde{5}$,
$\tilde{4}10\tilde{5}\tilde{4}\tilde{3}$,
$\tilde{0}\tilde{5}0\tilde{5}\tilde{4}3$.

$[0]=\alpha$,
$[1]=\frac{10}{3}\pi-4\alpha$,
$[2]=4\alpha-2\pi$,
$[3]=\frac{4}{3}\pi-\alpha$,
$[4]=2\pi-2\alpha$,
$[5]=2\alpha-\frac{2}{3}\pi$. 
$\frac{1}{2}\pi<\alpha<\frac{5}{6}\pi$.
\item $H_{10.4}$:
$\tilde{0}0\tilde{5}0\tilde{4}\tilde{2}$.

$[0]=\alpha$,
$[1]=\frac{10}{3}\pi-4\alpha$,
$[2]=4\alpha-2\pi$,
$[3]=\frac{4}{3}\pi-\alpha$,
$[4]=2\alpha-\frac{2}{3}\pi$,
$[5]=2\pi-2\alpha$.
$\frac{1}{2}\pi<\alpha<\frac{5}{6}\pi$.
\item $H_{10.5}$:
$0\tilde{5}03\tilde{2}3$.

$[1]=\alpha$,
$[0]=\frac{10}{3}\pi-4\alpha$,
$[2]=2\alpha-\frac{2}{3}\pi$,
$[3]=4\alpha-2\pi$,
$[4]=\frac{4}{3}\pi-\alpha$,
$[5]=2\pi-2\alpha$.
$\frac{1}{2}\pi<\alpha<\frac{5}{6}\pi$.
\item $H_{10.6}$: 
$41\tilde{0}1\tilde{0}1$,
$01\tilde{0}1\tilde{0}\tilde{1}$,
$\tilde{4}1\tilde{0}1\tilde{0}1$,
$\tilde{0}\tilde{5}\tilde{0}1\tilde{0}\tilde{1}$.

$[3]=\alpha$,
$[0]=[2]=2\alpha-\frac{2}{3}\pi$,
$[1]=[5]=2\pi-2\alpha$,
$[4]=\frac{4}{3}\pi-\alpha$.
$\frac{1}{3}\pi<\alpha<\pi$.
\item $H_{10.7}$:
$\tilde{0}21\tilde{0}10$.

$[3]=\alpha$,
$[0]=[2]=2\pi-2\alpha$,
$[1]=[5]=2\alpha-\frac{2}{3}\pi$,
$[4]=\frac{4}{3}\pi-\alpha$.
$\frac{1}{3}\pi<\alpha<\pi$.
\end{itemize}

We use \eqref{hexeq} to calculate $\alpha$. For all the cases, the only solution of $\alpha$ within the range is $\alpha=\frac{2}{3}\pi$. This means the hexagon is regular. 

The calculation for $H_{10.1}$ is the most complicated. We  illustrate the calculation using this case. The equation \eqref{hexeq} for the hexagon is
\[
1
+e^{i(\pi-\alpha)}
+e^{i(-\frac{8}{3}\pi+5\alpha)}
+e^{i(\frac{1}{3}\pi+\alpha)}
+e^{i(\frac{8}{3}\pi-2\alpha)}
+e^{i\frac{5}{3}\pi}=0.
\]
In terms of $\theta=\alpha-\frac{2}{3}\pi$ and $z=e^{i\theta}$, the equation is the same as
\begin{align*}
0&=z^7+e^{i\frac{1}{3}\pi}z^3+(e^{-i\frac{2}{3}\pi}-1)z^2+e^{-i\frac{1}{3}\pi}z+e^{-i\frac{2}{3}\pi} \\
&=(z-1)(z-e^{i\frac{4}{3}\pi})(z^5+e^{-i\frac{1}{3}\pi}z^4+z^2+z+e^{i\frac{4}{3}\pi}).
\end{align*}
Since $\frac{1}{2}\pi<\alpha<\frac{7}{9}\pi$, we get $-\frac{1}{6}\pi<\theta<\frac{1}{9}\pi$. The root of the factor $z-1$ is $\theta=0$, which corresponds to the regular hexagon. The root of the factor $z-e^{i\frac{4}{3}\pi}$ is $\theta=\frac{4}{3}\pi$, which is outside the range. For the roots of the quintic factor, we consider
\[
z^{-3}(z^5+e^{-i\frac{1}{3}\pi}z^4+z^2+z+e^{i\frac{4}{3}\pi})
=z^2+z^{-2}+e^{i(-\frac{1}{3}\pi+\theta)}+e^{-i\theta}+e^{i(\frac{4}{3}\pi-3\theta)}.
\]
For $z=e^{i\theta}$, the imaginary part vanishes
\[
0
=\sin(-\tfrac{1}{3}\pi+\theta) 
-\sin\theta
+\sin(\tfrac{4}{3}\pi-3\theta)
=-4\cos 2\theta\sin(\tfrac{1}{3}\pi-\theta). 
\]
The equation has no solution for $-\frac{1}{6}\pi<\theta<\frac{1}{9}\pi$.

\section{Comments}
\label{comment}

The conclusion of Section \ref{proof} may be summarised as the following.

\begin{theorem}\label{d1}
There are seven possibly non-Reinhardt hexagons that have ${\mc D}_1$-tilings. The total number of such ${\mc D}_1$-tilings is 33.
\end{theorem}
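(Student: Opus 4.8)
The plan is to read the statement off the rank-stratified analysis already carried out in Section \ref{proof}, and to verify the bookkeeping of both the hexagon list and the tiling count. Recall that the computer enumeration assigns to each ${\mc D}_1$-tiling ${\mc T}$ the linear system $E({\mc T})$, whose rank lies between $5$ and $10$, and that a tiling is discarded as soon as $\text{rank}\,E({\mc T})=\text{rank}(E({\mc T})\cup\sigma(H_i))$ for some corner symmetry $\sigma$ and some Reinhardt system $H_i$ (where the list of such systems grows to include $H_{5.1},H_{5.2},H_6,H_{7.2}$ once these are themselves shown to be $H_1$ or $H_2$). The first task is therefore to confirm, rank by rank, which surviving systems are genuinely non-Reinhardt, and then to tally their tilings.

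First I would dispose of the low ranks. At ranks $5$ and $6$ the survivors $H_{5.1},H_{5.2},H_6$ are all shown to be $H_1$ by Lemmas \ref{geom1}, \ref{lem52}, and \ref{lem51}, so none is non-Reinhardt. At rank $7$ the survivors are $H_{7.1}$ and $H_{7.2}$; Lemma \ref{geom1} followed by Lemma \ref{lem52} forces $H_{7.2}$ to be $H_1$, leaving $H_{7.1}$ as the first genuinely non-Reinhardt hexagon, carrying its two tilings $213054$ and $2130\tilde{4}4$. At rank $8$ the seven survivors split: the equilateral hexagons $H_{8.5}$, $H_{8.6}$ are $H_1$, $H_2$ by the congruent-quadrilateral argument, and the vanishing of the imaginary part of \eqref{hexeq} collapses $H_{8.7}$ to $H_2$; so exactly four non-Reinhardt types $H_{8.1},H_{8.2},H_{8.3},H_{8.4}$ remain, contributing $5$, $1$, $1$, and $20$ tilings respectively.

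The heart of the verification is ranks $9$ and $10$. Here I would use \eqref{hexeq} directly: setting the first length equal to $1$ and the rest to $a$, and expressing every angle through a single $\alpha$, the vanishing of the imaginary (or real) part yields a trigonometric equation, factorable into a polynomial in $\cos\alpha$, whose roots in the admissible $\alpha$-interval must be examined one by one. The uniform root $\alpha=\tfrac{2}{3}\pi$ always reappears and forces the regular hexagon; I would then test whether any additional admissible root also satisfies $a>0$. This leaves only $H_{9.1}$ and $H_{9.2}$ as non-Reinhardt (two tilings each), while $H_{9.3}$'s extra root is dismissed because its three candidate tilings self-overlap, $H_{9.4}$ through $H_{9.14}$ reduce to the regular hexagon, and every $H_{10.1}$ through $H_{10.7}$ is forced regular by \eqref{hexeq}. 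Summing the surviving tilings gives $2+5+1+1+20+2+2=33$, and the seven surviving hexagons are exactly $H_{7.1},H_{8.1},H_{8.2},H_{8.3},H_{8.4},H_{9.1},H_{9.2}$.

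The main obstacle I anticipate is precisely the rank $9$–$10$ root analysis: distinguishing a genuine non-Reinhardt root (as in $H_{9.1},H_{9.2}$) from one that is either out of range, gives $a\le 0$ (as in $H_{9.9},H_{9.12}$), or yields a legitimate hexagon whose ${\mc D}_1$-tiling nevertheless self-overlaps (as in $H_{9.3}$). Getting this overlap bookkeeping right—deciding for each candidate whether the six surrounding tiles actually close up around the center without intersecting—is where the care is needed, since only the configurations that genuinely realize in the plane may be counted toward the total of $33$.
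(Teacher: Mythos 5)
Your proposal is correct and follows essentially the same route as the paper: Theorem \ref{d1} is stated there as a summary of Section \ref{proof}, and your rank-by-rank verification (Lemmas \ref{geom1}, \ref{lem52}, \ref{lem51} at ranks 5--7, the congruent-quadrilateral and \eqref{hexeq} arguments at rank 8, and the root/positivity/overlap analysis at ranks 9--10) reproduces exactly the paper's elimination scheme. Your tally $2+5+1+1+20+2+2=33$ over $H_{7.1},H_{8.1},H_{8.2},H_{8.3},H_{8.4},H_{9.1},H_{9.2}$ matches the paper's count, including the subtle points (the $H_{8.4}$ count of twenty codes, the dismissal of $H_{9.3}$ by self-overlap, and of the extra roots of $H_{9.9},H_{9.12}$ by $a<0$).
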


The seven hexagons are $H_{7.1}$, $H_{8.1}$, $H_{8.2}$, $H_{8.3}$, $H_{8.4}$, $H_{9.1}$, $H_{9.2}$. In Section \ref{proof}, we actually proved that, if any of the 33 ${\mc D}_1$-tilings can be extended to ${\mc D}_2$-tiling, then the corresponding hexagon becomes regular. The proof uses the geometrical implication of the ${\mc D}_2$-tilings on the boundary configurations of ${\mc D}_1$-tilings. 

There is a more combinatorial proof. The extension of a ${\mc D}_1$-tiling to a ${\mc D}_2$-tiling is the same as picking one ${\mc D}_1$-tiling around each of the six non-central tiles (labeled 2 through 7 in Figure \ref{geom3}) in the ${\mc D}_1$-tiling, such that six ${\mc D}_1$-tilings are compatible. For the proof of the main theorem, we may restrict all these ${\mc D}_1$-tilings to be among the 33 (and their circular permutations and flippings). This can be carried out by computer program. We find that it is impossible to have a ${\mc D}_2$-tiling, such that all seven ${\mc D}_1$-tilings in the ${\mc D}_2$-tiling are among the 33. In other words, in any ${\mc D}_2$-tiling, at least one of the seven ${\mc D}_1$-tilings is not among the 33. By Theorem \ref{d1}, this means that this ${\mc D}_1$-tiling implies the hexagon is a Reinhardt hexagon.

In \cite{heesch}, Heesch found a pentagon that has a one layer tiling but not a two layer tiling. Specifically, we have a more general notion of ${\mc D}_k$-tiling, by inductively define general ${\mc D}_{k+1}$-tiling to consist of all tiles touching the general ${\mc D}_k$-tiling. This includes all the tiles in the general ${\mc D}_k$-tiling, and all tiles sharing common edges with the general ${\mc D}_k$-tiling, and even the tiles sharing common points with the general ${\mc D}_k$-tiling. Moreover, the tilings are not required to be edge-to-edge, and there is no three tile requirement. A polygon (or more generally, a topological disk) has {\em Heesch number} $k$ if it has general ${\mc D}_k$-tiling but not general ${\mc D}_{k+1}$-tiling. 

A polygon that can tile the plane has Heesch number $\infty$. The example in \cite{heesch} has Heesch number 1. The largest known Heesch number is 6 \cite{basic}.

The seven hexagons $H_{7.1}$, $H_{8.1}$, $H_{8.2}$, $H_{8.3}$, $H_{8.4}$, $H_{9.1}$, $H_{9.2}$ obtained in this paper have ${\mc D}_1$-tilings but not ${\mc D}_2$-tilings, as long as they are not Reinhardt hexagons. However, the ${\mc D}_k$-tilings are restricted to edge-to-edge, and all vertices haing degree $3$. We may regard the hexagons to have {\em special} Heesch number 1. The general problem of the maximal Heesch number for hexagons remains open.

\end{document}